\documentclass[11pt,a4paper,reqno]{amsart}

\usepackage[T1]{fontenc}
\usepackage[utf8]{inputenc}
\usepackage{lmodern}
\usepackage{microtype}
\usepackage[
  a4paper,
  textwidth=15.2cm,
  textheight=23.2cm,
  centering,
  headheight=14pt
]{geometry}
\usepackage{mathrsfs}
\usepackage{amsmath,amssymb,amsthm}

\usepackage[
  colorlinks=true,
  linkcolor=blue,
  citecolor=magenta,
  urlcolor=blue
]{hyperref}

\newcommand{\EE}{{\mathcal  E}}
\newcommand{\FF}{{\mathcal  F}}

\def\XXint#1#2#3{{\setbox0=\hbox{$#1{#2#3}{\int}$ }
\vcenter{\hbox{$#2#3$ }}\kern-.6\wd0}}

\newtheorem{theorem}{\bf Theorem}[section]
\newtheorem{proposition}[theorem]{\bf Proposition}
\newtheorem{lemma}[theorem]{\bf Lemma}
\newtheorem{corollary}[theorem]{\bf Corollary}

\theoremstyle{definition}
\newtheorem{definition}[theorem]{Definition}

\newtheorem{remark}[theorem]{Remark}

\numberwithin{equation}{section}

\begin{document}

\title[Semilinear equations for stable L\'evy operators]
{Isolated Singularities and Measure Data Problems\\
for Semilinear Equations Driven by Stable L\'evy Operators}

\author[K. Dunst]{Kamil Dunst}
\author[T. Klimsiak]{Tomasz Klimsiak}

\address{
Faculty of Mathematics and Computer Science,
Nicolaus Copernicus University in Toru\'n,
Chopina 12/18,
87-100 Toru\'n,
Poland
}

\email{kdunst@doktorant.umk.pl}
\email{tomas@mat.umk.pl}

\subjclass[2020]{
Primary 35R11, 35B40;
Secondary 35J61, 35J75, 60J45, 31C25}

\keywords{
semilinear nonlocal equation,
isolated singularity,
stable L\'evy operator,
measure data,
Green function,
critical parameter}

\begin{abstract}
We investigate positive solutions of semilinear equations driven by uniformly elliptic strictly $2s$-stable 
L\'evy  operators, where $s\in (0,1)$.
We first prove that every positive distributional solution of $-Lu=u^p$ in a punctured domain $D\setminus\{0\}$ satisfies
 $-Lu=u^p+k\delta_0$ in $D$ for some $k\ge0$, and that necessarily $k=0$ whenever $p\ge d/(d-2s)$. 
We then study the corresponding Dirichlet problem in which the Dirac mass is replaced by a bounded positive measure, 
and establish the existence of a critical parameter $k_\mu$: below this threshold minimal positive solutions exist, whereas above it the problem admits no solution. 
In the symmetric case, we further prove multiplicity below the threshold, as well as existence and uniqueness at the threshold itself.
\end{abstract}

\maketitle

\section{Introduction}
\label{sec1}

Semilinear equations with isolated singularities and measure-valued
sources provide a natural setting in which nonlinear effects, the
singular behaviour of Green functions, and critical integrability
interact. While these problems have been extensively studied for the
Laplacian and the fractional Laplacian, substantially less is known for
anisotropic and non-symmetric stable generators.

Let $D\subset\mathbb R^d$ be a bounded open set, with $d\ge 1$, and assume that $0\in D$. 
The aim of the present paper is to study, for $p>1$, the existence and qualitative properties of positive solutions to two interconnected problems:
the isolated-singularity problem
\begin{equation}
\label{eq1.0}
-Lu=u^p\quad\text{in }\mathscr D'(D\setminus\{0\}),
\end{equation}
and the parameter-dependent exterior Dirichlet problem
\begin{equation}
\label{eq1.1}
\begin{cases}
-Lu=u^p+k\mu,\quad &\text{in}\quad D, \\
u=0,\quad &\text{in}\quad\mathbb{R}^d\setminus D,
\end{cases}
\end{equation}
where $k>0$ and $\mu$ is a non-zero bounded  positive Borel measure on $D$.
The operator $L$ is the generator of a strictly $2s$-stable L\'evy process, where $s\in(0,1)$ and $2s<d$.
On $C_c^2(\mathbb R^d)$ it has the form
\begin{equation}
\label{L}
Lu(x)=b\cdot\nabla u(x)+\int_{\mathbb R^d}\bigl(u(x+y)-u(x)-y\cdot\nabla u(x)\mathbf 1_{B(0,1)}(y)\bigr)\,\nu(dy),
\end{equation}
where $b\in\mathbb R^d$ and $\nu$ is a L\'evy measure, not necessarily symmetric. Strict $2s$-stability means that
\begin{equation}
\label{scaling_def}
Lu_a(x)=a^{2s}Lu(ax),
\qquad a>0,\quad u\in C_c^2(\mathbb R^d),
\end{equation}
with $u_a(x):=u(ax)$. In particular, the L\'evy measure admits the polar representation
\begin{equation}
\label{eq.specme}
\nu(B)=\int_{\mathbb S^{d-1}}\lambda(d\theta)\int_0^\infty \mathbf 1_B(r\theta)\frac{dr}{r^{1+2s}},
\qquad B\in\mathcal B(\mathbb R^d),
\end{equation}
for a finite positive Borel measure $\lambda$ on $\mathbb S^{d-1}$, called the spectral measure; 
see \cite[Theorem~14.3]{Sato}. We impose the uniform non-degeneracy condition
\begin{equation}
\label{ellipticity}
\inf_{\xi\in\mathbb S^{d-1}}
\int_{\mathbb S^{d-1}}|\xi\cdot\theta|^{2s}\,\lambda(d\theta)\ge \Lambda
\end{equation}
for some $\Lambda>0$. Since $\lambda$ is finite, this is equivalent to two-sided comparability of the modulus 
of the symbol of $-L$ with $|\xi|^{2s}$. When $s\in(0,1/2)$, we additionally assume
\[
\operatorname{Cone}\bigl(\operatorname{Conv}(\operatorname{supp}\lambda)\bigr)=\mathbb R^d.
\]
This excludes one-sided processes and guarantees the strict positivity of the full-space Green function. 
In the symmetric case, it follows automatically from \eqref{ellipticity}.
Our results show that the principal removability and threshold phenomena persist under these broad structural assumptions.

\subsection{Main results}

The first part of the paper is devoted to positive distributional solutions of \eqref{eq1.0} and belongs to 
the broader theory of isolated singularities for semilinear equations. A preliminary difficulty is specific to the 
nonlocal setting: local integrability of \(u\) in \(D\setminus\{0\}\) does not by itself ensure that \(Lu\) is well defined as a 
distribution there, since the action of \(L\) also depends on the values of \(u\) outside the support of the test function. 
To accommodate this nonlocal dependence, Definition~\ref{def.space1} introduces, for every bounded open set 
\(V\subset\mathbb R^d\), the tail space \(\mathscr T_\nu(V)\) associated with the L\'evy measure \(\nu\).
This space satisfies
\[
L^1(\mathbb R^d)\subset \mathscr T_\nu(V)
\subset L^1_{\mathrm{loc}}(V)
\]
and is chosen so that, for every $u\in\mathscr T_\nu(V)$, the distribution $Lu\in\mathscr D'(V)$ is well defined.
We prove in Theorem~\ref{th.main1} that every positive solution
\[
u\in \mathscr T_\nu(D\setminus\{0\})\cap L^p_{\mathrm{loc}}(D\setminus\{0\})
\]
of \eqref{eq1.0} automatically belongs to
\[
u\in \mathscr T_\nu(D)\cap L^p_{\mathrm{loc}}(D)
\]
and satisfies
\begin{equation}
\label{eq-firstpl}
-Lu=u^p+k\delta_0\quad\text{in }\mathscr D'(D)
\end{equation}
for a  constant $k\ge0$. Thus the singularity at the origin can contribute only a non-negative multiple of the Dirac mass. Moreover,
\[
p\ge \frac{d}{d-2s}
\quad\Longrightarrow\quad
k=0.
\]
The exponent $d/(d-2s)$ is therefore the removability threshold for the singular measure generated by a positive distributional solution.
The proof also yields a Riesz-type decomposition. More precisely, Corollary~\ref{rem.ineq} shows that, almost everywhere in $D$,
\[
u(x)=h(x)+\int_DG_D(x,z)u^p(z)\,dz+kG_D(x,0)+\int_{D^c}u(z)
\left(
\int_{z-D}G_D(x,z-\xi)\,\nu(d\xi)
\right)dz,
\]
where $h$ is harmonic with respect to the Dirichlet realization of $L$ on $D$, and  $G_D$ is the Green function for $L$ and $D$. 
As a corollary (see Corollary \ref{col.clhol}), every classical   solution to the exterior Dirichlet   problem 
\begin{equation}
\label{eq1.1class}
\begin{cases}
-Lu=u^p,\quad &\text{in}\quad D\setminus\{0\}, \\
u=0,\quad &\text{in}\quad\mathbb{R}^d\setminus D,
\end{cases}
\end{equation}
belongs to  $L^p(D)$, and
\begin{equation}
\label{eq.repfor23intr}
u(x)=\int_DG_D(x,y)u^p(y)\,dy+kG_D(x,0),\quad x\in D.
\end{equation} 
In the subcritical range $p\in(1,\frac{d}{d-2s})$, one of the following alternatives holds.
If $k=0$ in \eqref{eq.repfor23intr}, then $u$ admits a locally H\"older continuous extension to $D$. If $k>0$  in \eqref{eq.repfor23intr}, 
then 
\[
kG_D(x,0)\le u(x)\le c kG_D(x,0),\quad x \in B_\varepsilon(0)
\]
for some  $c>1$ and $\varepsilon>0$.
In particular, in the present generality one should not expect the isotropic asymptotic behaviour
\[
u(x)\sim |x|^{2s-d}
\qquad\text{as }x\to0.
\]
Rather, the singular profile is governed by $G_D(x,0)$, whose behaviour may depend strongly on the direction of 
approach and on the spectral measure of the underlying stable process.
A finer directional analysis of this singular behaviour lies beyond the
scope of the present paper.
We also note a further phenomenon that does not arise for the fractional Laplacian. If the function 
$x\mapsto G_D(x,0)$ fails to be locally bounded in $D\setminus{\{0\}}$, then no classical solution in $D\setminus\{0\}$
can satisfy \eqref{eq.repfor23intr} with $k>0$. 

These results naturally lead to the solvability problem for \eqref{eq1.1}, where $p\in\left(1,\frac{d}{d-2s}\right)$
and $\mu$ is a non-zero bounded positive Borel measure on $D$. 
As the basic definition of a solution to problem \eqref{eq1.1}, we adopt the following integral formulation: 
a function $u\in L^p(D)$ is a solution if, for a.e. $x\in D$,
\begin{equation}
\label{eq.for.int.int}
u(x)=\int_D G_D(x,y)u^p(y)\,dy+k\int_D G_D(x,y)\,\mu(dy).
\end{equation}
No regularity assumption is imposed on the boundary of \(D\). We require only that the Dirichlet Green function \(G_D\) 
be strictly positive; this condition holds, for instance, when \(D\) is convex, as observed in Remark~\ref{rem.any1}. A general and 
readily verifiable sufficient criterion is established in Proposition~\ref{prop.com1} of the appendix.
We prove (see Theorem \ref{k-star-sol}, Proposition \ref{prop.uncrit}) that there exists a critical value $k_\mu\in(0,\infty)$ such that:
\begin{itemize}
\item for every $k\in(0,k_\mu)$, problem \eqref{eq1.1} admits a minimal positive solution $u_k$;
\item for $k=k_\mu$, problem \eqref{eq1.1} admits at most one positive solution;
\item for every $k>k_\mu$, problem \eqref{eq1.1} admits no positive solution.
\end{itemize}
If, in addition, $L$ is symmetric, then the minimal solution is stable (see Theorem \ref{stability}) 
and, for every $k\in(0,k_\mu)$, there exists at least one further positive solution
$v_k>u_k$ (see Theorem \ref{mountain-pass-sol}).  Moreover, a solution exists at $k=k_\mu$ 
and is therefore unique (see Theorem \ref{th.critex1}).

\subsection{Novelty and main difficulties}
\label{subsec-main-difficulties}

For the Laplacian, isolated singularities of positive solutions, removability, and the emergence of a 
Dirac mass in the distributional equation were developed in the classical works of Lions \cite{lit:Lions1980}, 
Brezis--V\'eron \cite{lit:BrezisVeron1980}, Gidas--Spruck \cite{lit:GidasSpruck1981}, 
Caffarelli--Gidas--Spruck \cite{lit:CaffarelliGidasSpruck1989}, 
and Baras--Pierre \cite{lit:BarasPierre1984}. Related existence, non-existence, and multiplicity questions for equations 
with singular or measure-valued data were studied, among others, in \cite{lit:AmannQuittner1998,lit:FerreroSaccon2006,lit:NaitoSato2007}. 
For more general local uniformly elliptic operators, analogous problems have been treated through capacities, 
Green potentials, and boundary potential theory; see, for instance, \cite{lit:HirataOno2014,lit:Veron1996,lit:MarcusVeron2014}.

In the nonlocal setting,  the best understood  model is $L=-(-\Delta)^s$. 
The existing analysis of isolated singularities for semilinear equations
driven by the fractional Laplacian follows, broadly speaking, two main
approaches. The first is based on the Caffarelli--Silvestre extension, which
realizes $(-\Delta)^s$ as a Dirichlet-to-Neumann operator for a local
degenerate elliptic equation in one additional variable. This representation
makes available local PDE techniques; see, for
instance,
\cite{lit:CaffarelliJinSireXiong2014,lit:DeLaTorreGonzalez2018,
lit:YangZou2021}.
In its standard form, however, this method is specific to the fractional
Laplacian and closely related symmetric operators. It does not yield a
suitable local extension problem for the general anisotropic and possibly
non-symmetric stable generators considered here.

The second approach works directly with Green operators, distributional
methods, maximum principles, and nonlocal potential theory. It has been used
both for semilinear nonlocal equations with measure data or source terms
\cite{lit:ChenVeron2014,GV,lit:HuynhNguyen2023,KR:CM}
and for equations with isolated singularities
\cite{lit:ChenQuaas2018,LiLiuWuXu2020,LiWuXu2018}.
Among the contributions belonging to this approach, the work of Chen and Quaas \cite{lit:ChenQuaas2018}
is most closely related to the two problems considered here. It combines a B\^ocher-type theorem for a semilinear equation
in a punctured domain with a critical-parameter problem involving a Dirac source, 
in the spirit of the pioneering work of Lions \cite{lit:Lions1980}. 
Although the Green-operator approach avoids the extension technique, the existing arguments continue to rely substantially 
on analytic properties specific to the fractional Laplacian, which sharply restricts their applicability to more general stable generators.
More specifically, the following properties, or closely related variants thereof, are
extensively exploited.
\begin{enumerate}
\item[\rm (i)]
\emph{The explicit formula for full-space Green function  and its smoothness off the diagonal.}
\item[\rm (ii)]
\emph{Sharp two-sided pointwise
estimates for the Dirichlet Green function.}
In sufficiently regular domains, one has an estimate of the form
\begin{equation}
\label{green.dircom}
G_D(x,y)
\asymp
G(x,y)
\left(
1\wedge\frac{\delta_D(x)}{|x-y|}
\right)^s
\left(
1\wedge\frac{\delta_D(y)}{|x-y|}
\right)^s,
\end{equation}
where $\delta_D(x):=\operatorname{dist}(x,D^c)$
(see \cite{ChenHuZhao2025,DipierroRosOtonSerraValdinoci2022}).  
\item[\rm (iii)]
\emph{Pointwise estimates for spatial derivatives of the Dirichlet Green
function.}
In particular, the decomposition
\[
G_D(x,y)=G(x,y)-g(x,y)
\]
can be combined with estimates for the derivatives of the regular part,
such as
\[
\left|\partial_{x_i}g(x,0)\right|
\leq C\delta_D(x)^{s-1}
\]
(see \cite{RosOtonSerra2016,Sztonyk2010v1}).
\item[\rm (iv)]
\emph{Smoothness of the density of the jump measure  away from the origin.}
For the fractional Laplacian its density belongs to $C^\infty(\mathbb R^d\setminus\{0\})$, and, 
 for every $\varepsilon>0$, all its first derivatives are
bounded on $\mathbb R^d\setminus B(0,\varepsilon)$.
\item[\rm (v)]
\emph{Harnack inequalities and related comparison principles} (see \cite{BogdanSztonyk2007,DipierroRosOtonSerraValdinoci2022,RosOtonSerra2016}).
\item[\rm (vi)]
\emph{Schauder estimates and boundary regularity results in the form required by the classical-solution approach} (see \cite{RosOtonSerra2016,DipierroRosOtonSerraValdinoci2022}).
\item[\rm (vii)]
\emph{Quasi-metric structure of the Green kernel.}
More precisely, in many of the relevant settings either
\[
d_G(x,y):=\frac{1}{G_D(x,y)}
\]
satisfies a quasi-triangle inequality, or $G_D$ becomes a quasi-metric
kernel after multiplication by a suitable boundary modifier (see \cite{GV,lit:HuynhNguyen2023}).
\end{enumerate}
In particular, Chen and Quaas \cite{lit:ChenQuaas2018} make essential use of properties (i)--(iv). Schauder estimates and boundary regularity of $D$ 
also play an important role there, since their analysis is carried out for classical solutions in $D\setminus{\{0\}}$
that vanish on $D^c$. This creates an additional difficulty in our setting, where we consider 
distributional solutions and 
impose only a tail-integrability condition on the exterior values of the solution, encoded by the spaces
$\mathscr T_\nu(D)$.

None of the properties listed above is available  under the general assumptions imposed 
in the present paper. Accordingly, the novelty of our contribution lies not only in the results themselves 
but also - and perhaps primarily - in the methods developed to establish them. Our analysis combines analytic and 
probabilistic potential theory with operator-theoretic techniques and is formulated systematically in terms of the killed resolvent. 
The guiding principle is to derive the properties of
the Green operator that are needed in the nonlinear analysis from the limited structural information available for the L\'evy operator, 
namely its scaling property and the two-sided comparability of its Fourier symbol with
$|\xi|^{2s}$.

Several stages of the analysis require a substantial departure from the methods employed in the existing literature. 
This is particularly apparent in the treatment of the isolated-singularity problem, where the argument is based on Dynkin's 
formula and the Riesz decomposition theorem and therefore draws on fundamental results of probabilistic potential theory.
A second methodological departure arises in the construction of solutions to the measure-data problem. 
In the fractional-Laplacian setting, monotone iteration is commonly controlled by an explicit supersolution derived from a 
pointwise nonlinear potential estimate such as 
\[ 
G_D\bigl[(G_D\mu)^p\bigr]\leq C G_D\mu. 
\] 
Under the present assumptions, such an estimate cannot be obtained directly in the absence of pointwise 
bounds for the Green function. Instead, we combine Deny's theorem (see Theorem \ref{th-denys}) with probabilistic and operator-theoretic arguments 
to establish compactness of the nonlinear Green operator. Schauder's fixed-point theorem then yields a solution 
for a sufficiently small parameter, which subsequently serves as a barrier in the monotone iteration scheme.
The proofs of other results retain the general architecture of earlier
arguments but require substantial modifications. 

The section devoted to the critical parameter (Section \ref{sec9}) addresses the existence of a solution for \(k=k_\mu\). 
Since the minimal solutions \(u_k\) increase monotonically as \(k\uparrow k_\mu\), the monotone convergence theorem permits 
passage to the limit in the integral formulation \eqref{eq.for.int.int}. The essential difficulty is to prove that the resulting extremal function 
\[
u^*:=\sup_{k<k_\mu}u_k 
\]
belongs to \(L^p(D)\), as required by the definition of a solution. 
Global \(L^p(D)\)-integrability of the extremal solution is not established explicitly even 
in the fractional-Laplacian analysis of Chen and Quaas. 
To overcome this difficulty, we adapt, among other ingredients, several ideas introduced by Ferrero and Saccon \cite{lit:FerreroSaccon2006} 
in their study of elliptic equations with source nonlinearities and measure data.

\subsection{Organization of the paper}

Section~\ref{sec2} introduces the notation and recalls the analytic and probabilistic background on strictly 
stable L\'evy generators, their killed processes, and Green functions. Section~\ref{sec3} develops the properties of 
the Green operator used throughout the paper. In Section~\ref{sec4}, we recall Deny's theorem in the required form and 
prove compactness results for Green-potential operators. Section~\ref{sec5} introduces the distributional and classical 
notions of solution adapted to the nonlocal setting. Section~\ref{sec6} is devoted to isolated singularities and proves 
the removability and asymptotic results described above. Section~\ref{sec7} studies the Dirichlet problem with measure data, 
constructs the minimal branch and the critical parameter, and proves that there is at most one solution at the threshold. 
In Section~\ref{sec8}, under the symmetry assumption, we prove stability of the minimal branch and multiplicity 
below the threshold. Section~\ref{sec9} establishes existence, and hence uniqueness, of the critical solution in the symmetric case. 
The appendices collect supplementary results concerning the Fourier symbol, fine topology, and positivity and comparison 
properties of Green functions.

\section{Preliminaries}
\label{sec2}
Let $\ell^d$ denote the Lebesgue measure on $\mathbb{R}^d$.  Throughout the paper $D$ denotes an open bounded subset of $\mathbb R^d$.
We write $\mathcal{B}(\mathbb{R}^d)$ for the Borel $\sigma$-algebra of subsets of $\mathbb{R}^d$.
For any $V \in \mathcal{B}(\mathbb{R}^d)$, we denote by $\mathscr{B}(V)$ the family of all numerical Borel measurable functions 
\[
f : V \to \overline{\mathbb{R}} := \mathbb{R} \cup \{+\infty\} \cup \{-\infty\}.
\]
For $r > 0$ and $x \in \mathbb{R}^d$, let
\[
B_r(x) := \{y \in \mathbb{R}^d : |x-y| < r\},
\]
where $|\cdot|$ denotes the Euclidean norm, and write $B_r := B_r(0)$.  
For open sets $V \subset \mathbb{R}^d$, let $C_c^\infty(V)$ denote the space of 
smooth functions with compact support in $V$, 
and set $\mathscr{D}(V) := C_c^\infty(V)$.  
Let $C_0(\mathbb{R}^d)$ be the space of continuous functions on $\mathbb{R}^d$ 
vanishing at infinity, equipped with the supremum norm
\[
\|u\|_\infty := \sup_{x \in \mathbb{R}^d} |u(x)|, \qquad u \in C_0(\mathbb{R}^d).
\]
For a given class of numerical functions $\mathscr{A}$, we denote by $p\mathscr{A}$
 the subclass of $\mathscr{A}$ consisting of positive functions, i.e. $[0,+\infty]$-valued functions.  
 Throughout the paper, $j_\varepsilon$ denotes the Friedrichs mollifier.

We let    $d_s:=d/(d-2s)$. 
This is the critical  
 Hardy--Littlewood--Sobolev exponent; that is,
for Riesz potentials
$$
I_{2s} f(x):=\int_{\mathbb{R}^d} \frac{f(y)}{|x-y|^{d-2s}} d y, 
$$
we have 
$$
\|I_{2s} f\|_{M^{d_s}} \leq C\|f\|_{L^{1}(\mathbb{R}^d)},
$$
where $M^{d_s}$ is the Marcinkiewicz space.
Recall that for $p>1$ the Marcinkiewicz space
$M^p$, also denoted by $L^{p,\infty}(\mathbb{R}^d)$, is the
space of all measurable functions $f$ on $\mathbb{R}^d$, identified up to
equality almost everywhere, such that
\[
 \|f\|_{p,\infty}
 :=
 \sup_{\lambda>0}
 \lambda\,
\left[\ell^d\left(\{x\in\mathbb{R}^d:|f(x)|>\lambda\}\right)\right]^{1/p}
 <\infty.
\]
The functional $\|\cdot\|_{p,\infty}$ is equivalent to the norm
\[
 \|f\|_{M^p}
 :=
 \sup_{\substack{E\in\mathcal B(\mathbb R^d)\\
                  0<\ell^d(E)<\infty}}
\frac{\int_E |f(x)|\,dx}{ [\ell^d(E)]^{1/p'}},
\]
with  $p'=p/(p-1)$ (see e.g. \cite[Section~1.1]{Grafakos2014}).

\subsection{Strictly Stable L\'evy Generators}

As mentioned in the introduction, throughout the paper $L$ denotes a strictly 
$2s$-stable L\'evy  operator \eqref{L}--\eqref{scaling_def} satisfying the ellipticity condition \eqref{ellipticity}.  
In the case $s\in (0,1/2)$ we assume that the Green function $G$ of $L$ is strictly positive.
This condition is satisfied if
\begin{equation}
\operatorname{Cone}(\operatorname{Conv}(\operatorname{supp} \lambda))=\mathbb R^d
\end{equation}
where $\operatorname{Cone}(A)$, $\operatorname{Conv}(A)$,  
denote the cone and  convex hull generated by $A$, respectively.
Note that for $s\in [1/2,1)$ the assumptions imposed above guarantee that $G$ is strictly positive 
(see \cite{AshbaughRajputRamaMurthySundberg1992}, \cite{PortVitale1988}). 
Furthermore, by \cite[Proposition I.10]{Bertoin},  $G(x,\cdot)$, $G(\cdot,y)$ are lower semicontinuous 
for fixed $x,y\in\mathbb R^d$.

It is well known that every L\'evy  operator generates a Markov $C_0$-semigroup on $C_0(\mathbb R^d)$,
with the operator core $C_c^2(\mathbb R^d)$ (see \cite[Corollary 2.10]{BSW}).
We denote by $L^*$ the L\'evy operator with representation \eqref{L} obtained by replacing 
$(b,\nu)$ with $(-b,\nu^*)$, where  $\nu^*(dx) := \nu(-dx)$.  
We let $(L,\mathscr{D}(L))$ and $(L^*,\mathscr{D}(L^*))$ be the closures of 
$(L,C_c^2(\mathbb{R}^d))$ and $(L^*,C_c^2(\mathbb{R}^d))$ in $C_0(\mathbb R^d)$, 
respectively, and write $(T_t)$, $(T^*_t)$ for their associated semigroups on $C_0(\mathbb{R}^d)$.  

\begin{remark}
\label{rem.adj}
By \cite[Proposition II.1]{Bertoin}, for any positive $\eta_1, \eta_2 \in C_0(\mathbb R^d)$,
\begin{equation}
\label{eq.dual}
\int_{\mathbb{R}^d} T_t \eta_1 \, \eta_2 = \int_{\mathbb{R}^d} \eta_1 T^*_t \eta_2, \qquad t \geq 0.
\end{equation}
\end{remark}

By \eqref{eq.symbolcomp2} and  \cite[Theorem 3.7]{BF}, the form $C^2_c(\mathbb R^d)\times C^2_c(\mathbb R^d)\ni(u,v)\mapsto \EE^{0}(u,v):=(-Lu,v)$
is closable in $ L^2(\mathbb R^d)$ and its closure, denoted by   $(\EE,\mathscr D(\EE))$, is a regular 
(non-symmetric) Dirichlet form with the domain $\mathscr D(\EE)=H^s(\mathbb R^d)$ (see e.g. \cite{MR}).
In Fourier notation, for $u,v\in \mathscr D(\EE)$,
\begin{equation}
\label{eq.fnot}
\mathcal{E}(u, v)=\int_{\mathbb{R}^d}\psi(\xi) \widehat{u}(\xi) \overline{\widehat{v}(\xi)}\,d \xi.
\end{equation}

By \cite[Theorem I.2.8]{MR} $(T_t)$, $(T^*_t)$ extend from $C_0(\mathbb R^d)\cap L^2(\mathbb R^d)$ to 
 strongly continuous  contraction semigroups $(T_t^{(2)})$, $(T_t^{(2),*})$  on $L^2(\mathbb{R}^d)$.
We denote by $(L^{(2)}, \mathscr{D}(L^{(2)}))$, $(L^{(2),*}, \mathscr{D}(L^{(2),*}))$  the operators generated by 
$(T_t^{(2)})$ and $(T_t^{(2),*})$, respectively.

\subsection{L\'evy processes and related semigroups}
\label{sec.fprs}

It is well known that for any L\'evy generator $(L,\mathscr D(L))$ there exists a L\'evy process $(Y_t)_{t\ge 0}$
on a complete probability space $(\Omega_0,\mathcal F,\mathbb P)$ that is associated with  $L$, i.e.
\[
\mathbb Ef(x+Y_t)=T_tf(x),\quad t\ge 0,x\in\mathbb R^d,\, f\in C_0(\mathbb R^d).
\]
Here and in what follows we use the standard  notation of probability theory: 
 for any measurable function $Z$ on $\Omega_0$,
\[
\mathbb EZ:=\int_{\Omega}Z(\omega)\,\mathbb P(d\omega).
\]
Recall that a c\`adl\`ag process $(Y_t)$ is called a L\'evy process if
\begin{enumerate}
\item[(i)] $Y_0=0$, and   $Y_t-Y_s$,  $Y_{t-s}$ admit the same distribution  for any $0\le s\le t$,
\item[(ii)] $Y_{t_k}-Y_{t_{k-1}},\dots,Y_{t_2}-Y_{t_1}$ are independent for any $0\le t_1\le \dots\le t_k$ and $k\ge 1$.
\end{enumerate}
It is convenient  to introduce the canonical realization of the L\'evy process $(Y_t)$.
Let $\Omega$ denote the set of all functions $\omega:[0,\infty)\to \mathbb R^d$
that are  right-continuous on $[0,\infty)$ and possess left limits on $(0,\infty)$ (c\`adl\`ags).
We equip $\Omega$ with the Skorokhod metric $\rho$ (see  \cite[Section 12]{bil}). 
Then $(\Omega,\rho)$ is a Polish space.
Next, let 
\[
X_t(\omega):=\omega(t),\quad \omega\in \Omega,\, t\ge 0,\quad \mathcal F^X_t:=\sigma(X_s: s\le t).
\]
We define the family  $(\mathbb P_x,\, x\in \mathbb R^d)$ 
of Borel probability measures on $\Omega$ by
\[
\mathbb P_x(X_\cdot \in B):= \mathbb P(x+Y_\cdot\in B),\quad B\in\mathcal B(\Omega).
\]
Observe that 
\begin{itemize}
\item for any $f\in C_0(\mathbb R^d)$
\[
T_tf(x)=\mathbb E_xf(X_t),\quad x\in\mathbb R^d,\, t\ge 0,
\]
\item for all $x\in\mathbb R^d$, $s,t\ge 0$, we have  $\mathbb E_{X_t}f(X_s)=\mathbb E_x[f(X_{s+t})|\FF_t^X]$.
\end{itemize}
The second condition says that $(\mathbb P_x,\, x\in \mathbb R^d)$ is a {\em Markov family}.

Let $(\mathbb P^{*}_x,\, x\in \mathbb R^d)$ 
denote the canonical Markov family  related to $L^*$.
In what follows for   any  $f\in p\mathscr B(\mathbb R^d)$  we let
\[
P_tf(x):=\mathbb E_x f(X_t),\quad P^{*}_tf(x):=\mathbb E^{*}_x f(X_t),\qquad x\in \mathbb R^d.
\]
For any  $f\in\mathscr B(\mathbb R^d)$, we let
$P_tf(x):= P_tf^+(x)-P_tf^-(x)$ if $P_tf^+(x)<\infty$ or $P_tf^-(x)<\infty$, and zero otherwise.
We adopt   an analogous convention  for $P^{*}_tf$. 
We also let for any    $f\in p\mathscr B(\mathbb R^d)$
\[
R_\alpha f(x):= \int_0^\infty e^{-\alpha t} P_t f(x)\,dt, \quad R^{*}_\alpha f(x):= \int_0^\infty e^{-\alpha t} P^{*}_t f(x)\,dt,\qquad x\in \mathbb R^d.
\]
For each   $R_\alpha f$ and $R^{*}_\alpha f$, with $f\in\mathscr B(\mathbb R^d)$, we adopt the convention 
analogous to that for  $(P_tf), (P^{*}_tf)$.
Finally, we denote  $R:=R_0, R^{*}:=R^{*}_0$. 

Let $V\in\mathcal B(\mathbb R^d)$ and $(S_t)$ be  a semigroup of  positive linear operators on $p\mathscr B(V)$.
Recall (see \cite[Section II]{BG}) that   a positive function $u\in \mathscr B(V)$
is called $\alpha$-excessive with respect to  $(S_t)$ provided that:
\begin{enumerate}
\item[a)] $e^{-\alpha t}S_tu(x)\le u(x),\, t\ge 0,\, x\in V$,
\item[b)] $\lim_{t\to 0^+}S_tu(x)=u(x),\, x\in V$.
\end{enumerate}
If $\alpha=0$ we simply say that $u$ is $(S_t)$-excessive.
By \cite[Theorem VI.1.4]{BG} for any $\alpha\ge 0$  there exists a  unique
function $G_\alpha \in p\mathscr B(\mathbb R^d\times \mathbb R^d)$
such that for any $f\in p\mathscr B(\mathbb R^d)$,
\begin{equation}
\label{eq.GGG}
R_\alpha f(x)=\int_{\mathbb R^d} G_\alpha(x,y)f(y)\, dy,\quad R^{*}_\alpha f(x)=\int_{\mathbb R^d} G_\alpha(y,x)f(y)\, dy,\quad x,y\in\mathbb R^d,
\end{equation}
and  $G_\alpha(\cdot,y)$, $G_\alpha(x,\cdot)$ are  $\alpha$-excessive with respect to $(P_t)$, $(P^{*}_t)$, respectively
for any $x,y\in\mathbb R^d$.
The function $G_\alpha(\cdot,\cdot)$ is called $\alpha$-Green's function for $-L$. In case $\alpha=0$ we simply write 
 $G(\cdot,\cdot)$.

\subsection{Killed L\'evy processes and related Green functions}

For any Borel  set $B\subset\mathbb R^d$, we let 
\begin{equation}
\label{eq.hitt}
\tau_B(\omega):=\inf\{t>0: X_t(\omega)\notin B\}.
\end{equation}
It is known  (see e.g.  \cite[Theorem III.3.3]{BG} applied to $\mathbb P_x$ and  $M_t:=\mathbf1_{[0,\tau_V)}(t)$)
that   for open $V\subset\mathbb R^d$,
\begin{equation}
\label{eq.pdef}
\begin{split}
&P^{V}_tf(x):=\mathbb E_x [f(X_t)\mathbf 1_{\{t<\tau_V\}}],\quad P^{*,V}_tf(x):=\mathbb E^{*}_x [f(X_t)\mathbf 1_{\{t<\tau_V\}}]
\end{split}
\end{equation}
define Markov semigroups.
We let
\begin{equation}
\label{eq.rdef}
R^{V}_\alpha f(x):=\mathbb E_x \int_0^{\tau_V} e^{-\alpha t}f(X_t)\, dt,\quad
R^{*,V}_\alpha f(x):=\mathbb E^{*}_x \int_0^{\tau_V} e^{-\alpha t}f(X_t)\, dt,\quad x \in V.
\end{equation}
Observe that for  $f\in p\mathscr B(\mathbb R^d)$, $P^{V}_tf\le P_tf$ for all $t>0$.
By \cite[Theorem 3.5.7]{Oshima}, $(P^{V}_t), (P^{*,V}_t)$ 
can be extended from $C_b(V)\cap L^2(V)$ to  strongly continuous  contraction semigroups on $L^2(V)$, 
that we shall denote by  $(T^{(2),V}_t), (T^{(2),*,V}_t)$, respectively.
We denote by $L^{(2)}_{V}, L^{(2),*}_{V}$ respective generators of  $(T^{(2),V}_t), (T^{(2),*,V}_t)$, and by 
$(G^{(2),V}_\alpha), (G^{(2),*,V}_\alpha)$ respective resolvents.

By \cite[Theorem 3.4]{SW1} $(R^{V}_\alpha)_{\alpha\ge 0}$  is strongly Feller. 
Furthermore, by \cite[Theorem 3.5.7]{Oshima} 
\[
\int_{\mathbb R^d} R^{V}f\,g=\int_{\mathbb R^d} f\,R^{*,V}g,\quad f,g\in\mathscr B_b(\mathbb R^d).
\]
By \cite[Theorem VI.1.4]{BG},  for any $\alpha\ge 0$ there exists $\alpha$-Green's function 
\begin{equation}
\label{eq.gvvve}
G_{V,\alpha}:V\times V\to [0,\infty],
\end{equation}
related to  $(R^{V}_\alpha)_{\alpha\ge 0}$, and
according to \cite[Definition VI.1.2]{BG}   $(R^{V}_\alpha)$ and $(R^{*,V}_\alpha)$ 
are {\em in duality relative to the Lebesgue measure}.
Here $G_{V,\alpha}(x,y)$ plays the role of $u_\alpha(x,y)$ considered in \cite[Definition VI.1.2]{BG}.
 We set 
\[
G_{V}(x,y):=G_{V,0}(x,y).
\]
In what follows we regard functions $G_{V,\alpha}$ as functions on the whole $\mathbb R^d\times\mathbb R^d$
 by setting them equal to zero  outside $V\times V$.

\section{Green operator and its properties}
\label{sec3}
Let $V$ be an open  subset of $\mathbb R^d$. For any Borel positive measure $\mu$ on $V$ and $\alpha\ge 0$ we let
\begin{equation}
\label{dual.res2}
R^{V}_\alpha\mu(x):=\int_{V}G_{V,\alpha}(x,y)\,\mu(dy),\quad  R^{*,V}_\alpha \mu(x):=\int_{V}G_{V,\alpha}(y,x)\,\mu(dy),\quad x\in V.
\end{equation}
We also use the shorthand notation $R^{V}\mu=R^{V}_0\mu$, $R^{*,V} =R^{*,V}_0$.
For any positive Borel functions $f,g$ on $\mathbb R^d$ and positive Borel measure $\mu$ on $\mathbb R^d$ we let
\[
(f,g):=\int_{\mathbb R^d}fg,\quad (\mu,f):=\int_{\mathbb R^d}f\,d\mu.
\]

By the very definition, $G_V(\cdot,y)$ is $(P^V_t)$-excessive for any $y\in V$ and 
$G_V(x,\cdot)$ is $(P^{*,V}_t)$-excessive for any $x\in V$. Therefore $R^V\mu$ (resp. $R^{*,V}\mu$) is 
$(P^V_t)$-excessive  (resp. $(P^{*,V}_t)$-excessive) for any positive Borel measure $\mu$ on $V$.

\begin{lemma}\label{trunc-sobolev}
Let $\mu$ be a positive bounded Borel measure on $V$, and let $u := R^V\mu$. Then for every $k\ge 0$, 
$u\wedge k\in H_0^s(V)$ and satisfies
\begin{equation}
\label{trunc-measure-bound}
\mathcal{E}(u\wedge k, u\wedge k) \le k \mu(V).
\end{equation}
\end{lemma}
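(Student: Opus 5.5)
We prove the estimate first for smooth data and then pass to the limit. The smooth case rests on the identity $\mathcal E(u,u)=\int \psi|\widehat u|^2=\int \operatorname{Re}\psi\,|\widehat u|^2$, which holds because $\operatorname{Re}\psi\ge0$ and $u$ is real. This makes $\mathcal E(w,w)$ a nonnegative quadratic quantity even though $L$ is not symmetric, and it is the key point.

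\emph{Step 1: bounded densities.} Take $\mu=f\,dx$ with $f\in L^2(V)\cap L^\infty(V)$, $f\ge0$. Since $V$ is bounded, $\tau_V$ has finite expectation and $\sup_x\mathbb E_x\tau_V<\infty$, so $R^V$ is bounded on $L^2(V)$. Then $u=R^Vf=G^{(2),V}_0 f\in\mathscr D(L^{(2)}_V)\subset H^s_0(V)$ and $\mathcal E(u,v)=(f,v)$ for all $v\in H^s_0(V)$. For $k\ge0$, the function $u\wedge k$ is a normal contraction of $u$, so it lies in $H_0^s(V)$, because the form is a Dirichlet form. Set $w=u\wedge k$ and $r=u-w=(u-k)^+$. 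Then
\[
\mathcal E(w,w)=\mathcal E(u,w)-\mathcal E(r,w)=(f,w)-\mathcal E(r,w)\le k\int_V f-\mathcal E(r,w).
\]
It therefore suffices to show $\mathcal E(r,w)\ge0$. On $C^2_c$ functions this is $(-Lr,w)$. At points where $r>0$ we have $w=k$, the maximal value of $w$. Using the jump representation, $-L r(x)\,w(x)$ integrated splits as follows. The drift and local parts vanish because $r$ and $w$ have disjoint "active" sets: $\nabla r\,w=k\nabla r$ integrates to zero. The jump part gives $\iint (r(x)-r(x+y))w(x)\,\nu(dy)dx$. Rewriting with $w(x)=k$ on $\{r>0\}$ and $w\le k$ elsewhere yields the pointwise inequality $(r(x)-r(x+y))w(x)\ge r(x)w(x)-r(x+y)w(x+y)$. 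Its integral vanishes by translation invariance of Lebesgue measure, up to a compensator term that is handled by the same invariance. Rigorously, I would rather use the semigroup form: $\mathcal E(r,w)=\lim_{t\to0}t^{-1}(r-T_t r,w)$, and $(r-T_tr,w)\ge (r,w)-(T_t r, w)$. Since $0\le w\le k$, $w=k$ on $\{r>0\}$, and $T_t$ is positive, sub-Markov and preserves Lebesgue measure ($\int T_t r=\int r$), we get $(T_tr,w)\le k\int T_t r=k\int r=(r,w)$. Hence $\mathcal E(r,w)\ge0$. This avoids regularity issues.

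\emph{Step 2: general $\mu$.} Approximate $\mu$ by $f_n=nR^{*}$-type regularizations, or more concretely by $f_n:=\alpha_n$-resolvent smoothing. A clean choice is $u_n:=nR^V_n u\uparrow u$, which equals $R^V f_n$ with $f_n=n(u-nR^V_nu)\ge0$ by the resolvent identity and excessivity of $u$. However, $\int f_n$ need not be bounded by $\mu(V)$. Instead I would take $f_n:=\mu * j_{1/n}$ restricted to compact subsets $V_n\uparrow V$, so that $\int f_n\le\mu(V)$ and $R^Vf_n\to R^V\mu$ a.e. For the a.e. convergence, use lower semicontinuity in $y$ of $G_V(x,\cdot)$ together with the fact that $G_V(x,\cdot)$ is $P^{*,V}$-excessive, hence the convergence holds along the fine topology; alternatively, test against $g\in C_c(V)$ and use the strong Feller property of $R^{*,V}$. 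Then $\mathcal E(u_n\wedge k,u_n\wedge k)\le k\mu(V)$ gives a uniform $H^s_0(V)$ bound, since $\mathcal E(v,v)\asymp\|v\|_{\dot H^s}^2$ by ellipticity and $V$ is bounded (Poincaré). A weakly convergent subsequence then has its limit identified with $u\wedge k$ through $L^1_{loc}$ convergence, and weak lower semicontinuity of the symmetric part of $\mathcal E$ gives the bound.

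\emph{Main obstacle.} The hard part is the convergence $R^Vf_n\to R^V\mu$ in Step 2. Vague convergence of $f_n\,dx$ to $\mu$ only gives convergence of $(R^Vf_n,g)=(f_n,R^{*,V}g)$ for $g$ with $R^{*,V}g$ continuous on $V$ and vanishing near $\partial V$, and strong Feller continuity alone does not supply the vanishing. I would first work on $V_m\Subset V$ and use $G_{V_m}\uparrow G_V$. Weak convergence in $L^1_{loc}$ suffices for identifying the weak $H^s$ limit, provided truncation commutes with the limit. This is the delicate point, handled by taking limits of $u_n$ before truncating and using a.e. convergence obtained from monotone choices.
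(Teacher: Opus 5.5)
Your Step~1 is sound, and it is more self-contained than the paper, which simply cites \cite[Theorem 4.2]{KR:CM} for the estimate with $L^1$ densities. The semigroup argument $(T_tr,w)\le k\int T_tr=k\int r=(r,w)$ correctly yields $\mathcal E((u-k)^+,u\wedge k)\ge0$. This works because the full-space semigroup is positive and preserves Lebesgue measure, and $\mathcal E(r,w)=\lim_{t\to0}t^{-1}(r-T_tr,w)$ on $H^s(\mathbb R^d)$ follows from the Fourier representation. The heuristic jump-kernel computation before it can simply be dropped.

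The gap is in Step~2. With $f_n=(\mu*j_{1/n})\mathbf 1_{V_n}$ you can get weak convergence $R^Vf_n\to R^V\mu$ in $L^1(V)$. Your worry about vague versus narrow convergence is in fact unfounded: if $V_n$ is chosen so that $B(y,1/n)\subset V_n$ eventually for each $y\in V$, then $f_n\,dx\to\mu$ in $[C_b(V)]^*$, and $R^{*,V}g\in C_b(V)$. However, weak convergence does not pass through the truncation. From weak convergence alone, a weak $H^s_0$-limit $\tilde w$ of $R^Vf_n\wedge k$ only satisfies $\tilde w\le R^V\mu\wedge k$. Identifying it requires a.e. convergence of $R^Vf_n$, which your proposal never establishes. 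Lower semicontinuity or fine continuity of $G_V(x,\cdot)$ gives only a $\liminf$ inequality. Your mollified sequence is also not monotone, so the closing remark about ``monotone choices'' does not apply. The missing ingredient could be supplied by Deny's theorem (Theorem~\ref{th-denys}). Alternatively, apply the compact embedding $H^s_0(V)\hookrightarrow L^2(V)$ to $R^Vf_n\wedge j$ for every $j$ with a diagonal subsequence, and combine this with the uniform integrability that your energy bounds and the Sobolev inequality provide.

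The simplest fix, though, is the approximation you discarded: it is exactly the right one. Your claim that $\int f_n$ need not be bounded by $\mu(V)$ is false. By the resolvent identity, $f_n=n(u-nR^V_nu)=nR^V_n\mu$. By duality, $\int_V nR^V_n\mu\,dx=(\mu,nR^{*,V}_n1)\le\mu(V)$, because the dual resolvent is sub-Markovian. Since $u$ is excessive, $R^Vf_n=nR^V_nu\uparrow u$, hence $R^Vf_n\wedge k\uparrow u\wedge k$ pointwise, and the weak limit is identified for free. This is precisely the paper's proof. The only extra point is that $nR^V_n\mu$ need not be bounded, so you need Step~1 for nonnegative $f\in L^1(V)$. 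That follows from your bounded case applied to $f\wedge m$, using $R^V(f\wedge m)\uparrow R^Vf$ and the same weak-compactness argument.
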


\begin{proof}
For $\alpha > 0$, define $v_\alpha := \alpha R_\alpha^V \mu$. 
Let $u_\alpha := R^Vv_\alpha$. By  \cite[Theorem 4.2]{KR:CM}, $u_\alpha\wedge k  \in H_0^s(V)$ and
\begin{equation}\label{alpha-energy-bound}
    \mathcal{E}(u_\alpha\wedge k, u_\alpha\wedge k) \le k\int_Vv_\alpha.
\end{equation}
Since the dual resolvent is sub-Markovian, we have
\begin{equation}\label{mu-alpha-bound}
\int_V v_\alpha=(v_\alpha,1)= (\alpha R_\alpha^V \mu,1)= (\mu,\alpha R^{*,V}_\alpha 1) \le\mu(V).
\end{equation}
Combining \eqref{alpha-energy-bound} and \eqref{mu-alpha-bound} yields a uniform bound on the energy
\begin{equation}\label{uniform-energy}
    \mathcal{E}(u_\alpha\wedge k, u_\alpha\wedge k) \le k \mu(V).
\end{equation}
On the other hand, using the resolvent identity, we observe that
\begin{equation*}
    u_\alpha = R^V (\alpha R_\alpha^V \mu) = \alpha R_\alpha^V (R^V \mu) = \alpha R_\alpha^V u.
\end{equation*}
Since $u = R^V \mu$ is a $(P^V_t)$-excessive function, the sequence $\alpha R_\alpha^V u$ increases pointwise to $u$ as $\alpha \to \infty$. Consequently, $u_\alpha\wedge k \nearrow u\wedge k$.
Because of the uniform energy bound \eqref{uniform-energy}, the sequence $(u_\alpha\wedge k)_{\alpha > 0}$
 is bounded in the Hilbert space $H_0^s(V)$. By extracting a weakly convergent subsequence and using 
Fatou's lemma for forms, we conclude that the pointwise limit $u\wedge k$ belongs to $H_0^s(V)$ and satisfies
\begin{equation*}
\mathcal{E}(u\wedge k, u\wedge k)\le\liminf_{\alpha \to \infty} \mathcal{E}(u_\alpha\wedge k, u_\alpha\wedge k)\le k\mu(V).
\end{equation*}
This completes the proof.
\end{proof}

\begin{lemma}
\label{lm.duoper}
Let   $\mu$ be a positive bounded Borel measure  on $V$.
Then  $R^{V}\mu\in M^{d_s}$ and  for any $\eta\in C_c^2(V)$,
\[
-(R^{V}\mu,L^{*}\eta)=(\mu,\eta).
\]
Furthermore, there exists a constant $C>0$, independent of $\mu$ and $V$, such that
\begin{equation}
\label{eq.margr}
\|R^V\mu\|_{M^{d_s}}\le C\mu(V).
\end{equation}
\end{lemma}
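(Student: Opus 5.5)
Two claims need proof: the Marcinkiewicz bound, which I will get by comparing with the free Green function, and the distributional identity, which I will get from duality plus Dynkin's formula for the dual killed process.

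\emph{Marcinkiewicz bound.} Since $P^V_t f\le P_t f$ for $f\ge 0$, we have $R^V f\le Rf$, and hence $G_V(x,y)\le G(x,y)$ for a.e.\ $x$, for each $y$. By translation invariance, $G(x,y)=G(x-y)$ for some kernel $G$. By the scaling \eqref{scaling_def}, $G(x)=|x|^{2s-d}G(x/|x|)$. The key point is to bound $\int_{\mathbb S^{d-1}}G(\theta)\,\sigma(d\theta)$, that is, to show $G\in M^{d_s}$ (in fact $G\in L^1_{\mathrm{loc}}$ with the right weak-type decay).

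I would argue as follows, since $G$ is not assumed bounded on the sphere. By \eqref{ellipticity} the symbol satisfies $\Re\psi(\xi)\asymp|\xi|^{2s}$. Then $P_t$ has a density $p_t$ with
\[
\|p_t\|_\infty\le \int e^{-t\Re\psi}\le Ct^{-d/(2s)},
\]
and $\|p_t\|_1=1$. Splitting $G=\int_0^T p_t\,dt+\int_T^\infty p_t\,dt$ and optimizing in $T$ gives, for any set $E$,
\[
\int_E G\le T+C\,\ell^d(E)\,T^{1-d/(2s)}.
\]
Choosing $T=\ell^d(E)^{2s/d}$ yields $\int_E G\le C\,\ell^d(E)^{2s/d}=C\,\ell^d(E)^{1/d_s'}$, because $1/d_s'=1-(d-2s)/d=2s/d$. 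So $\|G\|_{M^{d_s}}\le C$, with $C$ independent of $V$.

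Then
\[
\int_E R^V\mu=\int_V\!\int_E G(x-y)\,dx\,\mu(dy)\le C\,\mu(V)\,\ell^d(E)^{1/d_s'},
\]
which is \eqref{eq.margr}. The same computation shows $R^V\mu\in L^1_{\mathrm{loc}}$, so the pairing below makes sense.

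\emph{Distributional identity.} By Fubini and duality,
\[
(R^V\mu,L^*\eta)=\int_V R^{*,V}(L^*\eta)\,d\mu .
\]
Here $R^{*,V}(L^*\eta)(y)=\mathbb E^*_y\int_0^{\tau_V}L^*\eta(X_t)\,dt$, which is well defined since $L^*\eta$ is bounded and $\mathbb E^*_y\tau_V\le\sup R^*\mathbf 1_V<\infty$, because $V\subset D$ is bounded. Dynkin's formula for $\eta\in C^2_c\subset\mathscr D(L^*)$ at the stopping time $\tau_V$ gives
\[
\mathbb E^*_y\eta(X_{\tau_V})-\eta(y)=\mathbb E^*_y\int_0^{\tau_V}L^*\eta(X_t)\,dt .
\]
Since $\operatorname{supp}\eta\subset V$ and $X_{\tau_V}\notin V$, the boundary term vanishes. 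Hence $R^{*,V}(L^*\eta)=-\eta$ on $V$, and integrating against $\mu$ gives the claim.

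\emph{Main obstacle.} The main care point is justifying Fubini with the signed, bounded $L^*\eta$: the integrability of $G_V(x,y)|L^*\eta(x)|$ against $dx\,\mu(dy)$ follows from $\sup_y\int_V G(x-y)\,dx<\infty$, by the bound above. A second care point is that $\tau_V$ may not be integrable when $V$ is unbounded, so I will use $V$ bounded. The heat-kernel sup bound needs only $\Re\psi\ge c|\xi|^{2s}$, which follows from \eqref{ellipticity}.
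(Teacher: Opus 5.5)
Your proof is correct, but on both halves it follows a different route from the paper.

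\textbf{Weak-type bound.} The paper uses an energy argument. Lemma~\ref{trunc-sobolev} gives $u\wedge k\in H^s_0(V)$ with $\mathcal E(u\wedge k,u\wedge k)\le k\mu(V)$. The comparison $\operatorname{Re}\psi\asymp|\xi|^{2s}$ and the fractional Sobolev inequality then give $\|u\wedge k\|^2_{L^{2d_s}}\le Ck\mu(V)$, hence $\ell^d(\{u\ge k\})\le Ck^{-d_s}\mu(V)^{d_s}$. The paper first reduces to bounded $V$ via Proposition~\ref{prop.green}, and only afterwards derives the free-kernel bound (Corollary~\ref{green-weak}) from the lemma.

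You reverse the order. The ultracontractive bound $\|p_t\|_\infty\le Ct^{-d/(2s)}$ and the time splitting give $\sup_y\int_E G(x,y)\,dx\le C\ell^d(E)^{2s/d}$ directly. Domination by $G$ then transfers this to every open $V$ at once, with no Dirichlet forms, no truncations and no reduction step. What the paper's route buys is Lemma~\ref{trunc-sobolev} itself, which it reuses later in Lemma~\ref{lemma-H0s}.

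\textbf{Distributional identity.} The paper identifies $L^*\eta$ with $L^{(2),*}_V\eta$ and inverts with the $L^2$ resolvent to get an a.e.\ identity on $V$. It then upgrades this to every point by continuity, which is necessary because $\mu$ may charge Lebesgue-null sets. Your Dynkin argument for the dual process stopped at $\tau_V$ gives $R^{*,V}(L^*\eta)=-\eta$ at every $y\in V$ directly. It avoids both the domain question and the continuity step, and it gets the sign right: the paper's displayed chain ends in $\eta$ instead of $-\eta$.

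\textbf{Two small repairs.}
\begin{itemize}
\item From $R^Vf\le Rf$ you get $G_V(x,\cdot)\le G(x,\cdot)$ a.e.\ for each fixed $x$. With a possibly singular $\mu$ you need the other order: for every $y$, $\int_E G_V(x,y)\,dx=R^{*,V}\mathbf 1_E(y)\le R^*\mathbf 1_E(y)$. This follows in the same way from the dual semigroups, or from Proposition~\ref{prop.green}(1).
\item The first relation in your transfer display should be $\le$, not an equality.
\end{itemize}

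Restricting the identity to bounded $V$ is harmless relative to the paper, which applies it only to bounded sets. If you want general open $V$, stop at $\tau_V\wedge n$ and use transience together with $L^*\eta\in L^1\cap L^\infty$, so that $\sup_y R^*|L^*\eta|(y)<\infty$.
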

\begin{proof}
In view of Proposition~\ref{prop.green}, it suffices to establish \eqref{eq.margr} when \(V\) is bounded. 
Set \(u:=R^V\mu\). 
Lemma~\ref{trunc-sobolev} yields \(u\wedge k\in H_0^s(V)\), together with the energy estimate \eqref{trunc-measure-bound}. 
Combining \eqref{eq.symbolcomp1} and  \eqref{eq.fnot}, we obtain
\[
c\int_{\mathbb{R}^d} \int_{\mathbb{R}^d} \frac{|(u\wedge k)(x)-(u\wedge k)(y)|^2}{|x-y|^{d+2s}} \, dx\,dy
=C\int_{\mathbb{R}^d}|\xi|^{2 s}|\widehat{(u\wedge k)}(\xi)|^2 \,d \xi\leq \mathcal{E}(u\wedge k, u\wedge k),
\]
which together with Sobolev embedding (see e.g. \cite[Theorem 7.6]{Leoni2023}) and \eqref{trunc-measure-bound} implies that 
\[
\|u\wedge k \|^2_{L^{2{d_s}}(V)}\leq C k\mu(V).
\]
Thus 
\[
\ell^d(\{u\ge k\})\leq Ck^{-{d_s}}[\mu(V)]^{d_s},\quad k>0,
\]
showing \eqref{eq.margr}.
It remains to verify the asserted distributional identity. 
For \(\eta\in C_c^2(V)\), we have \(L^*\eta\in C_b(\mathbb R^d)\) and \(L^*\eta=L_V^{(2),*}\eta\). 
Hence 
\[
R^{*,V}L^*\eta = R^{*,V}L_V^{(2),*}\eta = G^{(2),*,V}L_V^{(2),*}\eta = \eta \quad\text{a.e. in }V.
\]
Both the first and the last members of this chain admit continuous versions; the identity therefore holds pointwise throughout 
\(V\). Duality now gives \[ (R^V\mu,L^*\eta) = (\mu,R^{*,V}L^*\eta) = (\mu,\eta), \] and the proof is complete.
\end{proof}

\begin{corollary}
\label{green-weak}
We have $\sup_{x\in \mathbb R^d}\|G(x,\cdot)\|_{M^{{d_s}}}<\infty$ and $\sup_{y\in \mathbb R^d}\|G(\cdot,y)\|_{M^{{d_s}}}<\infty$. 
\end{corollary}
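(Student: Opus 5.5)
The plan is to apply Lemma~\ref{lm.duoper} with $V=\mathbb R^d$ and $\mu=\delta_x$ for the first claim, and with the dual process for the second. The lemma is stated for open $V$ and positive bounded Borel measures, and its constant $C$ does not depend on $\mu$ or $V$. Its proof reduces the case of unbounded $V$ to bounded sets via Proposition~\ref{prop.green}, which we take as available. With $V=\mathbb R^d$ we have $G_{\mathbb R^d}=G$, and $R^{\mathbb R^d}\delta_x(\cdot)=G(\cdot,x)$. Hence
\[
\|G(\cdot,x)\|_{M^{d_s}}\le C\,\delta_x(\mathbb R^d)=C
\]
for every $x\in\mathbb R^d$, which is the second claim, with $y$ in place of $x$. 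If one prefers to avoid $V=\mathbb R^d$, take $V=B_n(x)$. Then $G_{B_n(x)}(\cdot,x)\nearrow G(\cdot,x)$ as $n\to\infty$, because $\tau_{B_n(x)}\nearrow\infty$ and Green functions increase along increasing domains. Lower semicontinuity of the Marcinkiewicz norm under monotone limits, clear from the distribution-function definition of $\|\cdot\|_{p,\infty}$, then gives the same uniform bound.

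For the first claim, namely a bound on $G(x,\cdot)$ uniform in $x$, we use duality. The function $G(x,\cdot)$ is the kernel of $R^{*}$, i.e.\ $R^*\delta_x(\cdot)=\int G(y,\cdot)\,\delta_x(dy)=G(x,\cdot)$. The operator $L^*$ is again strictly $2s$-stable, with triplet $(-b,\nu^*)$ and spectral measure $\lambda^*(d\theta)=\lambda(-d\theta)$. It satisfies the same ellipticity condition \eqref{ellipticity} with the same $\Lambda$, since $|\xi\cdot\theta|^{2s}$ is even in $\theta$. For $s<1/2$ it also satisfies the cone condition, as $\operatorname{supp}\lambda^*=-\operatorname{supp}\lambda$. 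So every preliminary result applies to $L^*$ with the roles of $(P_t)$ and $(P^*_t)$ interchanged. In particular, the analogues of Lemma~\ref{trunc-sobolev} and Lemma~\ref{lm.duoper} hold for $R^{*,V}$. The energy estimate only uses the form $\EE^*(u,u)=\EE(u,u)$, which has the same symbol comparability \eqref{eq.symbolcomp1}; indeed the symbol of $L^*$ is $\overline{\psi}$, whose real part equals that of $\psi$. Applying the dual version of \eqref{eq.margr} to $\mu=\delta_x$ gives $\|G(x,\cdot)\|_{M^{d_s}}\le C$.

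The main point to check is that the constant is genuinely uniform. In the lemma's proof, $C$ comes from the symbol comparability constant and the Sobolev embedding constant, and neither depends on $x$ or on the domain. The dual application needs the same constants, which the symmetry of $\operatorname{Re}\psi$ guarantees. A second, technical point is that Lemma~\ref{trunc-sobolev}, through \cite{KR:CM}, is applied to the Dirac measure on the whole space. For this we rely on the reduction via Proposition~\ref{prop.green} already used in the lemma, or on the approximation by balls $B_n(x)$ described above.
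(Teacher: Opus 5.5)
Your argument is correct and is essentially the paper's. The paper simply applies \eqref{eq.margr} to Dirac masses via $G(\cdot,y)=R\delta_y$ and $G(x,\cdot)=R^{*}\delta_x$, leaving implicit the dual version of Lemma~\ref{lm.duoper} for $L^*$, which you justify explicitly (same ellipticity and cone conditions, same real part of the symbol). As a side remark, translation invariance $G(x,y)=g(x-y)$ would even make the dual step unnecessary, since both suprema equal $\|g\|_{M^{d_s}}=\|R\delta_0\|_{M^{d_s}}$.
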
 
\begin{proof}
It follows directly from \eqref{eq.margr} since $G(x,y)=(R\delta_y)(x)$, while 
 $G(x,y)=(R^{*}\delta_x)(y)$.
\end{proof}

By Proposition~\ref{prop.green}
\[
G_V(x,y)\leq G(x,y),\quad x,y\in \mathbb{R}^d.
\]
Since the values of $G$ depend only on the difference of the arguments, we shall use the notation $g(x-y):=G(x,y)$. Define
\[
g_V(z):=\mathbf{1}_{V-V}(z)g(z),\quad z\in \mathbb{R}^d,\ V-V:=\{x-y\ |\ x,y\in V\}.
\]
Thus the following inequality holds
\[
G_V(x,y)\leq g_V(x-y),\quad x,y\in \mathbb{R}^d,
\]
which implies that, for $f\in p\mathscr B(\mathbb R^d)$, 
\begin{equation}\label{conv-ineq}
R^Vf\leq g_V\ast (\mathbf1_V f).
\end{equation}

\begin{corollary}\label{green-young}
Let $D$ be an open bounded subset of $\mathbb R^d$ and $h\in L^q(D)$ for some $q\geq 1$. Then the following estimates hold on $L^r(D)$ norms $(1\leq r\leq\infty)$:
\begin{enumerate}
\item $\|\,R^D[h]\,\|_\infty\lesssim \|h\|_q$ for $q>d_s'$;
\item $\|\,R^D[h]\,\|_r\lesssim \|h\|_q$ for $\frac{1}{q}\leq\frac{1}{r}+\frac{1}{d_s'}$, when $1<q<d_s'$;
\end{enumerate}
\end{corollary}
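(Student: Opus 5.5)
The plan is to reduce both estimates to a Young-type inequality for convolution with weak-type kernels, using the pointwise domination \eqref{conv-ineq}. For positive $h$ we have $R^D[h]\le g_D\ast(\mathbf 1_D h)$, and for general $h$ we first split $h=h^+-h^-$, so that $|R^D[h]|\le R^D[|h|]$. It therefore suffices to bound $g_D\ast|h|$ with $h$ extended by zero outside $D$. By Corollary~\ref{green-weak}, $g=G(0,\cdot)$ (up to reflection) lies in $M^{d_s}$. Since $D$ is bounded, $D-D$ is bounded as well, and truncating a weak-$L^{d_s}$ function to a set of finite measure places it in every $L^t$ with $t<d_s$, with norm controlled by $\|g\|_{M^{d_s}}$ and $\ell^d(D-D)$. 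Concretely, I would apply the layer-cake formula $\int_{D-D}g^t=\int_0^\infty t\lambda^{t-1}\ell^d(\{g>\lambda\}\cap(D-D))\,d\lambda$ and bound the measure by $\min(\ell^d(D-D),C\lambda^{-d_s})$. This gives $g_D\in L^t(\mathbb R^d)$ for all $1\le t<d_s$.

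For item (1), take $q>d_s'$, so that $q'<d_s$. Hölder's inequality gives, for every $x$,
\[
|R^D[h](x)|\le \int_D g_D(x-y)|h(y)|\,dy\le \|g_D\|_{q'}\|h\|_q .
\]
This yields the $L^\infty$ bound uniformly in $x$. I will check the endpoint $q=\infty$: then $q'=1<d_s$, which is also covered.

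For item (2), take $1<q<d_s'$ and $r$ with $\frac1q\le\frac1r+\frac1{d_s'}$. Set $\frac1t:=1+\frac1r-\frac1q$. The constraint gives $\frac1t\ge 1-\frac1{d_s'}=\frac1{d_s}$, so $t\le d_s$. In the strict case $t<d_s$, Young's inequality with $g_D\in L^t$ gives $\|g_D\ast|h|\|_r\le\|g_D\|_t\|h\|_q$, and restricting to $D$ finishes the argument. We also need $t\ge1$, i.e. $r\ge q$. If the given $r$ is smaller than $q$, we simply use that $D$ is bounded: $\|\cdot\|_{L^r(D)}\lesssim\|\cdot\|_{L^q(D)}$, which reduces to the case $r=q$ (then $t=1$ and $g_D\in L^1$).

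The main obstacle is the endpoint equality $\frac1q=\frac1r+\frac1{d_s'}$, where $t=d_s$ and $g_D$ is only in $M^{d_s}$, not $L^{d_s}$. Here I would invoke the generalized Young (Hardy--Littlewood--Sobolev/O'Neil) inequality $\|g\ast f\|_r\lesssim\|g\|_{M^{d_s}}\|f\|_q$. It is valid precisely for $1<q<r<\infty$ with $1+\frac1r=\frac1{d_s}+\frac1q$. The hypothesis $1<q<d_s'$ guarantees that $r<\infty$ and $r>q$, so the theorem applies. Alternatively, I would prove it via Marcinkiewicz interpolation between the weak-type bounds $L^1\to M^{d_s}$ (Lemma~\ref{lm.duoper}) and $L^{q_1}\to L^\infty$ (item (1)) for some $q_1>d_s'$. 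The operator $h\mapsto R^D[|h|]$ is sublinear, and the interpolation parameters match the endpoint line.
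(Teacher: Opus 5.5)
Your argument is correct and is essentially the paper's own one-line proof: domination by $g_D\ast(\mathbf 1_D|h|)$ via \eqref{conv-ineq}, the weak-type bound of Corollary~\ref{green-weak}, and Young's inequality. You fill in the details the paper omits, namely truncating $g$ to $D-D$ to get $g_D\in L^t$ for $t<d_s$, and using the weak-type Young (O'Neil) inequality $\|g\ast f\|_r\lesssim\|g\|_{M^{d_s}}\|f\|_q$ at the endpoint $\frac1q=\frac1r+\frac1{d_s'}$.

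One caveat concerns your fallback. Marcinkiewicz interpolation between $L^1\to M^{d_s}$ and $L^{q_1}\to L^\infty$ with $q_1>d_s'$ does not reach the endpoint, because the point $(1/q_1,0)$ lies off the line $\frac1q-\frac1r=\frac1{d_s'}$. Every interpolated pair then satisfies $\frac1q-\frac1r<\frac1{d_s'}$, which ordinary Young already covers. So rely on O'Neil's inequality, or interpolate between two weak-type bounds that both lie on that critical line.
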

\begin{proof}
Since $D$ is bounded, the results follow from  \eqref{conv-ineq},  Corollary~\ref{green-weak}
and Young’s convolution inequality.
\end{proof}

\begin{lemma}
\label{lm2.2}
Let $V$ be a  bounded open subset of $\mathbb R^d$, let   $(\mu_n)$ be a sequence of   bounded 
positive Borel measures on $V$, and let $\mu$ be such a measure. Suppose that $\mu_n\to \mu $ in $[C_b(V)]^*$. Then
\begin{enumerate}
\item[(i)] $R^V\mu_n+R^V\mu<\infty$ q.e., $n\ge 1$,
\item[(ii)] $R^V\mu_n\to R^V\mu$ a.e. along a subsequence.
\end{enumerate}
\end{lemma}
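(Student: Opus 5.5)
For part (i), I will use that $R^V\mu$ is $(P^V_t)$-excessive and, by Lemma~\ref{trunc-sobolev}, all its truncations $R^V\mu\wedge k$ lie in $H^s_0(V)$ with
\[
\mathcal E(R^V\mu\wedge k,R^V\mu\wedge k)\le k\mu(V).
\]
The capacity bound for excessive functions (1-capacity relative to the regular Dirichlet form, using \eqref{eq.symbolcomp1} to control the $H^s$ norm through $\mathcal E$ plus the $L^2$ norm on the bounded set $V$) then gives
\[
\mathrm{Cap}(\{R^V\mu>k\})\le C\,k^{-2}\bigl(\mathcal E(u\wedge k,u\wedge k)+\|u\wedge k\|_2^2\bigr)\le C\bigl(k^{-1}\mu(V)+\|u\|^2_{L^2\cap\ldots}\,k^{-2}\bigr).
\]
I will control the $L^2$ part via $\|u\wedge k\|_2\le \ell^d(V)^{1/2}k$ combined with the Marcinkiewicz bound of Lemma~\ref{lm.duoper}. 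More cleanly, $\|u\wedge k\|_2^2\le k\|u\|_{L^1(V)}\le Ck\mu(V)$, since $M^{d_s}\subset L^1$ on bounded sets. Hence the capacity is $\lesssim k^{-1}\mu(V)\to0$, so $\{R^V\mu=\infty\}$ is polar. Here I use that for an excessive, quasi-continuous $u$ one has $\mathrm{Cap}(\{u>k\})\le \mathcal E_1(u\wedge k)/k^2$ (a standard result for quasi-continuous versions; excessive functions are finely continuous, hence quasi-continuous). Applying this to each $\mu_n$ and to $\mu$ gives (i).

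For part (ii), the main tool is again the energy estimate. Weak convergence in $[C_b(V)]^*$ implies $\sup_n\mu_n(V)<\infty$. Hence, for each fixed $k$, the sequence $(R^V\mu_n\wedge k)_n$ is bounded in $H^s_0(V)$, and $(R^V\mu_n)$ is bounded in $M^{d_s}$, hence in $L^q(V)$ for every $q<d_s$. By the compact embedding $H^s_0(V)\hookrightarrow L^2(V)$ ($V$ bounded) and a diagonal argument over $k\in\mathbb N$, I extract a subsequence along which $R^V\mu_n\wedge k\to w_k$ a.e. for every $k$. Setting $w:=\lim_k w_k$, one gets $R^V\mu_n\to w$ a.e. on the set $\{w<\infty\}$. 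Fatou combined with the uniform $L^1$ bound shows $w<\infty$ a.e., so in fact $R^V\mu_n\to w$ a.e. Uniform integrability coming from the $M^{d_s}$ bound then upgrades this to $R^V\mu_n\to w$ in $L^1(V)$.

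It remains to identify $w=R^V\mu$, and this is where I expect the main difficulty. Testing against $g\in C_c(V)$ gives
\[
(R^V\mu_n,g)=(\mu_n,R^{*,V}g).
\]
Since $R^{*,V}$ is strongly Feller, $R^{*,V}g\in C_b(V)$, and so weak convergence yields $(\mu_n,R^{*,V}g)\to(\mu,R^{*,V}g)=(R^V\mu,g)$. The difficulty is exactly this strong Feller/boundedness property of the dual resolvent: it must map bounded functions into bounded continuous functions on $V$, which follows from the cited strong Feller property of the killed resolvents (for $R^{*,V}$, by the analogous statement for $L^*$) together with $\|R^{*,V}1\|_\infty<\infty$ on bounded $V$. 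The latter holds because $R^{*,V}1\le g_V*\mathbf 1_V$ is bounded, as $g\in M^{d_s}$ is locally integrable. Combining this with the $L^1$ convergence gives $(w,g)=(R^V\mu,g)$ for all $g\in C_c(V)$, hence $w=R^V\mu$ a.e., which proves (ii).
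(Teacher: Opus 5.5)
Your proof is correct, but apart from the identification step in (ii) it follows a different route from the paper. That step is the duality $(R^V\mu_n,g)=(\mu_n,R^{*,V}g)$ combined with the strong Feller property and boundedness of the dual killed resolvent, which is exactly what the paper does.

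For (i) the paper avoids capacities entirely. It observes that $(R^V\gamma,1)=(\gamma,R^{*,V}1)\le\|R^{*,V}1\|_\infty\gamma(V)<\infty$, so $R^V\gamma$ is finite a.e. It then uses Proposition VI.2.3 of Blumenthal--Getoor, a fact from duality theory, to pass from ``finite a.e.'' to ``finite outside a polar set''. Your argument gives more, namely the capacitary weak-type bound $\mathrm{Cap}(\{R^V\mu\ge k\})\lesssim\mu(V)/k$. The price is importing the non-symmetric Dirichlet form machinery:
(a) quasi-continuity of the finely continuous function $R^V\mu\wedge k$;
(b) the capacitary inequality, which you should apply to $R^V\mu\wedge k$ at levels $\lambda<k$ and then let $\lambda\uparrow k$, since $\{R^V\mu\wedge k>k\}=\emptyset$;
(c) the fact that sets of zero capacity are polar, which here relies on the absolute continuity of the killed resolvent.

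For (ii) the paper obtains the a.e.-convergent subsequence from Deny's theorem (Theorem~\ref{th-denys}). That theorem applies to arbitrary sequences of supermedian functions, needs no mass bound, and yields an excessive limit. The paper then gets uniform integrability for free from weak $L^1$ convergence via Dunford--Pettis. You instead combine the uniform energy bound of Lemma~\ref{trunc-sobolev} for the truncations, the compact embedding $H^s_0(V)\hookrightarrow L^2(V)$, and a diagonal argument. You take uniform integrability from the uniform Marcinkiewicz bound of Lemma~\ref{lm.duoper}. This is a self-contained substitute for Deny's theorem in the present setting. It costs you the use of $\sup_n\mu_n(V)<\infty$ (automatic here, by testing against $1$), and it does not show that the limit is excessive, which the lemma does not need.
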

\begin{proof}
(i) By Corollary~\ref{green-young} for any bounded positive measure $\gamma$ on $\mathbb R^d$, we have 
\[
(R^V\gamma,1)=(\gamma,R^{*,V}1)\le \|R^{*,V}1\|_\infty\gamma(V)<\infty.
\]
The result follows from \cite[Proposition VI.2.3]{BG}.

(ii) By Deny's theorem  there exists a subsequence (not relabeled) 
and a $(P^V_t)$-excessive function $w$  such that $R^V\mu_n\to w$ a.e. 
Let $\xi\in \mathscr B_b(V)$. By the assumptions made  $R^{*,V}\xi\in C_b(V)$ (see also \cite[Theorem 3.4]{SW1}).
Thus 
\[
(R^{V}\mu_n,\xi)=(\mu_n,R^{*,V}\xi)\to (\mu,R^{*,V}\xi)=(R^{V}\mu,\xi).
\]
In other words, $R^{V}\mu_n$ converges weakly to $R^V\mu$ in $L^1(V)$.
By the Dunford-Pettis theorem the sequence $(R^{V}\mu_n)_{n\ge 1}$ is uniformly integrable.
As a result, by Vitali's theorem,  $w=R^V\mu$.
\end{proof}

\section{Deny's theorem and compactness of Green operators}
\label{sec4}
\begin{definition}
We say that a positive  function $u$ is $(P^D_t)$-{\em supermedian}  if the following condition is satisfied
\[
u(x) \geq \left[\alpha R^D_{\alpha}u\right](x),\quad x\in D.
\]
\end{definition}
\begin{theorem}[Deny's theorem, see \cite{deny}]
\label{th-denys}
Let $(u_n)$ be a sequence of  $(P^D_t)$-supermedian functions.
Then there exists a subsequence $(u_{n_k})$ and a $(P^D_t)$-excessive function $u$ such that 
$(u_{n_k})$ converges to $u$ almost everywhere in $D$.
\end{theorem}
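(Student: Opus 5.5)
The plan is the standard route from supermedian functions to excessive functions via potentials. First reduce to the excessive case, then use a compactness argument for potentials, and finally identify the limit through the fine (excessive) regularization.

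\textbf{Step 1: reduction to excessive functions.} For a $(P^D_t)$-supermedian function $u$, let $\hat u:=\lim_{\alpha\to\infty}\alpha R^D_\alpha u$; this limit exists and is increasing. By the resolvent equation, $\hat u$ is $(P^D_t)$-excessive, $\hat u\le u$, and $\hat u=u$ except on a set that is semipolar, in particular Lebesgue-null. Here we use that $(R^D_\alpha)$ has a density $G_{D,\alpha}$ with respect to $\ell^d$, so sets of potential zero are null. We may therefore replace $u_n$ by $\hat u_n$ and assume each $u_n$ is excessive.

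\textbf{Step 2: compactness.} Fix a strictly positive $\varphi\in L^1(D)$ with $\|R^{*,D}\varphi\|_\infty<\infty$; for example $\varphi\equiv 1$ works, since $R^{*,D}1$ is bounded by Corollary~\ref{green-young}. Put $w_n:=u_n\wedge R^D\varphi$ after a transform making $u_n$ bounded. Concretely, consider $v_n:=\arctan u_n$, or better $u_n\wedge m$, which is again excessive for each $m\in\mathbb N$. By a diagonal argument it suffices to treat uniformly bounded excessive $u_n\le m$. A bounded excessive function on a bounded $D$ is the increasing limit of potentials $R^D f_{n,k}$ with $f_{n,k}:=k(u_n-kR^D_k u_n)\ge0$.

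The key compactness is the following. The sequence $(u_n)$ is bounded in $L^\infty(D)\subset L^2(D)$, so along a subsequence $u_n\to u_0$ weakly in $L^2(D)$. Moreover $\alpha R^D_\alpha u_n\to \alpha R^D_\alpha u_0$ pointwise for every rational $\alpha$, since $G_{D,\alpha}(x,\cdot)\in L^1$, and with a diagonal extraction this holds for all rational $\alpha$ simultaneously. Define $u:=\lim_{\alpha\to\infty}\alpha R^D_\alpha u_0$. The supermedian property passes to the weak limit, so $u_0\ge \alpha R^D_\alpha u_0$ a.e., and $u$ is excessive.

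\textbf{Step 3 (main obstacle): upgrading weak to a.e.\ convergence.} I would show $\int_D|u_n-\alpha R^D_\alpha u_n|\,\varphi\to$ small uniformly in $n$ as $\alpha\to\infty$. By duality this integral equals $\int u_n\varphi-\int u_n\,\alpha R^{*,D}_\alpha\varphi$, and the smallness should come from the strong continuity of $\alpha R^{*,D}_\alpha$ on $L^1$, which holds uniformly against the bounded family $(u_n)$. Together with the pointwise convergence $\alpha R^D_\alpha u_n\to\alpha R^D_\alpha u_0$ (dominated by $m$), this gives $u_n\to u_0$ in $L^1(D)$. Hence a subsequence converges a.e., and $u_0=u$ a.e.

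Finally, remove the truncation. Take $m\to\infty$ diagonally, letting $u^{(m)}$ be the excessive limit of $u_n\wedge m$. Then $u^{(m)}$ increases in $m$, its limit is excessive, and it coincides a.e.\ with $\lim u_{n_k}$, allowing the value $+\infty$.
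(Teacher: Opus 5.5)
Your proposal is correct, but its key step differs from the paper's. Both arguments truncate, extract a $\sigma(L^\infty,L^1)$-convergent subsequence $u_n\to u_0$, and use $G_{D,\alpha}(x,\cdot)\in L^1(D)$ to get $\alpha R^D_\alpha u_n\to\alpha R^D_\alpha u_0$ at every point. Both then take $u:=\lim_{\alpha\to\infty}\alpha R^D_\alpha u_0$ as the excessive limit.

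The difference is in how weak convergence is upgraded to strong convergence.

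\textbf{The paper's route.} It uses a one-sided Fatou argument. From $\alpha R^D_\alpha u_n\le u_n$ it gets $\alpha R^D_\alpha u\le\liminf_n u_n$, hence $u\le\liminf_n u_n$ a.e. Then $w_n:=u_n-u$ satisfies $w_n^-\to0$ pointwise and $\int_D w_n\to0$, so $\int_D|w_n|=\int_D w_n+2\int_D w_n^-\to0$.

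\textbf{Your route.} You use duality to get a bound uniform in $n$,
\[
\sup_n\|u_n-\alpha R^D_\alpha u_n\|_{L^1(D)}=\sup_n\int_D u_n\,(1-\alpha R^{*,D}_\alpha 1)\le m\,\|1-\alpha R^{*,D}_\alpha 1\|_{L^1(D)},
\]
followed by an $\varepsilon/3$ argument. This is valid, but two points are only gestured at and should be stated.
\begin{enumerate}
\item You need $\alpha R^{*,D}_\alpha 1\to1$ in $L^1(D)$. One way: $\alpha R^{*,D}_\alpha 1(x)=1-\mathbb E^*_x e^{-\alpha\tau_D}\to1$ for $x\in D$, since $\tau_D>0$ a.s., and then dominated convergence on the bounded set $D$.
\item You need the third term $\|\alpha R^D_\alpha u_0-u_0\|_{L^1(D)}$ to be small. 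The same duality bound gives this, because $u_0$ is a.e.\ supermedian and bounded by $m$. This also yields $u_0=u$ a.e. Alternatively, run the estimate as a Cauchy argument in $L^1$.
\end{enumerate}

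\textbf{Comparison.} The paper's route is leaner: it needs nothing about the dual resolvent beyond duality. Yours gives a more quantitative statement: any family of supermedian functions bounded by $m$ is uniformly $L^1$-approximable by its resolvent regularizations, hence relatively compact in $L^1(D)$.

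\textbf{Side remarks.} Your Step 1 is unnecessary. $u_n\wedge m$ is supermedian whenever $u_n$ is, and your Step 3 uses only the supermedian inequality; the paper regularizes only the limit. Also, the claim that $\{\hat u\neq u\}$ is semipolar is false for general supermedian functions. If $A\subset D$ is Lebesgue-null but not semipolar, then $\mathbf 1_A$ is supermedian with $\widehat{\mathbf 1_A}=0$. What is true, and all you actually use, is that this set has potential zero. This follows from $R^D_\beta\hat u=R^D_\beta u$, which the resolvent equation gives for bounded $u$. The stray devices in Step 2 ($\arctan u_n$, $u_n\wedge R^D\varphi$, approximation by the potentials $R^Df_{n,k}$) play no role and can be dropped.
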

\begin{proof}
We first treat the case in which \((u_n)\) is uniformly bounded. 
The general assertion then follows by applying the bounded case to the truncations \((u_n\wedge k)\), \(k\in\mathbb N\), 
and using a diagonal subsequence argument. Since the resolvent is sub-Markovian, each \(u_n\wedge k\) remains supermedian. 
After normalization, we may assume that \(0\leq u_n\leq1\). By weak-star compactness of the unit ball of \(L^\infty(D)\), 
a subsequence, still denoted by \((u_n)\), converges in \(\sigma(L^\infty,L^1)\) to a measurable function \(\widetilde u\). 
Since \(G_{D,\alpha}(x,\cdot)\in L^1(D)\) (see Lemma~\ref{lm.duoper}), this convergence implies 
\[
R_\alpha^D u_n(x)\longrightarrow R_\alpha^D\widetilde u(x)
\]
for every \(x\in D\). Passing to the limit in the supermedian inequality shows that \(\widetilde u\geq\alpha R_\alpha^D\widetilde u\) 
almost everywhere. Proposition~2.4 of \cite{BCR} therefore implies that 
\[
u(x):=\lim_{\alpha\to\infty}\alpha R_\alpha^D\widetilde u(x)
\]
is excessive and agrees with \(\widetilde u\) almost everywhere.
It remains to show pointwise convergence of $(u_n)$.
Since $u_n$ converges weakly to $u$ and $\alpha R^D_{\alpha} u_n \le u_n$, we have
$$
\alpha R^D_{\alpha} u \le \liminf_{n\to\infty} u_n \quad \text{a.e.}
$$
Letting $\alpha \to \infty$, we establish that $u \le \liminf_{n\to\infty} u_n$ almost everywhere. 
Consider the sequence $w_n = u_n - u$. We know $w_n \to 0$ weakly in $\sigma(L^1, L^\infty)$ and $\liminf w_n \ge 0$.
We shall show that $w_n\to 0$ in $L^1(D)$. We write $|w_n|=w_n+2w_n^-$. The equality
\[
\int_D w_n(x)\ dx \to 0,\quad n\to\infty
\]
follows from weak convergence. On the other hand $\liminf w_n \ge 0$ implies that $w_n^-\to 0$ pointwise. 
By the dominated  convergence theorem 
\[
\int_D w^-_n(x)\ dx \to 0,\quad n\to\infty,
\]
which completes the proof.
\end{proof}

\begin{remark}
\label{rem.deny}
Observe that for any $f\in p\mathscr B(\mathbb R^d)$, the function $g:=R^D[f]$ is $(P^D_t)$-supermedian 
by the resolvent identity. Thus if we have the sequence $(f_n)\subset p\mathscr B(\mathbb R^d)$, 
Theorem~\ref{th-denys} guarantees the existence of a 
subsequence $R^D[f_{n_k}]$ convergent almost everywhere on $D$.
\end{remark}

\begin{lemma}
\label{green-l1}
For any $p\in [1,d_s)$ the operator $T_p: L^p(D)\mapsto L^p(D)$ given by $T_p(f):=R^D[|f|^p]$ is continuous and compact.
\end{lemma}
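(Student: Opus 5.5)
Throughout, $f\in L^p(D)$ gives $|f|^p\in L^1(D)$, and the idea is to view $T_p$ as the composition of the continuous Nemytskii map $f\mapsto |f|^p$ from $L^p(D)$ to $L^1(D)$ with the linear Green operator $R^D$, considered from $L^1(D)$ into $L^p(D)$. The exponent restriction $p<d_s$ enters through Lemma~\ref{lm.duoper}. Applying \eqref{eq.margr} to $\mu=|f|^p\,dx$ gives
\[
\|R^D[|f|^p]\|_{M^{d_s}}\le C\|f\|_p^p .
\]
Since $D$ is bounded and $p<d_s$, the Marcinkiewicz space $M^{d_s}(D)$ embeds continuously into $L^p(D)$. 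Hence $T_p$ maps $L^p(D)$ into $L^p(D)$ and sends bounded sets to bounded sets. The same embedding yields more: for any measurable $E\subset D$ and any $q\in(p,d_s)$,
\[
\int_E |R^D h|^p\le \ell^d(E)^{1-p/q}\,\|R^D h\|_q^p\lesssim \ell^d(E)^{1-p/q}\,\|h\|_1^p .
\]
This is uniform $p$-integrability of the images of bounded subsets of $L^1(D)$.

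\emph{Continuity.} Suppose $f_n\to f$ in $L^p(D)$. Then $|f_n|^p\to|f|^p$ in $L^1(D)$; this is standard, for instance via a.e. convergence along subsequences together with Vitali's theorem, or via the elementary inequality $||a|^p-|b|^p|\le p(|a|^{p-1}+|b|^{p-1})|a-b|$ and Hölder. By linearity and positivity of $R^D$,
\[
|T_pf_n-T_pf|\le R^D\bigl[\,\bigl||f_n|^p-|f|^p\bigr|\,\bigr].
\]
The Marcinkiewicz bound then gives $\|T_pf_n-T_pf\|_p\lesssim \bigl\||f_n|^p-|f|^p\bigr\|_1\to0$.

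\emph{Compactness.} Let $(f_n)$ be bounded in $L^p(D)$ and set $u_n:=R^D[|f_n|^p]$. By Remark~\ref{rem.deny}, i.e. Deny's theorem (Theorem~\ref{th-denys}), some subsequence $u_{n_k}$ converges almost everywhere on $D$ to a $(P^D_t)$-excessive function $u$. The uniform $p$-integrability estimate above, with $h=|f_{n_k}|^p$ bounded in $L^1$, shows that $(|u_{n_k}|^p)$ is uniformly integrable on the finite-measure set $D$. Fatou's lemma gives $u\in L^p(D)$. Vitali's convergence theorem then yields $u_{n_k}\to u$ in $L^p(D)$, so $T_p$ is compact.

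The main step is the $L^p$-upgrade of the a.e. convergence supplied by Deny's theorem. This is exactly where $p<d_s$ is needed, since it provides the room $q\in(p,d_s)$ for the equi-integrability estimate. At $p=d_s$ only a weak-type bound is available and the argument breaks down. Everything else is routine.
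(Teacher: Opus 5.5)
Your proof is correct and follows essentially the same route as the paper. In both, an $L^1\to L^q$ bound for $R^D$ with $q<d_s$ gives continuity and uniform integrability of $\{(T_pf)^p\}$ on bounded sets, and then Remark~\ref{rem.deny} (Deny's theorem) together with Vitali's theorem upgrades a.e.\ convergence to convergence in $L^p$. The only difference is that you obtain the $L^1\to L^q$ bound directly from the Marcinkiewicz estimate \eqref{eq.margr} of Lemma~\ref{lm.duoper} and the embedding $M^{d_s}(D)\hookrightarrow L^q(D)$. The paper instead cites Corollary~\ref{green-young}, whose stated range ($q>1$) does not literally cover data $|f|^p\in L^1(D)$, so your justification is, if anything, slightly more careful.
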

\begin{proof}
By Corollary~\ref{green-young}, we deduce that $T_p$ is continuous and 
for any bounded  $B\subset L^p(D)$  the family $\{(T_pf)^p: f\in B\}$ is uniformly integrable.
This, together with Remark~\ref{rem.deny},  proves the assertion. 
\end{proof}

\section{Distributional and classical  solutions}
\label{sec5}

\begin{definition}
For $\delta>0$ and open set $D$ we put
\[
o_\delta(D):=\{x\in D\ |\ \text{dist}(x,D^c)>\delta\}.
\]
\end{definition}

\subsection{Distributional solutions}

When studying   equations like   \eqref{eq1.0} in the sense of distributions 
one has to  define  $Lu$  properly as a distribution on $D\setminus\{0\}$.  
In contrast  to  local operators, it is not enough  to impose  local integrability of $u$ on $D\setminus\{0\}$
but rather one needs control of the integrability properties of $u$ on the whole $\mathbb R^d$. 
In order to have $Lu$ well defined as a distribution, we introduce below the space $\mathscr T_\nu(V)$
that depends on the L\'evy measure $\nu$ in \eqref{L}.

\begin{definition}
\label{def.space1}
Fix an open set $V\subset \mathbb R^d$.
For any compact $K\subset V$, we let   $r_{K,V}:=(2\text{dist}(K,V^c))\wedge 1$ and 
\[
\rho_{K,V}(x):= \nu(B^c_{r_{K,V}}\cap (K-x))+\nu(B^c_{r_{K,V}}\cap (x-K)).
\]
 We further define the linear space 
\[
\mathscr T_\nu(V):= \bigcap _{K\subset\subset V} L^1_{\rho_{K,V}}(\mathbb R^d).
\]
\end{definition}

\begin{remark}
Observe that in case $\lambda(d \theta) \asymp \sigma(d \theta)$ (see \eqref{eq.specme}) 
or, equivalently, 
$$
\nu(dz) \asymp|z|^{-d-2 s}\, d z,
$$
we have
$$
\mathscr{T}_\nu(V) =  L^1(\mathbb{R}^d; (1+|x|)^{-d-2 s} d x).
$$
On the other hand, in the case of the symmetric stable cylindrical process, i.e. if $\psi(\xi)=|\xi_1|^{2s}+ \dots +|\xi_d|^{2s}$ (see \eqref{eq.symb1}-\eqref{eq.symb2}), we have 
\[
\rho_{K,V}(x) \asymp \sum_{j=1}^d \frac{\ell^1\left(K_{x, j}\right)}{\left(1+\left|x_j\right|\right)^{1+2 s}},
\]
where $K_{x, j}:=\{t \in \mathbb{R}: x+t e_j \in K\}$.
Consequently, there is no inclusion between the corresponding spaces $\mathscr T_\nu(V)$.
\end{remark}

\begin{remark}
\label{rem.incl123}
We record two elementary consequences of the definition of
$\mathscr T_\nu(V)$.
First, for every open set $V\subset \mathbb R^d$ one has the following inclusions
\[
L^1(\mathbb R^d)\subset  \mathscr T_\nu(V)\subset L^1_{\mathrm{loc}}(V),
\]
if one considers $L^1_{\mathrm{loc}}(V)$ as a space of measurable functions  on  $\mathbb R^d$
that are locally integrable on $V$. Second, if $V\subset\mathbb R^d$ is open and bounded, and  $x_0\in V$, then
\[
\mathscr T_\nu(V\setminus\{x_0\})\subset L^1_{\mathrm{loc}}(V).
\]
Indeed, the first inclusion is trivial since each $\rho_{K,V}$ is bounded.
Next, for any $C\subset\subset  V$ one can find compact $K\subset V$ and 
$R>r_{K,V}$ such that $\{z: r_{K,V}<|z|<R\}\subset B^c_{r_{K,V}}\cap (K-x)$ for $x\in C$.
Consequently, for $x\in C$,
\[
\begin{split}
\rho_{K,V}(x)&\ge \nu(B^c_{r_{K,V}}\cap (K-x))\ge \nu\bigl(\{z\in\mathbb R^d:r_{K,V}<|z|<R\}\bigr)=\lambda(\mathbb S^{d-1})\int_{r_{K,V}}^{R}\frac{dr}{r^{1+2s}}, 
\end{split}
\]
which establishes the second inclusion of the first assertion. 
As to the second one, it remains only to control the possible singularity at $x_0$. Choose
$a>0$ so small that $\overline B(x_0,5a)\subset V$ and $2a<1$, and put
\[
        K:=\{y\in\mathbb R^d: a\le |y-x_0|\le 4a\}.
\]
Then $K\subset\subset V\setminus\{x_0\}$ and $r_{K,V}=2a$. 
If $x\in B(x_0,a/4)$, then for every $\theta\in\mathbb S^{d-1}$ and every
$r\in(9a/4,3a)$ we have
\[
x+r\theta\in K.
\]
Hence
\[
\begin{split}
\rho_{K,V}(x)&\ge \nu(B^c_{2a}\cap (K-x))\ge\lambda(\mathbb S^{d-1})\int_{9a/4}^{3a}\frac{dr}{r^{1+2s}},\quad x\in B(x_0,a/4).
\end{split}
\]
Thus every $u\in\mathscr T_\nu(V\setminus\{x_0\})$ is integrable in a
neighbourhood of $x_0$. 
\end{remark}

\begin{lemma}
\label{lm.test1}
Let   $u\in\mathscr T_\nu(V)$.  Then 
\[
\int_{\mathbb R^d}|u|\,|L\eta|<\infty,\quad \eta\in C^2_c(V).
\]
\end{lemma}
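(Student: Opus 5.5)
Fix $\eta\in C^2_c(V)$ and let $K:=\operatorname{supp}\eta$, a compact subset of $V$. Put $r:=r_{K,V}\in(0,1]$. Note that $r\le 2\,\mathrm{dist}(K,V^c)$ need not make the $r$-neighbourhood of $K$ lie in $V$. To get a compact set inside $V$, I will use $K_r:=\{x:\mathrm{dist}(x,K)\le r/4\}$, which is compact in $V$ because $r/4<\mathrm{dist}(K,V^c)$; $K_r$ could be replaced by any compact neighbourhood of $K$ in $V$. The plan is to split $\mathbb R^d$ into the region near $K$ and the far region, and on each piece to bound $|L\eta|$ by something integrable against $|u|$ using $u\in\mathscr T_\nu(V)$.

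\emph{Near region.} For $x\in K_r$, I estimate $|L\eta(x)|$ crudely. The drift term is bounded by $|b|\,\|\nabla\eta\|_\infty$. I split the integral in \eqref{L} at $|y|<1$ and $|y|\ge1$. By Taylor's formula the small-jump part is at most $\tfrac12\|D^2\eta\|_\infty\int_{B_1}|y|^2\nu(dy)$. The large-jump part is at most $2\|\eta\|_\infty\nu(B_1^c)$. Both quantities are finite by \eqref{eq.specme}, so $L\eta$ is bounded on $K_r$. By Remark~\ref{rem.incl123}, $u\in L^1_{\rm loc}(V)$, hence $\int_{K_r}|u||L\eta|<\infty$.

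\emph{Far region.} For $x\notin K_r$ we have $\eta(x)=0$ and $\nabla\eta(x)=0$. Also $\eta(x+y)\ne0$ forces $x+y\in K$, i.e. $y\in K-x$. Thus
\[
|L\eta(x)|\le \|\eta\|_\infty\,\nu(K-x).
\]
Since $|y|\ge \mathrm{dist}(x,K)> r/4$ for $y\in K-x$, the set $K-x$ lies in $B^c_{r/4}$ rather than in $B^c_r$.

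\emph{Main obstacle.} The difficulty is to compare $\nu(B^c_{r/4}\cap(K-x))$ with the weight $\rho_{K',V}$ for some compact $K'$, because the definition of $\rho$ uses the radius $r_{K',V}$ attached to $K'$. I would try to choose $K'$ so that $r_{K',V}$ is at most the distance from the points $x$ in question to $K$.

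\emph{Dealing with the obstacle.} I subdivide once more. First take $x$ with $\mathrm{dist}(x,K)\ge r$. Then $K-x\subset B^c_r$, so $\nu(K-x)\le\rho_{K,V}(x)$ and
\[
\int_{\{\mathrm{dist}(\cdot,K)\ge r\}}|u||L\eta|\le\|\eta\|_\infty\int|u|\rho_{K,V}<\infty.
\]
Next take $x$ with $r/4<\mathrm{dist}(x,K)<r$. These $x$ lie in a bounded set, and I will show that $\nu(K-x)$ is bounded there. Indeed $K-x\subset B^c_{r/4}$, so $\nu(K-x)\le\nu(B^c_{r/4})=\lambda(\mathbb S^{d-1})(r/4)^{-2s}/(2s)$. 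It remains to see that $u$ is integrable on this bounded shell, which need not be contained in $V$. Here I use $\rho_{K,V}$ again: on a bounded set $\{\mathrm{dist}(\cdot,K)<r\}$, I want $\rho_{K,V}$ bounded below. Points of this shell inside $V$ are covered by $L^1_{\rm loc}(V)$ via a slightly larger compact. For points outside $V$, I use the fact that $\nu(B^c_r\cap(K-x))\ge \lambda(\mathbb S^{d-1})\int_{R_1}^{R_2}t^{-1-2s}dt$ whenever $K$ contains a full spherical shell around $x$. Such a shell can be arranged by enlarging $K$ to a compact $K''\supset K$ that contains a thick annulus, in the same way as in Remark~\ref{rem.incl123}.

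If this enlargement becomes awkward, I would follow the route of the second inclusion in Remark~\ref{rem.incl123}. The idea there is that $\rho_{K'',V}$, for a suitable large compact $K''\subset V$, is bounded below on bounded sets, which gives $u\in L^1$ on each bounded set near $V$. Combining the three regions yields $\int|u||L\eta|<\infty$.
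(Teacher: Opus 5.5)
Your near-region estimate and your treatment of $\{\mathrm{dist}(\cdot,K)\ge r\}$ are correct. The gap is the intermediate shell $S:=\{r/4<\mathrm{dist}(\cdot,K)<r\}$. There you replace the weight $\nu(K-x)$ by its supremum, and then you need $u\in L^1(S)$.

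Since $r$ can be as large as $2\,\mathrm{dist}(K,V^c)$, the shell $S$ can leave $V$. Membership in $\mathscr T_\nu(V)$ gives no unweighted integrability there. Even on $S\cap V$, the space $L^1_{\rm loc}(V)$ does not help by itself, because this set need not be relatively compact in $V$.

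The enlargement you propose does not exist in general. A full spherical shell around a point $x\notin V$ need not fit inside $V$ (take $V$ a ball). Remark~\ref{rem.incl123} only gives lower bounds for $\rho$ at points of $V$.

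In fact the claim $u\in L^1(S)$ is false. Take $d=2$ and let $\lambda$ be the sum of unit masses at $\pm e_1,\pm e_2$ (the cylindrical case). Let $V=B_{2\delta}$ with $\delta\le1/2$, and $\operatorname{supp}\eta=K=\overline{B_\delta}$. Then $r=2\delta$ and $S=\{3\delta/2<|x|<3\delta\}$.
\begin{itemize}
\item The open square $U=(2.01\delta,2.11\delta)^2$ lies in $S$.
\item Every axis-parallel line through a point of $U$ misses $V$.
\item Since $\nu$ is carried by the axes, $\rho_{K',V}\equiv0$ on $U$ for every compact $K'\subset V$.
\end{itemize}
Thus $u=\mathbf 1_U|x-c|^{-2}$, with $c$ the centre of $U$, belongs to $\mathscr T_\nu(V)$ but not to $L^1(S)$, and your bound on the shell is infinite. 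The lemma itself survives because $L\eta=0$ on $U$; the point is that you cannot discard the weight.

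The fix is exactly the idea you formulated under ``Main obstacle'' and then abandoned. If $V\neq\mathbb R^d$, pick $z_0\in V$ with $\mathrm{dist}(z_0,V^c)\le r/8$ and put $K':=K\cup\{z_0\}$. Then $r_{K',V}\le r/4$. So for every $x\notin K_r$ you have $K-x\subset B^c_{r/4}\subset B^c_{r_{K',V}}$, and hence
\[
|L\eta(x)|\le\|\eta\|_\infty\,\nu\bigl(B^c_{r_{K',V}}\cap(K'-x)\bigr)\le\|\eta\|_\infty\,\rho_{K',V}(x).
\]
This disposes of the entire far region at once, with no subdivision. If $V=\mathbb R^d$, then $S$ is a bounded subset of $V$ and $L^1_{\rm loc}$ suffices.

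This is essentially the paper's proof. With $r=\mathrm{dist}(\operatorname{supp}\eta,V^c)$, it bounds $|L\eta|$ by a bounded term compactly supported in $V$ plus $\|\eta\|_\infty\,\nu(B^c_{r/2\wedge1}\cap(\operatorname{supp}\eta-x))$. It then appeals to the definition of $\mathscr T_\nu(V)$, implicitly with such an enlarged compact.
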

\begin{proof}
Fix  $\eta\in C_c^2(V)$ and let    $r:=\text{dist}(supp[\eta],V^c)$. 
Observe that for any $x\in V$,
\[
\begin{split}
\left|\int_{B_{\frac r2\wedge 1}}(\eta(x)-\eta(x+y)-y\nabla\eta(x))\,\nu(dy)\right|&
\le \int_{B_{\frac r2\wedge1}}|y|^2\sup_{\theta\in [0,1]}|D^2\eta(x+\theta y)|\,\nu(dy)
\\&
\le \sup_{z\in B(x,1)}|D^2\eta(z)| \int_{B_{1}}|y|^2\,\nu(dy).
\end{split}
\]
On the other hand  
\[
\begin{split}
&\left|\int_{B^c_{\frac r2\wedge 1}}(\eta(x)-\eta(x+y)-y\nabla\eta(x))\,\nu(dy)\right|
\le c_\nu |\nabla\eta(x)| + \left|\int_{B^c_{\frac r2\wedge1}}(\eta(x)-\eta(x+y))\,\nu(dy)\right|\\&
\qquad\qquad\qquad\qquad\le c_\nu |\nabla\eta(x)|+  \nu(B^c_{\frac r2\wedge1})\eta(x)+\int_{B^c_{\frac r2\wedge1}}|\eta(x+y)|\,\nu(dy).
\end{split}
\]
Consequently, for $x\in\mathbb R^d$,
\[
|L\eta(x)|\le c_\nu\left( \sup_{z\in B(x,1)}|D^2\eta(z)|+|\nabla\eta(x)|+\eta(x)\right)+\|\eta\|_\infty \nu(B^c_{\frac r2\wedge1}\cap (\text{supp}[\eta]-x)).
\]
The first term  on the right-hand side of the above inequality is  compactly supported in $V$, and 
so multiplied by $u$ is integrable. 
The second term multiplied by $u$ is integrable by the very definition of the space  $\mathscr T_\nu(V)$.
\end{proof}

Let $V$ be an open  subset of $\mathbb R^d$ and $\mu$ be a Radon measure on $V$. 
Consider the equation
\begin{equation}
\label{eq1.00}
-Lu=\mu\quad \text{in } V.
\end{equation}

\begin{definition}\label{distributional-solution}
We say that $u\in \mathscr T_\nu (V)$ is a  {\em distributional solution} 
to \eqref{eq1.00} provided that 
\begin{equation}
\label{eq1.3}
-\int_{\mathbb R^d}u \,L^*\eta=\int_{\mathbb R^d}\eta\,d\mu,\quad\eta\in \mathscr D(V).
\end{equation}
We then write $-Lu=\mu$ in $\mathscr D'(V)$.
\end{definition}

\begin{remark}
Observe that the integral on the right-hand side of \eqref{eq1.3} is in fact over $V$, 
while the one on the left-hand side, over the whole space $\mathbb R^d$. In the next remark 
we show that the integral $\int_{V^c}u \,L^*\eta$ as a functional of $\eta\in C^2_c(V)$ is positive,
and as a result is a positive measure that we denote by $\nu^{*,V}_u$. Thus, we equivalently have 
\begin{equation}
\label{eq1.3e}
-\int_{V}u \,L^*\eta=\int_{V}\eta\,d(\mu+\nu^{*,V}_u),\quad\eta\in \mathscr D(V).
\end{equation}
\end{remark}

\begin{remark}
\label{rem.imp}
Define the measure $\nu^*_x(B):=\nu(x-B)$, $x\in\mathbb R^d$ and $B\in\mathcal B(\mathbb R^d)$.
We further define the positive Radon measure $\nu^{*,V}_u(dy)$ on $V$ by 
\begin{equation}
\label{eq.defnust}
\nu^{*,V}_u(dy):= \int_{V^c}u(x)\nu^*_x(dy)\,dx.
\end{equation}
Now, notice that
\[
\int_{\mathbb R^d}u \,L^*\eta=\int_{V}u \,L^*\eta+\int_V \eta\,d\nu^{*,V}_u.
\]
Consequently, by Lemma~\ref{lm.test1}, $\nu^{*,V}_u$ is a positive Radon measure on $V$.
Moreover, $\nu^{*,V}_u(K)=0$ whenever $K$ is a closed polar subset of $V$
(see  \cite[Lemma 2.16]{BSW}; see also Proposition~\ref{prop.green}(3) for the definition of a polar set).
\end{remark}

The following two results will be needed later. We denote $\nu_x(dy):=\nu^*_x(-dy)$.

\begin{lemma}
\label{lm.ch1}
Let $u, v\in p\mathscr B(E)$ and
$supp[u]\cap supp[v] = \emptyset$. Then
\begin{equation}
\label{eq2.weq1}
\int_{\mathbb R^d}\int_{\mathbb R^d} u(x)v(y)\nu_x(dy)\,dx=
\int_{\mathbb R^d}\int_{\mathbb R^d} u(y)v(x)\nu^*_x(dy)\,dx.
\end{equation}
\end{lemma}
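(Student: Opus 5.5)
The identity follows from unwinding the definitions of $\nu_x$ and $\nu^*_x$ and applying translation invariance of Lebesgue measure via Tonelli's theorem. The disjointness of supports is used only to ensure that the diagonal $\{x=y\}$, where $\nu$ is infinite near the origin, does not contribute. Thus the integrands are supported away from the singularity and all quantities are well-defined elements of $[0,\infty]$.

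First I identify the two families of measures. By definition $\nu^*_x(B)=\nu(x-B)$, so for positive Borel $f$,
\[
\int f(y)\,\nu^*_x(dy)=\int f(x-z)\,\nu(dz).
\]
Since $\nu_x(dy)=\nu^*_x(-dy)$, we get $\nu_x(B)=\nu^*_x(-B)=\nu(x+B)$. Here I read the convention so that $\nu_x$ is the jump kernel of $L$, namely
\[
\int f(y)\,\nu_x(dy)=\int f(x+z)\,\nu(dz),
\]
which is the form appearing in \eqref{L}. (With the literal reading one obtains $\nu(x+B)$ and hence $\int f(z-x)\,\nu(dz)$. In that case the same computation below works after replacing $z$ by $-z$ consistently on both sides, so I will fix the interpretation matching \eqref{L} and check consistency of signs carefully.)

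Next I write the left-hand side of \eqref{eq2.weq1} as
\[
\int_{\mathbb R^d}\int_{\mathbb R^d} u(x)v(x+z)\,\nu(dz)\,dx.
\]
All integrands are positive, so Tonelli's theorem lets me interchange the integrals, giving
\[
\int_{\mathbb R^d}\Bigl(\int_{\mathbb R^d} u(x)v(x+z)\,dx\Bigr)\nu(dz).
\]
For fixed $z$, the substitution $x\mapsto x-z$, which preserves Lebesgue measure, turns the inner integral into $\int u(x-z)v(x)\,dx$. Applying Tonelli again, the whole expression becomes
\[
\int_{\mathbb R^d} v(x)\int_{\mathbb R^d} u(x-z)\,\nu(dz)\,dx=\int_{\mathbb R^d}\int_{\mathbb R^d}u(y)v(x)\,\nu^*_x(dy)\,dx,
\]
which is the right-hand side of \eqref{eq2.weq1}.

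The main point requiring care is bookkeeping of signs in the definitions of $\nu_x$ and $\nu^*_x$, so that the translated variable matches on both sides. A secondary technical point is measurability of $(x,z)\mapsto u(x)v(x\pm z)$ with respect to the product $\sigma$-algebra for Borel $u,v$. This holds because the map is a composition of a Borel function with a continuous map, so Tonelli's theorem applies even though $\nu$ is only $\sigma$-finite. No finiteness is needed, since the identity is between elements of $[0,\infty]$, and the disjointness hypothesis is not even required for this formal computation.
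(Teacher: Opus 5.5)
Your argument is correct under the jump-kernel reading of $\nu_x$, and it takes a different, more elementary route than the paper.

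\textbf{Comparison with the paper.} The paper verifies the identity for $u,v\in C_c^2(\mathbb R^d)$ with disjoint supports. There it is just the $L$--$L^*$ duality on $C_c^2$: disjointness kills the drift, the compensator and the diagonal terms, so only the pure jump integrals remain. The paper then extends to positive Borel functions by a monotone class argument. You instead compute directly. Tonelli's theorem applies because $\nu$ is $\sigma$-finite with $\nu(\{0\})=0$ and $(x,z)\mapsto u(x)v(x+z)$ is Borel. Together with translation invariance of Lebesgue measure, this gives the identity in $[0,\infty]$ for all positive Borel $u,v$.

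Your route is self-contained and skips the monotone-class extension. That extension needs some care, since the approximants must keep their supports apart. It also shows that the disjointness hypothesis is superfluous for the identity itself. The paper's route instead shows that the lemma is the integrated form of the duality, which is how it is used in Remark~\ref{rem.imp} and Lemma~\ref{lm.hr1}.

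\textbf{Correction to your parenthetical remark.} With the literal definition $\nu_x(B)=\nu^*_x(-B)=\nu(x+B)$, the identity is not rescued by a consistent sign change; it is false. Take $d=1$, $\nu(dz)=|z|^{-1-2s}\,dz$, $u=\mathbf 1_{[0,1]}$ and $v=\mathbf 1_{[2,3]}$. Under the literal reading the left side equals $\int_0^1\int_{x+2}^{x+3}|z|^{-1-2s}\,dz\,dx$. The right side equals $\int_0^1\int_{t+1}^{t+2}|z|^{-1-2s}\,dz\,dt$. The latter is strictly larger because the density is strictly decreasing on $(0,\infty)$. So the reading $\nu_x(B)=\nu(B-x)$, the jump kernel of $L$, is forced rather than merely convenient. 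It is also the only reading consistent with the paper's one-line proof and with the Ikeda--Watanabe formula in Lemma~\ref{lm.hr1}. You should state it as the definition you are using instead of claiming the other reading works too.
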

\begin{proof}
The asserted  equality for $u, v\in C_c^2(\mathbb R^d)$ is nothing more than  
\[
(Lu,v)=(u,L^*v). 
\]
Now, the general result is a consequence of the monotone class theorem.
\end{proof}

\begin{lemma}
\label{lm.hr1}
Let $V\subset\subset D$ be  an  open set,   and $u\in p\mathscr B(\mathbb R^d)$. 
Then
\[
R^{V}\nu^{*,D}_u(x)=\mathbb E_x [\mathbf1_{D^c} u(X_{\tau_V})],\quad x\in V.
\]
\end{lemma}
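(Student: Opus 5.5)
The identity is an Ikeda--Watanabe-type formula: the exit position of $X$ from $V$ is reached by a jump, and the killed Green function weighted by the Lévy kernel gives the distribution of that jump. The plan is to compute both sides explicitly and match them via the Lévy system of $X$.

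\emph{Step 1: expand the left-hand side.} By \eqref{dual.res2} and \eqref{eq.defnust},
\[
R^V\nu^{*,D}_u(x)=\int_V G_V(x,y)\int_{D^c}u(z)\,\nu^*_z(dy)\,dz .
\]
Unfolding $\nu^*_z(B)=\nu(z-B)$ and using Tonelli (everything is positive), this becomes
\[
\int_V G_V(x,y)\int_{\mathbb R^d}\mathbf 1_{D^c}(y+w)\,u(y+w)\,\nu(dw)\,dy .
\]
The exact way $\nu^*_z$ unfolds (a reflection $w\mapsto -w$) has to be checked against the definition; I would use Lemma~\ref{lm.ch1} to fix the orientation so that the inner integral reads $\int \mathbf 1_{D^c}u(y')\,\nu_y(dy')$. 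That is, the jump goes from $y\in V$ to $y'\in D^c$ with the kernel of $L$ rather than $L^*$.

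\emph{Step 2: the Lévy system formula.} For $f\in p\mathscr B(V\times\mathbb R^d)$ vanishing on the diagonal, the jump structure of $X$ gives
\[
\mathbb E_x\sum_{t\le\tau_V}f(X_{t-},X_t)=\mathbb E_x\int_0^{\tau_V}\int f(X_t,X_t+w)\,\nu(dw)\,dt .
\]
By the definition of $G_V$ in \eqref{eq.rdef}--\eqref{eq.GGG}, the right-hand side equals $\int_V G_V(x,y)\int f(y,y+w)\,\nu(dw)\,dy$. I would take $f(y,z)=\mathbf 1_V(y)\mathbf 1_{D^c}(z)u(z)$.

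Because $V\subset\subset D$, any passage from $V$ into $D^c$ must be a jump of length at least $\operatorname{dist}(V,D^c)>0$. So the sum has at most one nonzero term, namely the one at $t=\tau_V$ when $X_{\tau_V}\in D^c$, and $X_{\tau_V-}\in \overline V$. The sum therefore equals $\mathbf 1_{D^c}(X_{\tau_V})u(X_{\tau_V})$, provided $X_{\tau_V-}\in V$.

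\emph{Step 3: boundary issues (main obstacle).} The delicate points are:
\begin{itemize}
\item the possibility $X_{\tau_V-}\in\partial V$ while $X_{\tau_V}\in D^c$;
\item the case $\tau_V=\infty$;
\item justifying the Lévy system formula up to the stopping time $\tau_V$ for general positive Borel $u$.
\end{itemize}
For the first point, I would apply the Lévy system formula with the stopping time $\tau_V$ and note that the integrand is evaluated at $X_{t-}$ for $t\le\tau_V$. Since $X_{t}\in V$ for $t<\tau_V$, we get $X_{t-}\in V$ for $t<\tau_V$. At $t=\tau_V$ the compensator gives weight to the set $\{X_{\tau_V-}\in\partial V,\ \Delta X_{\tau_V}\neq0\}$ only through $\int_0^{\tau_V}\mathbf 1_{\partial V}(X_t)\,dt$. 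This is zero a.s., since $X_t\in V$ on $[0,\tau_V)$. So the compensated identity with $f$ supported on $V\times D^c$ captures exactly the exit jumps.

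For the second point, $\tau_V<\infty$ a.s. because $V$ is bounded. For the third, I would first take $u$ bounded, continuous and compactly supported in the open set $\overline D^{\,c}$, and then pass to general $u\in p\mathscr B$ by a monotone class argument. Both sides are positive measures in $u$, which makes that extension routine; the sets $\partial D$ are handled since $\mathbf 1_{D^c}$ appears on both sides.
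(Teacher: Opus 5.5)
Your proposal is correct and follows essentially the paper's route. The paper likewise applies the Ikeda--Watanabe (L\'evy system) formula, writing $\mathbb E_x[\mathbf 1_{D^c}u(X_{\tau_V})]=\int_{D^c}u(y)\int_V G_V(x,z)\,\nu_z(dy)\,dz$, and then matches this with the expanded left-hand side via Lemma~\ref{lm.ch1}; that matching is the same translation/Tonelli computation as your Step~1, whose formula is already correctly oriented.

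Your Step~3 is more careful than the paper in ruling out exit jumps from $\partial V$ at $\tau_V$. Two small corrections there:
\begin{itemize}
\item For $t<\tau_V$ one only has $X_{t-}\in\overline V$, not $X_{t-}\in V$. This is harmless, because $X_t\in V\subset D$ already makes those terms vanish.
\item The monotone class extension is unnecessary, since the L\'evy system formula holds directly for all positive Borel integrands.
\end{itemize}
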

\begin{proof}
The left-hand side of the asserted identity equals
\[
\int_VG_V(x,y)\,\nu^{*,D}_u(dy)= \int_V\int_{D^c} G_V(x,y)u(z)\,\nu^{*}_z(dy)\,dz.
\]
On the other hand, by the Ikeda-Watanabe formula (see \cite{IW}) the right-hand side of the asserted identity equals 
\[
\int_{D^c}u(y)\, P_V(x,dy)=\int_{D^c}u(y)\int_V G_V(x,z)\,\nu_z(dy)\,dz,
\]
where $P_V$ is the Poisson kernel.
Now, the result follows from Lemma~\ref{lm.ch1}.
\end{proof}

\subsection{Classical solutions}

Let $f\in p\mathscr B(\mathbb R^d)$.
Consider the equation
\begin{equation}
\label{eq.class}
-Lu=f\quad\text{in }D.
\end{equation} 

\begin{definition}
We say that  $u\in p\mathscr B(\mathbb R^d)$ is a classical solution to 
\eqref{eq.class}  if $u\in C^1(D)$,
and 
\[
\int_{\mathbb R^d}|u(x+y)-u(x)-y\nabla u(x)\mathbf1_{B(0,1)}(y)|\,\nu(dy)<\infty,\quad -Lu(x)=f(x),\quad x\in D.
\]
\end{definition}

\begin{remark}
\label{rem.cldist}
Observe that for  $f\in L^1_{loc}(D)$,  if $u\in p\mathscr B(\mathbb R^d)\cap \mathscr T_\nu(D)$ is a classical solution to 
\eqref{eq.class}, then $u$ is a distributional solution to \eqref{eq.class}.
Indeed, first note that $-L(u\ast j_\varepsilon)=-j_\varepsilon\ast (Lu)=j_\varepsilon\ast f$ in $o_\varepsilon(D)$.
Thus, for $\eta\in C_c^\infty(o_\varepsilon(D))$, by using Lemma~\ref{lm.test1}, 
\[
(j_\varepsilon\ast f,\eta)=-(L(j_\varepsilon\ast u),\eta)= -(j_\varepsilon\ast u,L^*\eta).
\]
Therefore, $(j_\varepsilon\ast f,\eta)= -(j_\varepsilon\ast u,L^*\eta)$, $\eta\in C_c^\infty(o_\varepsilon(D))$,
and letting $\varepsilon\to 0$ gives the claim.

 In particular. if $0\in D$, $u\in C^1(D\setminus\{0\})\cap \mathscr T_\nu(D\setminus\{0\})$ is a classical solution to 
\[
-Lu=u^p\quad\text{in }D\setminus\{0\},
\] 
then $-Lu=u^p$ in $\mathscr D'(D\setminus\{0\})$.
\end{remark}

\begin{definition}
Let $0\in D$. We say that  $u\in p\mathscr B(\mathbb R^d)$ is a classical solution to 
\begin{equation}
\label{eq1.1class1}
\begin{cases}
-Lu=u^p,\quad &\text{in}\quad D\setminus\{0\}, \\
u=0,\quad &\text{in}\quad\mathbb{R}^d\setminus D,
\end{cases}
\end{equation}
if $u\in C^1(D\setminus\{0\})\cap C(\mathbb R^d\setminus\{0\})$,
and 
\[
\int_{\mathbb R^d}|u(x+y)-u(x)-y\nabla u(x)\mathbf1_{B(0,1)}(y)|\,\nu(dy)<\infty,\quad -Lu(x)=u^p(x),\quad x\in D\setminus\{0\}.
\]
\end{definition}

\section{Isolated   singularities for semilinear equations}
\label{sec6}

We begin with a comment on the relation between the present argument and the B\^ocher-type 
theorem obtained by the second author in \cite{KlimsiakBocher2026} for elliptic equations driven by 
drift-perturbed L\'evy operators. The result proved below is analogous in spirit, but the proof is different and considerably 
more direct. This simplification is made possible by the fact that \(L\) is the generator of a strictly stable L\'evy process. 
In particular, the operator is translation invariant and enjoys an exact scaling property, which allows us to work directly 
with mollifications, killed resolvents, and the Riesz decomposition. By contrast, the operator considered in 
\cite{KlimsiakBocher2026} contains an \(x\)-dependent drift term and therefore lacks these structural features, 
making it necessary to use a more abstract potential-theoretic framework.

In this section  we assume that $0\in D$ and we let $D_*:=D\setminus\{0\}$.

\begin{definition}
We say that a $(P^V_t)$-excessive   function $h$  is $(P^V_t)$-harmonic provided that for any compact $K\subset V$, we have
\begin{equation}
\label{eq.har}
\mathbb E_x[(\mathbf1_{V}h)(X_{\tau_K})]=h(x),\quad x\in K.
\end{equation}
\end{definition}

\begin{remark}
\label{rem.harmf}
Observe that for any Borel set $B\subset\subset V$ and  $(P^V_t)$-harmonic
function $h$ one has \eqref{eq.har} with $K$ replaced by $B$. Indeed, since $h$
is $(P^V_t)$-excessive, $\mathbb E_x[(\mathbf1_Vh)(X_{\tau_B})]\le h(x),\, x\in V$ (see \cite[Theorem III.5.7]{BG}). 
On the other hand, by $(P^V_t)$-excessiveness of $h$ again  and \eqref{eq.har}
with $K=\overline{B}$, one concludes (see \cite[Theorem III.5.7]{BG} again)
\[
\mathbb E_x[(\mathbf1_Vh)(X_{\tau_B})]\ge \mathbb E_x[(\mathbf1_Vh)(X_{\tau_K})]=h(x),\quad x\in V. 
\]

\end{remark}

\begin{proposition}
\label{prop.harm}
Let $V$ be an open bounded subset of $\mathbb R^d$. 
Suppose that $h\in L^1(V)$ is  positive.
Then $h$  is   $(P^V_t)$-harmonic  if and only if  for any $\eta\in C^2_c(V)$,
\begin{equation}
\label{eq.tharm}
\int_{V}hL^*\eta=0.
\end{equation}
\end{proposition}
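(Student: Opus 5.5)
For the direction ``harmonic $\Rightarrow$ \eqref{eq.tharm}'' I would fix $\eta\in C^2_c(V)$ and an open set $U$ with $\operatorname{supp}\eta\subset U\subset\subset V$. By Remark~\ref{rem.harmf}, $h(x)=\mathbb E_x[(\mathbf 1_Vh)(X_{\tau_U})]$ for $x\in U$. I would split $\mathbf 1_Vh=\mathbf 1_{V\setminus U}h$; since $X_{\tau_U}\notin U$ a.s., this is all of $\mathbf 1_Vh$ evaluated at $X_{\tau_U}$. The Ikeda--Watanabe formula, in the form used in Lemma~\ref{lm.hr1} with $D$ replaced by $U$, then gives
\[
h=R^U\gamma \quad\text{on } U,\qquad \gamma:=\nu^{*,U}_{\mathbf 1_{V}h},
\]
where $\gamma(dy)=\int_{V\setminus U}h(z)\nu^*_z(dy)\,dz$ is a positive Radon measure on $U$ (Remark~\ref{rem.imp}). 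The extension $\mathbf 1_Vh$ is in $L^1(\mathbb R^d)\subset\mathscr T_\nu(U)$.

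Lemma~\ref{lm.duoper} on $U$ gives $-(R^U\gamma,L^*\eta)=(\gamma,\eta)$. One caveat is that $\gamma$ need only be locally finite, so I would approximate it by $\gamma\mathbf 1_{U_n}$ with $U_n\uparrow U$ and use monotone convergence on both sides; this is legitimate because $|L^*\eta|$ is bounded and $R^U\gamma=h\in L^1$. Therefore
\[
-\int_U hL^*\eta=\int_U\eta\,d\gamma=\int_{V\setminus U}h\,L^*\eta,
\]
where the last equality is the identity of Remark~\ref{rem.imp}. Adding the two sides gives $\int_V hL^*\eta=0$.

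For the converse, I assume \eqref{eq.tharm} and fix a compact $K\subset V$. I take an open $U$ with $K\subset U\subset\subset V$ and put $w(x):=\mathbb E_x[(\mathbf 1_Vh)(X_{\tau_U})]=R^U\gamma$, with $\gamma$ as above. By the first part of the argument, $-Lw=\gamma$ in $\mathscr D'(U)$ when $w$ is extended by $\mathbf 1_Vh$ outside $U$. The same splitting shows that $h$ satisfies $-Lh=\gamma$ in $\mathscr D'(U)$, since $\int_V hL^*\eta=0$. Hence $v:=h-w$, with $v=0$ off $U$, satisfies $\int_U vL^*\eta=0$ for all $\eta\in C^2_c(U)$, and $v\in L^1(U)$.

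The main obstacle is to conclude that $v=0$ a.e., i.e.\ a uniqueness statement for distributional $L$-harmonic functions in $L^1$ vanishing outside $U$. I would test with $\eta=R^{*,U}\phi$ for $\phi\in C_c^\infty(U)$, after mollifying to obtain admissible test functions. Formally $L^*R^{*,U}\phi=-\phi$ in $U$, which would give $(v,\phi)=0$. The difficulty is that $R^{*,U}\phi$ is not compactly supported in $U$. I would try the following route instead.
\begin{itemize}
\item Mollify $v$ to $v_\varepsilon=j_\varepsilon\ast v$, which is smooth and $L$-harmonic classically on $o_\varepsilon(U)$.
\item Apply Dynkin's formula on $U_\delta\subset\subset o_\varepsilon(U)$, which gives $v_\varepsilon(x)=\mathbb E_x v_\varepsilon(X_{\tau_{U_\delta}})$.
\item Let $\varepsilon\to0$ and then $U_\delta\uparrow U$. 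This uses the $L^1$-convergence of the exit distributions, and it is here that $v=0$ off $U$ enters.
\end{itemize}
This yields $v=0$ a.e. in $U$. I would then pass from the a.e.\ identity $h=w$ to every point of $K$ using excessiveness of $h$ together with the fine continuity of $w$, via $\alpha R^V_\alpha$ regularization.

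Finally, I must also check that $h$ is $(P^V_t)$-excessive, taking the excessive version $\lim_{\alpha\to\infty}\alpha R^V_\alpha h$. I would obtain this from the local representation $h=R^U\gamma+(\text{exterior part})$ on each $U$, together with the argument of \cite[Proposition 2.4]{BCR}.
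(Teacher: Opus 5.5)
\textbf{The converse direction has a genuine gap: the uniqueness statement you reduce to is false.} The properties $v\in L^1(U)$, $v=0$ off $U$, and $\int_U vL^*\eta=0$ for all $\eta\in C^2_c(U)$ do not force $v=0$.

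A counterexample: take $L=-(-\Delta)^s$ and $U=B_1$. The function $v(x)=(1-|x|^2)_+^{s-1}$ is integrable since $s-1>-1$, and it vanishes outside $B_1$. It is $s$-harmonic in $B_1$; up to a constant it is the Martin kernel of the ball integrated against surface measure. In fact it is a nonzero $(P^{B_1}_t)$-harmonic function in exactly the sense of the proposition. This is also why the paper needs the domination condition \eqref{zero-boundary-cond} in Lemma~\ref{equiv-dirichlet}: vanishing off the domain does not remove the harmonic part.

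Your sketch breaks at the last limit. After $\varepsilon\to0$ you have $v(x)=\mathbb E_x v(X_{\tau_{U_\delta}})$. As $U_\delta\uparrow U$, the exit position lands in $U\setminus U_\delta$, where $v$ may blow up. Convergence of the exit distributions to that of $U$, which charges only $U^c$ where $v=0$, does not control an unbounded $L^1$ integrand. For the example above, $\mathbb E_x v(X_{\tau_{U_\delta}})=v(x)$ for every $\delta$.

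\textbf{What is missing.} Once you pass to $v=(h-R^U\gamma)\mathbf 1_U$, you discard the information that $h$ is distributionally harmonic across $\partial U$. That information is exactly what excludes boundary blow-up. The paper needs no uniqueness statement. It extends $h$ by zero off $V$ and mollifies $h$ itself. By \eqref{eq.tharm} and translation invariance, $h_\varepsilon=h\ast j_\varepsilon$ is smooth and weakly harmonic in $o_\varepsilon(V)$, hence probabilistically harmonic there by \cite{MZZ}. So $h_\varepsilon(x)=\mathbb E_x h_\varepsilon(X_{\tau_K})$ for a fixed compact $K\subset o_\varepsilon(V)$, and one lets $\varepsilon\to0$ with $K$ fixed; no exhaustion of the domain is needed. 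Necessity is proved the same way, mollifying and using the equivalence of weak and probabilistic harmonicity from \cite{MZZ}.

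\textbf{A secondary point in your necessity direction.} Lemma~\ref{lm.hr1} requires $V\subset\subset D$, and the Ikeda--Watanabe formula only accounts for exits from $U$ by a jump. Using it with $D=U$ tacitly assumes $\mathbb P_x(X_{\tau_U-}\in U)=1$. The paper has this only for Lipschitz domains under extra hypotheses (see the proof of Corollary~\ref{col.12}). Without it, the continuous-exit part of $\mathbb E_x[(\mathbf 1_Vh)(X_{\tau_U})]$ is itself a $(P^U_t)$-harmonic function. Showing that part is distributionally harmonic is the very statement you are trying to prove, so the argument becomes circular.
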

\begin{proof}
Observe  that the notion of  $(P^V_t)$-harmonicity  as well as condition \eqref{eq.tharm} do not depend on values of $h$
outside $V$. We redefine $h$ by letting $h=0$ on $V^c$. Then  $(P^V_t)$-harmonicity of $h$ is equivalent to 
\[
\mathbb E_x[h(X_{\tau_K})]=h(x),\quad x\in K
\]
for any compact $K\subset V$. Hence, by translation invariance of L\'evy processes, locally bounded function  $h_\varepsilon :=h\ast j_\varepsilon$
satisfies
\begin{equation}
\label{eq.varhar}
\mathbb E_x[h_\varepsilon(X_{\tau_K})]=h_\varepsilon(x),\quad x\in K
\end{equation}
for any compact $K\subset o_\varepsilon(V)$. By \cite[Theorem 2.18]{MZZ} (see also \cite[Theorem 2.9]{Chen}),
\[
0=\EE(h_\varepsilon,\eta)=\int_{\mathbb R^d}h_\varepsilon L^*\eta=\int_{\mathbb R^d} [j_\varepsilon\ast h] L^*\eta=
\int_{V} h [j_\varepsilon\ast(L^*\eta)]
,\quad \eta\in C^2_c(o_\varepsilon(V)).
\]
This proves the necessity. Suppose now that \eqref{eq.tharm} holds. Let $\varepsilon_0>0$.
Then for any $\varepsilon\in (0,\varepsilon_0)$ and any $\eta\in C_c^2(o_\varepsilon(V))$, we have
\begin{equation}
\label{eq.tharmeps}
0=\int_{V}hL^*(\eta\ast j_\varepsilon)=\int_{\mathbb R^d}(h\ast j_\varepsilon) L^*\eta=\mathcal E(h\ast j_\varepsilon,\eta).
\end{equation}
By \cite{MZZ} $h\ast j_\varepsilon$ is a harmonic function in $o_\varepsilon(V)$, so it satisfies \eqref{eq.varhar}. 
Passing to the limit in \eqref{eq.varhar} gives the sufficiency.  
\end{proof}

\begin{proposition}
\label{prop.punctured-T}
Let $V\subset\mathbb R^d$ be a bounded open set and let $x_0\in V$.
Then
\[
\mathscr T_\nu(V)
=
\mathscr T_\nu(V\setminus\{x_0\}).
\]
\end{proposition}

\begin{proof}
The inclusion $\mathscr T_\nu(V)\subset \mathscr T_\nu(V\setminus\{x_0\})$ is clear.
We now prove the reverse inclusion. Let $u\in\mathscr T_\nu(V\setminus\{x_0\})$.
Fix $K\subset\subset V$. If $x_0\notin K$, then $K\subset V\setminus \{x_0\}$
and $\rho_{K,V}\le \rho_{K,V\setminus\{x_0\}}$, which in turn implies that $u\in L^1_{\rho_{K,V}}(\mathbb R^d)$.
We have to prove that the latter relation also holds under the condition  $x_0\in K$ that we make for the rest of the proof. 
Put $r:=r_{K,V}$ and  $a:= r/10$.
Observe that
\begin{equation}
\label{eq.dfks3}
\begin{split}
\rho_{K,V}(x)&=\nu\bigl(B_r^c\cap(K\cap B^c(x_0,a)-x)\bigr)
+\nu\bigl(B_r^c\cap(x-K\cap B^c(x_0,a))\bigr)\\&\quad+
\nu\bigl(B_r^c\cap(K\cap B(x_0,a)-x)\bigr)
+\nu\bigl(B_r^c\cap(x-K\cap B(x_0,a))\bigr)
\\& \le \rho_{K\cap B^c(x_0,a),V\setminus\{x_0\}}(x)+
\nu\bigl(B_r^c\cap B(x_0-x,a)\bigr)
+\nu\bigl(B_r^c\cap B(x-x_0,a)\bigr),
\quad x\in\mathbb R^d.
\end{split}
\end{equation}
Define
\[
q(x)
:=
\nu\bigl(B_r^c\cap B(x_0-x,a)\bigr)
+\nu\bigl(B_r^c\cap B(x-x_0,a)\bigr),
\]
and write the polar representation of $\nu$,
\[
\nu(E)
=
\int_{\mathbb S^{d-1}}\lambda(d\theta)
\int_0^\infty
\mathbf1_E(t\theta)\frac{dt}{t^{1+2s}}.
\]
An elementary comparison along each ray shows that there exists a constant \(C=C(s)\) such that
for any $x\in\mathbb R^d$ and $\varepsilon\in \{-1,1\}$,
\begin{equation}
\label{eq.ray-comparison}
\begin{split}
\int_0^\infty
\mathbf1_{B_r^c}(t\theta)\mathbf1_{B(\varepsilon x_0,a)}(x+ t\theta)
\frac{dt}{t^{1+2s}}
&=\int_r^\infty\mathbf1_{B(\varepsilon x_0,a)}(x+ t\theta)
\frac{dt}{t^{1+2s}}
\\&\le
C
\int_r^\infty
\mathbf1_{B(\varepsilon x_0,4a)\setminus B(\varepsilon x_0,2a)}(x+t\theta)
\frac{dt}{t^{1+2s}}\\&=C
\int_0^\infty
\mathbf1_{B_r^c}(t\theta)\mathbf1_{B(\varepsilon x_0,4a)\setminus B(\varepsilon x_0,2a)}(x+t\theta)
\frac{dt}{t^{1+2s}}.
\end{split}
\end{equation}
Integrating \eqref{eq.ray-comparison} with respect to the spectral measure 
$\lambda(d\theta)$ yields
\[
q(x)\le C\rho_{B(x_0,4a)\setminus B(x_0,2a),V\setminus\{x_0\}}(x),
\qquad x\in\mathbb R^d.
\]
Together with \eqref{eq.dfks3}, this proves that \(u\in L^1_{\rho_{K,V}}(\mathbb R^d)\). 
Since the compact set \(K\subset V\) was arbitrary, the assertion follows.
\end{proof}

\begin{theorem}
\label{th.main1}
Suppose that  $u\in \mathscr T_\nu(D_*)\cap L^p_{loc}(D_*)$  is  a positive
 solution to \eqref{eq1.0}.
Then 
 $u\in \mathscr T_\nu(D)\cap L^p_{loc}(D)$ and there exists $k\ge 0$  such that
\[
-Lu= u^p+k\delta_{0}\quad \text{in} \quad \mathscr D'(D).
\]
Furthermore, if $p\ge d_s$, then $k=0$ 
\end{theorem}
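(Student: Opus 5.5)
The plan has three parts: first the function spaces, then a Riesz decomposition that produces the Dirac mass, and finally removability for $p\ge d_s$ by a capacity argument.

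\textbf{Function spaces.} Proposition~\ref{prop.punctured-T} gives $u\in\mathscr T_\nu(D)$ at once. The main work is $u^p\in L^1_{loc}(D)$ near $0$. Fix a small ball $B=B_r$ with $\overline B\subset D$, put $B_*:=B\setminus\{0\}$, and let $\mu$ be the positive Radon measure on $B_*$ given by $\mu:=u^p\,dx+\nu^{*,B}_u$ (see Remark~\ref{rem.imp}). Then \eqref{eq1.3e} gives $-\int_B u\,L^*\eta=\int\eta\,d\mu$ for $\eta\in\mathscr D(B_*)$. For $\varepsilon>0$ the mollification $u_\varepsilon:=u\ast j_\varepsilon$ satisfies $-Lu_\varepsilon=\mu_\varepsilon\ge0$ on $\{\varepsilon<|x|<r-\varepsilon\}$ by translation invariance, and is therefore superharmonic there.

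The key step is to show that $u$ restricted to $B$ is $(P^B_t)$-excessive. I would do this by a Dynkin-formula argument on the annuli $A_\delta:=\{\delta<|x|<r\}$, letting $\delta\to0$, and then invoke the fact that a singleton is polar. Polarity holds since $2s<d$ and the process is transient with unbounded $G(\cdot,0)$. For excessive, positive, supermedian functions, a polar set is removable in the sense that the excessive regularisation extends across it.

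With excessiveness in hand, the Riesz decomposition on $B$ gives
\[
u=R^B\sigma+h\quad\text{on } B,
\]
where $\sigma$ is a positive Radon measure on $B$ and $h$ is $(P^B_t)$-harmonic. Testing against $\eta\in\mathscr D(B_*)$ and using Lemma~\ref{lm.duoper} identifies $\sigma$ on $B_*$ with $u^p\,dx+\nu^{*,B}_u$. Hence $\sigma=u^p\,dx+\nu^{*,B}_u+k\delta_0$ for some $k\ge0$, with the usual caveat that $\sigma$ need only be locally finite near $\partial B$; shrinking the ball handles this.

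\textbf{Distributional equation.} The Riesz measure is Radon on $B$, so $u^p\in L^1(B_{r/2})$, that is, $u\in L^p_{loc}(D)$. By Proposition~\ref{prop.harm}, $h$ contributes nothing when tested against $L^*\eta$. Lemma~\ref{lm.duoper} then yields
\[
-\int u\,L^*\eta=\int\eta u^p+k\eta(0)\quad\text{for }\eta\in\mathscr D(B).
\]
A partition of unity combines this with the equation on $D_*$ to give $-Lu=u^p+k\delta_0$ in $\mathscr D'(D)$.

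\textbf{Case $p\ge d_s$.} From $u\ge kR^B\delta_0=kG_B(\cdot,0)$ and $u^p\in L^1$ near $0$ it would follow that $G_B(\cdot,0)\in L^p$ near $0$. By scaling \eqref{scaling_def}, $g(x)=|x|^{2s-d}g(x/|x|)$ with $g$ strictly positive and lower semicontinuous, so $g\ge c>0$ on $\mathbb S^{d-1}$. Together with the fact that $G-G_B$ stays bounded near $0$, this would force $\int_{B_\rho}|x|^{(2s-d)p}\,dx<\infty$, which fails for $p\ge d_s$; hence $k=0$. The bound on $G-G_B$ is itself delicate, since it requires regularity of the exit distribution.

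As a backup I would use a test-function argument. Take $\eta_n(x)=\phi(nx)$ and the scaling $L^*\eta_n(x)=n^{2s}(L^*\phi)(nx)$. Then
\[
k\le\int u^p\eta_n+\int u\,|L^*\eta_n|,
\]
and Hölder gives $\int_{B_{1/n}}u\,n^{2s}\lesssim\|u\|_{L^p(B_{1/n})}\,n^{2s-d/p'}$, where $2s-d/p'\le0$ exactly when $p\ge d_s$. The tail of $L^*\phi$ outside $B_{1/n}$ needs a separate bound, via $|L^*\phi(y)|\lesssim(1+|y|)^{-d-2s}$ in an averaged sense.

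I expect the main obstacle to be the excessiveness of $u$ across the origin, i.e.\ passing from supersolutions on punctured annuli to a genuinely excessive function on $B$. My first attempt would be to show that $\alpha R^B_\alpha u\le u$ by approximating with $u_\varepsilon$ and cut-offs $\eta_n$ vanishing near $0$, and to control the error term $\int u\,|L^*\eta_n|$ using the same Hölder and scaling estimate as in the backup argument.
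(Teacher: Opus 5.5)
Your core argument is the paper's proof: Dynkin's formula for mollifications on domains punctured at the origin, a limit producing an excessive function, removal of $\{0\}$ by polarity (Remark~\ref{rem.polar}), the Riesz decomposition, and identification of the Riesz measure off the origin, which leaves $k\delta_0$ with $k\ge0$. Localizing to a small ball and gluing with a partition of unity, instead of working on $o_\delta(D)$ and taking vague limits of $\sigma_\delta$, is only cosmetic. Reading off $u\in L^p_{\mathrm{loc}}(D)$ from the local finiteness of the Riesz measure is also fine.

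The step you single out as the main obstacle needs no error-term control, and the mechanism you propose for it would not work. The cut-off/H\"older control is circular, since it presupposes $u\in L^p$ near $0$, which is one of the conclusions. Moreover, for $p<d_s$ the error coming from the part of the test function near the origin cannot be shown small: its limit is $k\eta(0)$, precisely the mass you want to detect. What works is the monotone limit of Steps~2--3 of the paper. The Dynkin representation gives $\alpha R^{A_\delta}_\alpha u\le u$ a.e.\ on $A_\delta$. Together with $R^{A_\delta}_\alpha\uparrow R^{B\setminus\{0\}}_\alpha$ and $\tau_{B\setminus\{0\}}=\tau_B$ $\mathbb P_x$-a.s.\ (polarity, Proposition~\ref{prop.green}), this yields $\alpha R^B_\alpha u\le u$ a.e.\ in $B$, and Theorem~\ref{th-denys} supplies an excessive version. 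Note also that passing from the mollified Dynkin identity to $u$ itself already needs Deny's theorem, because the exit terms are excessive but converge only a.e.\ along a subsequence. The potential terms are handled by Lemma~\ref{lm2.2}.

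For $p\ge d_s$, your first route does not close. Boundedness of $G(\cdot,0)-G_B(\cdot,0)=\mathbb E_\cdot[g(X_{\tau_B})]$ near $0$ is not available here, because $g$ need not be bounded on $\mathbb S^{d-1}$ for anisotropic $\lambda$. For the cylindrical process, $g=\infty$ on the coordinate axes once $d-1\ge 4s$. The bound is also unnecessary: scale the Dirichlet Green function itself. By Proposition~\ref{green-critical-lemma} and domain monotonicity, $G_{B_{2r}}(x,0)\ge (r/|x|)^{d-2s}G_{B_{2r}}(rx/|x|,0)$ for $0<|x|<r$. Positivity and lower semicontinuity of $G_{B_{2r}}(\cdot,0)$ on $r\mathbb S^{d-1}$ then give $G_{B_{2r}}(\cdot,0)\ge c|x|^{2s-d}$ on a cone, so $G_{B_{2r}}(\cdot,0)\notin L^{d_s}$ near $0$ (Corollary~\ref{green-critical}). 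Combined with $u\ge kG_{B_{2r}}(\cdot,0)$, this is the paper's argument.

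Your backup test-function argument is a genuinely different, valid and more elementary route. It uses only the scaling of $\nu$, H\"older's inequality and the tail condition, with no positivity or semicontinuity of Green functions. The price is that you must first have the full identity $-Lu=u^p+k\delta_0$ and $u\in L^p$ near $0$, which your first part supplies. To make the tail rigorous, replace pointwise decay by the following bound, which follows from the scaling of $\nu$ when $\operatorname{supp}\phi\subset B_1$: $n^{2s}|L^*\phi(nx)|\le\|\phi\|_\infty\,\nu(B_{1/n}(x))$ for $|x|\ge 2/n$. Note that $y\mapsto\nu(B_1(y))\mathbf 1_{\{|y|>2\}}$ lies in $L^1\cap L^\infty$. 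H\"older on $\{2/n\le|x|\le\rho\}$ then gives a bound $C n^{2s-d/p'}\|u\|_{L^p(B_\rho)}$. On $\{|x|>\rho\}$ dominated convergence applies, since the majorant $\nu(B_{\rho/2}(x))$ is $u$-integrable there by $u\in\mathscr T_\nu(D)$. At the endpoint $p=d_s$ the estimate is scale invariant, so you only get $\limsup_n|\int u\,L^*\eta_n|\le C\|u\|_{L^p(B_\rho)}$ and must then let $\rho\to0$.
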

\begin{proof} 
\medskip 
\noindent 
\textbf{Step 1.} 
In the first step we shall provide  a probabilistic representation   for  $u$.
The representation follows from an abstract result in \cite{K:MAAN} but we shall give a more elementary reasoning.  
For given $\varepsilon,\delta>0$  let 
\[
D^\delta:=o_\delta(D),\quad D^\delta_*:=o_\delta(D)\setminus \{0\}, \quad D^\delta_\varepsilon:=o_\delta(D)\setminus \overline{B_{\varepsilon}}.
\]
By the definition of a  solution to \eqref{eq1.0} 
\begin{equation}
\label{eq1.7}
-\int_{\mathbb R^d}uL^*\eta=\int_{D}u^p\eta,\quad\eta\in C_c^2(D_*).
\end{equation}
Testing \eqref{eq1.7} with $\eta_\varepsilon:=\eta\ast j_\varepsilon$ gives,  for $\varepsilon\le\delta$,
\[
-\int_{\mathbb R^d}uL^*\eta_\varepsilon=\int_{D} u^p\eta_\varepsilon,\quad \eta\in C_c^{\infty}(D_\varepsilon^\delta).
\]
Set
$u^D_\varepsilon:=(\mathbf1_Du)\ast j_\varepsilon\in C_c^\infty(\mathbb R^d)$, and $\nu^{*,D}_{u,\varepsilon}:=\nu^{*,D}_u\ast j_\varepsilon$ (see  \eqref{eq.defnust}). 
The translation invariance of \(L^*\), together with Remark~\ref{rem.imp}, gives
\[
\begin{split}
-\int_{\mathbb R^d}u L^*\eta_\varepsilon&=-\int_{\mathbb R^d}\mathbf1_Du L^*\eta_\varepsilon
-\int_{\mathbb R^d}\mathbf1_{D^c}u L^*\eta_\varepsilon\\&=
-\int_{\mathbb R^d}j_\varepsilon\ast(\mathbf1_Du) L^*\eta-\int_{D}\eta_\varepsilon\,d\nu^{*,D}_u=\int_{\mathbb R^d}Lu^D_\varepsilon\eta-\int_{D}\eta\,d\nu^{*,D}_{u,\varepsilon}.
\end{split}
\]
Consequently,
\[
-\int_{\mathbb R^d}Lu^D_\varepsilon \eta=
\int_{\mathbb R^d}(j_\varepsilon\ast u^p+\nu^{*,D}_{u,\varepsilon})  \eta,\quad \eta\in C_c^{\infty}(D_\varepsilon^\delta).
\]
Fix $\varepsilon_0>0$ and let  $\varepsilon \in (0,\varepsilon_0)$.
Since \(Lu_\varepsilon^D\), \(j_\varepsilon*u^p\), and \(\nu_{u,\varepsilon}^{*,D}\) are continuous, 
we infer that 
\[
-Lu^D_\varepsilon(x)=(j_\varepsilon\ast u^p)(x)+\nu^{*,D}_{u,\varepsilon}(x),\quad x\in D_{\varepsilon_0}^\delta.
\]
Dynkin's formula \cite[Theorem~3.9.4]{Kolokoltsov} therefore yields    
\[
u^D_\varepsilon(x)= \mathbb E_xu^D_\varepsilon(X_{\tau_{D_{\varepsilon_0}^\delta}})+R^{D_{\varepsilon_0}^\delta} (j_\varepsilon\ast u^p)(x)+R^{D_{\varepsilon_0}^\delta}\nu^{*,D}_{u,\varepsilon}(x),\quad x\in D_{\varepsilon_0}^\delta,\, \varepsilon \in (0,\varepsilon_0).
\]
Set $\gamma^\varepsilon_{\delta,\varepsilon_0}(x):=  \mathbb E_xu^D_\varepsilon(X_{\tau_{D_{\varepsilon_0}^\delta}})$.
Since $u$ is non-negative, $\gamma^\varepsilon_{\delta,\varepsilon_0}$
is $(P^{D_{\varepsilon_0}^\delta}_t)$-excessive (see e.g. \cite[Proposition II.2.8]{BG}). 
By Deny's theorem (see Theorem \ref{th-denys}), after passing to a subsequence, \(\gamma_{\delta,\varepsilon_0}^{\varepsilon}\) 
converges almost everywhere to a \((P_t^{D_{\varepsilon_0}^\delta})\)-excessive function \(\gamma_{\delta,\varepsilon_0}\). 
Lemma~\ref{lm2.2} permits passage to the limit in both potential terms, and hence gives  
\begin{equation}
\label{eq4.4}
u=\gamma_{\delta,\varepsilon_0}+R^{D_{\varepsilon_0}^\delta}u^p-R^{D_{\varepsilon_0}^\delta}\nu^{*,D}_{u}\quad\text{a.e. in }D_{\varepsilon_0}^\delta.
\end{equation}

\medskip 
\noindent 
\textbf{Step 2. Passage to the limit as \(\varepsilon_0\searrow0\).} 
By Proposition~\ref{prop.green},
\begin{equation}
\label{eq.conv12}
R^{D_{\varepsilon_0}^\delta}\nu^{*,D}_{u}\to
R^{D^\delta_*}\nu^{*,D}_{u},\qquad R^{D_{\varepsilon_0}^\delta}u^p\to R^{D^\delta_*}u^p
\end{equation}
as $\varepsilon_0\searrow 0$. 
Moreover, Remark~\ref{rem.imp} implies that \(\nu_u^{*,D}(D^\delta)<\infty\) for every \(\delta>0\). 
Together with Lemma~\ref{lm2.2}, this shows that \(R^{D_*^\delta}\nu_u^{*,D}<\infty\) quasi-everywhere in \(D^\delta\) for any $\delta>0$. 
Fatou's lemma applied to \eqref{eq4.4} then yields \(R^{D_*^\delta}u^p<\infty\) almost everywhere. 
Proposition \ref{prop.green} and Remarks~\ref{rem.polar} and \ref{rem.imp} 
imply that  $R^{D_*^\delta}\nu^{*,D}_{u}=R^{D^\delta}\nu^{*,D}_{u}$,
and $R^{D_*^\delta}u^p=R^{D^\delta}u^p$  in $D^\delta$.  It now follows from \eqref{eq.conv12} that \(\gamma_{\delta,\varepsilon_0}\) 
converges as \(\varepsilon_0\searrow0\); we denote its limit by \(\gamma_\delta\).
We thus have
\begin{equation}
\label{eq4.5}
 u(x)=\gamma_{\delta}(x)+R^{D^\delta}u^p(x)+R^{D^\delta}\nu^{*,D}_{u}(x)\quad \text{for a.e. }x\in D^\delta.
\end{equation}

\medskip 
\noindent 
\textbf{Step 3. $\gamma_\delta$ has an excessive version in $D^\delta$.} 
Recall that $\gamma_{\delta,\varepsilon_0}$ is $(P_t^{D_{\varepsilon_0}^\delta})$-excessive, 
which implies (see \cite[Proposition II.2.3]{BG}) that  for any $\alpha>0$
\begin{equation}
\label{eq4.6}
\alpha R^{D_{\varepsilon_0}^\delta}_\alpha (\gamma_{\delta,\varepsilon_0})(x)\le \gamma_{\delta,\varepsilon_0}(x),\quad x\in D_{\varepsilon_0}^\delta.
\end{equation}
Fix $\hat\varepsilon_0\in (\varepsilon_0,\delta)$. Since $D_{\hat \varepsilon_0}^\delta\subset D_{\varepsilon_0}^\delta$, we have
$\tau_{D_{\hat \varepsilon_0}^\delta}\le \tau_{D_{\varepsilon_0}^\delta}$ (see \eqref{eq.hitt}), which combined with \eqref{eq.rdef}, the fact that  $\gamma_{\delta,\varepsilon_0}$
is non-negative, and \eqref{eq4.6} yields
\[
\alpha R^{D_{\hat \varepsilon_0}^\delta}_\alpha (\gamma_{\delta,\varepsilon_0})(x)\le \gamma_{\delta,\varepsilon_0}(x),\quad x\in D_{\varepsilon_0}^\delta.
\]
Letting $\varepsilon_0\searrow 0$ and using Lemma~\ref{lm2.2}, and then letting 
$\hat \varepsilon_0\searrow 0$ and using Proposition  \ref{prop.green}  give
\[
\alpha R^{D^\delta_*}_\alpha\gamma_\delta\le \gamma_\delta,\quad \text{a.e. in } D^\delta_*.
\]
This in turn, by Proposition  \ref{prop.green}, implies that 
\[
\alpha R^{D^\delta}_\alpha\gamma_\delta\le \gamma_\delta,\quad \text{a.e. in } D^\delta.
\]
Consequently, by Theorem~\ref{th-denys} there exists a $(P^{D^\delta}_t)$-excessive function $\hat \gamma_\delta(\cdot)$ such that 
$\hat \gamma_\delta(\cdot)= \gamma_\delta(\cdot)$ a.e. Thus, by \eqref{eq4.5}, 
\begin{equation}
\label{eq.now1}
 u=\hat \gamma_\delta+R^{D^\delta}u^p+R^{D^\delta}\nu^{*,D}_{u},\quad \text{a.e. in }D^\delta.
\end{equation}

\medskip 
\noindent 
\textbf{Step 4. Conclusion of the result.} 
By \cite[Theorem VI.2.11]{BG}
\[
\hat\gamma_\delta=h_\delta+R^{D^\delta}\sigma_\delta\quad\text{in }D^\delta
\]
for some positive Radon  measure $\sigma_\delta$ and positive $(P^{D^\delta}_t)$-harmonic function $h_\delta$.
Consequently
\begin{equation}
\label{eq.delta}
 u=h_\delta+R^{D^\delta}\sigma_\delta+R^{D^\delta}u^p+R^{D^\delta}\nu^{*,D}_{u},\quad \text{a.e. in }D^\delta.
\end{equation}
Fix  $\eta\in C_c^2(D^\delta)$. Multiplying the above equality by (the bounded continuous function) $L^*\eta$ 
and applying Proposition~\ref{prop.harm} and Lemma~\ref{lm.duoper} yields
\[
\begin{split}
-\int_{D^\delta}uL^*\eta&=\int_{D^\delta}h_\delta L^*\eta+\int_{D^\delta} R^{D^\delta}\sigma_\delta L^*\eta+
\int_{D^\delta} R^{D^\delta}u^p L^*\eta+\int_{D^\delta} R^{D^\delta}\nu^{*,D}_{u} L^*\eta
\\&=\int_{D^\delta}\eta\,d\sigma_\delta+\int_{D^\delta}\eta u^p+\int_{D^\delta}\eta\,d\nu^{*,D}_{u}.
\end{split}
\]
Hence
\[
-\int_{D^\delta}uL^*\eta=\int_{D^\delta}\eta\,d\sigma_\delta+\int_{D^\delta}\eta u^p+\int_{D^\delta}\eta\,d\nu^{*,D}_{u},\quad \eta\in C_c^2(D^\delta).
\]
Consequently, there exists a positive Radon measure $\sigma$ on $D$ such that, up to a subsequence, $\sigma_\delta\to \sigma$
in the vague topology (i.e. $\int_{D}\eta\,d\sigma_{\delta}\to \int_D\eta\,d\sigma$ for any $\eta\in C_c(D)$), which in turn 
combined with the previous equality yields
\[
-\int_{D} uL^*\eta=\int_{D}\eta\,d\sigma+\int_{D}\eta u^p+\int_{D}\eta\,d\nu^{*,D}_{u}\quad \eta\in C_c^2(D).
\]
Combining Remark~\ref{rem.imp} with Proposition~\ref{prop.punctured-T}, we obtain 
\[
-\int_{\mathbb R^d}uL^*\eta = \int_D\eta\,d\sigma+\int_D\eta u^p, \qquad \eta\in C_c^2(D).
\]
Comparison with \eqref{eq1.7} shows that \(\operatorname{supp}\sigma\subset\{0\}\), 
which proves the first assertion. Assume now that \(p\geq d_s\). By \eqref{eq.delta}, 
the vague convergence of \((\sigma_\delta)\), and Lemma~\ref{lm2.2}, for every open set \(V\subset\subset D\) 
we have
\[
u\geq R^V\sigma=kG_V(\cdot,0) \quad\text{a.e. in }V. 
\]
If \(k>0\), this inequality would imply 
\(G_V(\cdot,0)\in L^p_{\mathrm{loc}}(V)\), contrary to Corollary~\ref{green-critical}. Hence \(k=0\).
\end{proof}

\begin{corollary}
\label{rem.ineq}
Suppose that $u\in \mathscr T_\nu(D)$ is a positive function  satisfying $-Lu=\mu$ in $\mathscr D'(D)$
for a positive Radon measure $\mu$ on $D$. Then there exists a $(P^D_t)$-harmonic function $h$ such that
\begin{equation}
\label{eq4.4ad}
u=h+R^{D}\mu+R^{D}\nu^{*,D}_{u}\quad\text{a.e. in }D.
\end{equation}
\end{corollary}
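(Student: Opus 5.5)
The argument will rerun Steps 1--4 of the proof of Theorem~\ref{th.main1}, with the punctured domain replaced by $D$ itself and $u^p$ replaced by $\mu$. Since $0$ is no longer excluded, no limit $\varepsilon_0\searrow0$ is needed. Only the limit $\delta\searrow0$ remains, and in the theorem it was carried out only at the level of distributions. Here we need it at the level of the representation.

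\textbf{Step 1: representation on $D^\delta$.} Fix $\delta>0$ and put $D^\delta=o_\delta(D)$. Mollify: with $u^D_\varepsilon=(\mathbf1_Du)\ast j_\varepsilon$, Remark~\ref{rem.imp} and translation invariance give
\[
-Lu^D_\varepsilon=j_\varepsilon\ast\mu+\nu^{*,D}_{u,\varepsilon}\quad\text{pointwise on }D^\delta,\ \varepsilon<\delta .
\]
Dynkin's formula then yields
\[
u^D_\varepsilon=\mathbb E_\cdot u^D_\varepsilon(X_{\tau_{D^\delta}})+R^{D^\delta}(j_\varepsilon\ast\mu)+R^{D^\delta}\nu^{*,D}_{u,\varepsilon}\quad\text{on }D^\delta .
\]
The measures $\mathbf 1_{D^\delta}(j_\varepsilon\ast\mu)$ converge to $\mathbf 1_{D^\delta}\mu$ in $[C_b(D^\delta)]^*$ up to the boundary, and the same holds for $\nu^{*,D}_{u,\varepsilon}$. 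This may require shrinking $\delta$ slightly, or using $o_{\delta'}(D)$ for the test region, since both measures are Radon and hence finite on $o_{\delta/2}(D)$. Applying Deny's theorem to the excessive first term and Lemma~\ref{lm2.2} to the two potentials, we get an a.e.\ identity
\[
u=\gamma_\delta+R^{D^\delta}\mu+R^{D^\delta}\nu^{*,D}_u\quad\text{on }D^\delta .
\]
Here $\gamma_\delta$ admits a $(P^{D^\delta}_t)$-excessive version by the Step~3 argument. Then, by Proposition~\ref{prop.harm} together with Lemma~\ref{lm.duoper}, testing against $L^*\eta$ shows that the measure in the Riesz decomposition of $\gamma_\delta$ vanishes; this is the point where the equation $-Lu=\mu$ holds on all of $D$. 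Hence $\gamma_\delta=:h_\delta$ is $(P^{D^\delta}_t)$-harmonic.

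\textbf{Step 2: limit $\delta\searrow0$.} By Proposition~\ref{prop.green}, as $\delta\downarrow0$ the Green functions $G_{D^\delta}$ increase to $G_D$. By monotone convergence,
\[
R^{D^\delta}\mu\nearrow R^D\mu,\qquad R^{D^\delta}\nu^{*,D}_u\nearrow R^D\nu^{*,D}_u .
\]
Since $u$ is finite a.e., Fatou's lemma gives that both limits are finite a.e.\ and that $u\ge R^D\mu+R^D\nu^{*,D}_u$. Consequently $h_\delta=u-R^{D^\delta}\mu-R^{D^\delta}\nu^{*,D}_u$ decreases a.e.\ to
\[
h:=u-R^D\mu-R^D\nu^{*,D}_u\ge0 .
\]

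\textbf{Step 3: $h$ is $(P^D_t)$-harmonic (main obstacle).} The difficulty is that harmonicity is not obviously preserved under decreasing limits on an increasing family of domains. I would first prove $h\in L^1_{loc}(D)$, which is immediate from $0\le h\le u$. Then I would verify \eqref{eq.tharm} on each $V\subset\subset D$, which Proposition~\ref{prop.harm} requires. For $\eta\in C^2_c(D)$, Lemma~\ref{lm.duoper} gives
\[
-\int R^D\mu\,L^*\eta=(\mu,\eta),\qquad -\int R^D\nu^{*,D}_u\,L^*\eta=(\nu^{*,D}_u,\eta),
\]
the latter applied to the restriction of $\nu^{*,D}_u$ to a neighbourhood of $\operatorname{supp}\eta$ plus a harmonic remainder; and \eqref{eq1.3e} gives $-\int_D uL^*\eta=(\mu+\nu^{*,D}_u,\eta)$. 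Subtracting yields $\int_D hL^*\eta=0$.

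The delicate points are the following. $R^D\mu$ need not be globally integrable when $\mu$ is merely Radon, so I would split $\mu=\mu\mathbf1_W+\mu\mathbf1_{D\setminus W}$ with $\operatorname{supp}\eta\subset W\subset\subset D$. The part $R^D(\mu\mathbf1_{D\setminus W})$ is $(P^D_t)$-excessive and, by \cite[Theorem III.5.7]{BG}, harmonic on $W$, hence annihilates $L^*\eta$ locally. Finally, the limit of decreasing harmonic functions, once it is shown to satisfy \eqref{eq.tharm} on every $V\subset\subset D$, yields harmonicity via Proposition~\ref{prop.harm} applied to an exhausting sequence together with the definition \eqref{eq.har}, and its excessive regularization gives the desired $h$.
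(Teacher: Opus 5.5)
Your Steps 1--2 follow the paper: mollify $\mathbf 1_Du$, apply Dynkin's formula on $D^\delta$, use Deny's theorem and Lemma~\ref{lm2.2}, then let $\delta\searrow0$. Your conclusion differs. You take $h:=\lim_\delta\gamma_\delta=u-R^D\mu-R^D\nu^{*,D}_u$ and verify $\int_D hL^*\eta=0$ directly, whereas the paper asserts $\sigma_\delta\equiv0$ and obtains $h$ as the monotone limit of the harmonic parts $h_\delta$. Your ending is the sound one, because the claim you share with the paper is false: testing on $D^\delta$ does not make the Riesz measure of $\gamma_\delta$ vanish.

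For $\eta\in C^2_c(D^\delta)$, the representation gives $-\int_{D^\delta}uL^*\eta=(\sigma_\delta+\mu+\nu^{*,D}_u,\eta)$. On the other hand, $-Lu=\mu$ in $\mathscr D'(D)$ gives $-\int_{D^\delta}uL^*\eta=(\mu+\nu^{*,D}_u,\eta)+(\nu^{*,D^\delta}_{u\mathbf 1_{D\setminus D^\delta}},\eta)$, since the collar term $\int_{D\setminus D^\delta}uL^*\eta$ is not zero. So $\sigma_\delta$ is the jump-in measure from the collar $D\setminus D^\delta$. Only its vague limit vanishes, which is all that Theorem~\ref{th.main1} needs.

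Concretely, take $\mu=0$ and let $u$ be a positive $(P^D_t)$-harmonic function vanishing off $D$, for instance $(1-|x|^2)_+^{s-1}$ for $-(-\Delta)^s$ on the unit ball. Then $\gamma_\delta=u$ on $D^\delta$, and it is a pure potential because the isotropic stable process leaves balls by a jump. So its harmonic part is $0$ for every $\delta$, while the corollary must return $h=u$. You should therefore drop the assertion that $\gamma_\delta$ is harmonic, and also the closing appeal to the limit of decreasing harmonic functions. Step 2 needs only $\gamma_\delta\ge0$, and the harmonicity of $h$ has to come entirely from Step 3.

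In Step 3, exhausting $D$ by sets $V\subset\subset D$ is the wrong tool. Proposition~\ref{prop.harm} on $V$ tests $\int_V hL^*\eta$, that is, the truncation $h\mathbf 1_V$. Your identity $\int_D hL^*\eta=0$ gives $-\int_V hL^*\eta=(\nu^{*,V}_{h\mathbf 1_{D\setminus V}},\eta)$, which is not $0$ in general; a $(P^D_t)$-harmonic function is only $(P^V_t)$-excessive on $V$. You must apply Proposition~\ref{prop.harm} on $D$ itself, to $h\mathbf 1_D$.

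That proposition is stated for $L^1(V)$, and $u\in L^1(D)$ is not among the hypotheses. What its proof actually uses is finiteness of $\int h\,|L^*\eta|$ and the passage to the limit in the mean-value identity for $h\ast j_\varepsilon$. These follow from $0\le h\le u$, $u\in\mathscr T_\nu(D)$ and Lemma~\ref{lm.test1}, and you should say so. The paper applies the proposition the same way in Lemma~\ref{equiv-dirichlet}.

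For unbounded $\mu$ and $\nu^{*,D}_u$, truncation is simpler than your splitting with a harmonic remainder. Lemma~\ref{lm.duoper} applies to the finite measures $\mu\mathbf 1_{D^{\delta'}}$ and $\nu^{*,D}_u\mathbf 1_{D^{\delta'}}$. Then let $\delta'\searrow0$ by dominated convergence, with dominating function $u\,|L^*\eta|$. This is legitimate because Step 2 gives $R^D\mu+R^D\nu^{*,D}_u\le u$.
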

\begin{proof}
Repeating the argument in the proof of Theorem~\ref{th.main1} leading
to \eqref{eq.delta}, with $u^p$ replaced by $\mu$, $D_*$ replaced by
$D$, and $\varepsilon=\varepsilon_0=0$, we obtain
\[
u=h_\delta+R^{D^\delta}\sigma_\delta+R^{D^\delta}\mu+R^{D^\delta}\nu^{*,D}_u
\quad\text{a.e. in }D^\delta,
\]
where $\sigma_\delta$ is a positive Radon measure and $h_\delta$ is a
$(P_t^{D^\delta})$-harmonic function. Continuing the argument
following \eqref{eq.delta} and using the identity
\[
-Lu=\mu
\quad\text{in }\mathscr D'(D),
\]
we conclude that $\sigma_\delta\equiv0$. Consequently, the family
$(h_\delta)$ is increasing. The monotone convergence theorem,
Proposition~\ref{prop.green}, and Remark~\ref{rem.imp} therefore yield a
$(P_t^D)$-harmonic function $h$ such that
\[
u=h+R^D\mu+R^D\nu^{*,D}_u\quad\text{a.e. in }D.
\]
\end{proof}

\begin{corollary}
\label{cor.repr194}
If $u$ is a classical solution to \eqref{eq1.1class1}, then $u\in L^p(D)$ and for some $k\ge 0$,
\begin{equation}
\label{eq.repfor23}
u(x)=R^Du^p(x)+kG_D(x,0),\quad x\in D.
\end{equation}
\end{corollary}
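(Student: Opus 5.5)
The plan is to reduce the statement to Theorem~\ref{th.main1} and Corollary~\ref{rem.ineq}, and then to show that the two extra terms in the Riesz decomposition vanish because $u=0$ on $D^c$.

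First, $u$ is continuous on $\mathbb R^d\setminus\{0\}$, vanishes off $D$, and $D$ is bounded. Hence $u$ is bounded on $\mathbb R^d\setminus B_r$ for every $r>0$. Moreover $u\in L^1_{\rho_{K,D_*}}(\mathbb R^d)$ for each $K\subset\subset D_*$: the weight $\rho_{K,D_*}$ is bounded, $u$ is bounded on $K$, and $u$ is supported in the bounded set $\overline D$. Thus $u\in\mathscr T_\nu(D_*)\cap L^p_{loc}(D_*)$. By Remark~\ref{rem.cldist}, $u$ is a distributional solution of \eqref{eq1.0}. Theorem~\ref{th.main1} then gives $u\in\mathscr T_\nu(D)\cap L^p_{loc}(D)$ and $-Lu=u^p+k\delta_0$ in $\mathscr D'(D)$ for some $k\ge0$.

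Next I apply Corollary~\ref{rem.ineq} with $\mu=u^p+k\delta_0$. Since $u=0$ on $D^c$, the measure $\nu^{*,D}_u$ defined in \eqref{eq.defnust} is zero. So
\[
u=h+R^Du^p+kG_D(\cdot,0)\quad\text{a.e. in }D,
\]
with $h$ a positive $(P^D_t)$-harmonic function.

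The main obstacle is to prove $h=0$, which is where the exterior condition and continuity must be used. The approach is to compare $h$ with $u$ near $\partial D$ through the killed process. For the open sets $D^\delta=o_\delta(D)$, harmonicity (Remark~\ref{rem.harmf}) gives $h(x)=\mathbb E_x[(\mathbf1_Dh)(X_{\tau_{D^\delta}})]$. Since $0\le h\le u$ and $u$ is bounded and continuous on $\{\delta_D<\delta\}$ with $u=0$ on $D^c$, I would argue as follows:
\begin{itemize}
\item Dominated convergence as $\delta\to0$ gives $\mathbb E_x[u(X_{\tau_{D^\delta}});X_{\tau_{D^\delta}}\in D]\to0$.
\item This uses quasi-left continuity: $X_{\tau_{D^\delta}}\to X_{\tau_D}\in D^c$ on the event where the exit time is approached continuously.
\item On the complementary event the exit jumps, and $X_{\tau_{D^\delta}}=X_{\tau_D}\notin D$ eventually.
\end{itemize}
One must also handle the singularity at $0$. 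For $x$ fixed and $\delta$ small, the process started at $x$ starting away from $0$ does not hit $\{0\}$, because points are polar when $2s<d$. This lets me localize the bound $h\le u$ to sets bounded away from $0$. If $h\le u$ holds only a.e., I would replace it by an excessive version using fine continuity (the appendix results) before taking the expectation. This yields $h=0$.

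Finally, $u\in L^p(D)$ has to be shown globally. Near $\partial D$ this is immediate from boundedness of $u$ away from $0$. Near $0$ it follows from $u\in L^p_{loc}(D)$, given by Theorem~\ref{th.main1}. Upgrading the representation from a.e. to every $x\in D$ uses the continuity of $u$ on $D_*$ together with the fact that $R^Du^p+kG_D(\cdot,0)$ is excessive. Two excessive (or continuous) functions that agree a.e. agree everywhere on $D_*$ by the fine-topology results. At $x=0$ both sides equal $+\infty$, since $G_D(0,0)=\infty$ (or by convention).
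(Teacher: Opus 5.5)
Your route is the paper's: Remark~\ref{rem.cldist} and Theorem~\ref{th.main1} give $-Lu=u^p+k\delta_0$ in $\mathscr D'(D)$. Corollary~\ref{rem.ineq} together with $\nu^{*,D}_u\equiv0$ gives $u=h+R^Du^p+kG_D(\cdot,0)$. The harmonic part is then removed by exhausting $D$, using that $u$ is continuous off the origin and vanishes on $D^c$.

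The only real difference is in that last step. The paper writes $\mathbb E_xh(X_{\tau_V})$ as $\mathbb E_xu(X_{\tau_V})$ minus the harmonic-measure averages of $R^Du^p$ and $kG_D(\cdot,0)$. It then invokes \cite[Proposition VI.2.10]{BG} to make those averages vanish as $V\uparrow D$. You instead use $0\le h\le u$ on $D_*$, which is shorter and avoids both that proposition and any finiteness issue in the subtraction.

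You can simplify further, because quasi-left continuity and polarity of points are not needed. For $\delta<\operatorname{dist}(0,D^c)$ the exit point $X_{\tau_{D^\delta}}$ lies in $(D^\delta)^c$, away from $0$. Moreover $\sup_{D\setminus D^\delta}u\to0$, since $u$ is continuous on the compact set $\overline D\setminus o_{\delta_0}(D)$ and $u=0$ on $\partial D$. Your fine-continuity upgrade of $h\le u$ to every point of $D_*$, and your derivation of $u\in L^p(D)$ from $u\in L^p_{loc}(D)$ plus boundedness near $\partial D$, make explicit what the paper leaves tacit.

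One step is argued incorrectly: the claim $u\in\mathscr T_\nu(D_*)$.
\begin{itemize}
\item The weight $\rho_{K,D_*}$ is not concentrated on $K$ and need not vanish near the origin. If $K$ is an annulus around $0$, it is bounded below on a neighbourhood of $0$. This is exactly the computation in Remark~\ref{rem.incl123} showing $\mathscr T_\nu(D_*)\subset L^1_{loc}(D)$.
\item So ``bounded weight and support in $\overline D$'' gives $\int u\,\rho_{K,D_*}<\infty$ only if you already know that $u$ is integrable near $0$. That does not follow from continuity on $\mathbb R^d\setminus\{0\}$, and boundedness of $u$ on $K$ plays no role.
\end{itemize}

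Membership in $\mathscr T_\nu(D_*)$ (here equivalent to $u\in L^1(D)$) is a hypothesis of both Remark~\ref{rem.cldist} and Theorem~\ref{th.main1}. You therefore have two options:
\begin{itemize}
\item build it into the notion of classical solution, which is what the paper's proof does tacitly; or
\item derive it from finiteness of $\int|u(x+y)-u(x)-y\cdot\nabla u(x)\mathbf 1_{B(0,1)}(y)|\,\nu(dy)$ at points $x\neq0$.
\end{itemize}
The second option is immediate when $\nu$ has a density comparable to $|y|^{-d-2s}$, since finiteness at a single point already gives $u\in L^1(B_\varepsilon)$. It is not immediate for a singular spectral measure such as the cylindrical one. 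There, pointwise finiteness only controls $u$ along the lines through $x$ in the directions of $\operatorname{supp}\lambda$.
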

\begin{proof}
By Remark \ref{rem.cldist} and Theorem \ref{th.main1} $u$ is a distributional solution to $-Lu=u^p+k\delta_{0}$ in $D$. Thus, by Corollary \ref{rem.ineq}, 
\[
u(x)=h(x)+R^Du^p(x)+kG_D(x,0)+R^{D}\nu^{*,D}_{u}(x).
\]
Since $u=0$ on $D^c$, we have $\nu^{*,D}_{u}\equiv 0$. Furthermore, from  the above equality and  the definition of a $(P^D_t)$-harmonic function
we infer that for any open $V\subset\subset D$
\begin{equation}
\label{eq.harmrepr1}
u(x)=\mathbb E_xu(X_{\tau_V})+R^Du^p(x)+kG_D(x,0)-\mathbb E_x[R^Du^p(X_{\tau_V})]-k\mathbb E_xG_D(X_{\tau_V},0),\quad x\in V.
\end{equation}
Since $u\in C(\mathbb R^d\setminus\{0\})$, we deduce that $\mathbb E_xu(X_{\tau_V})\to 0$ when $V\uparrow D$. On the other hand,
by \cite[Proposition VI.2.10]{BG}, $\mathbb E_x[R^Du^p(X_{\tau_V})]+k\mathbb E_xG_D(X_{\tau_V},0)\to 0$ as $V\uparrow D$ for $x\in V$.
\end{proof}

\begin{corollary}
\label{col.clhol}
If \(u\) is a classical solution of \eqref{eq1.1class1}, then precisely one of the following alternatives holds. 
Either \(k>0\) in \eqref{eq.repfor23}, in which case there exist \(c>1\) and \(\varepsilon>0\) such that 
\[
kG_D(x,0)\leq u(x)\leq ckG_D(x,0), \qquad x\in B_\varepsilon(0),
\]
or \(k=0\), in which case \(u\) is locally H\"older continuous in \(D\).
\end{corollary}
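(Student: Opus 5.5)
The proof starts from the representation of Corollary~\ref{cor.repr194}: $u=R^Du^p+kG_D(\cdot,0)$ in $D$ with $u\in L^p(D)$. Throughout we work in the subcritical range $p\in(1,d_s)$, as in the introduction. The two alternatives are handled separately. The lower bound $u\ge kG_D(\cdot,0)$ is immediate, since $R^Du^p\ge0$. For the upper bound when $k>0$, we have to show $R^Du^p\le C\,kG_D(\cdot,0)$ near $0$. For the Hölder continuity when $k=0$, we bootstrap integrability through Corollary~\ref{green-young}.

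\emph{Case $k=0$.} Here $u=R^Du^p$ with $u\in L^p(D)$, i.e. $u^p\in L^1(D)$. By \eqref{conv-ineq} and Corollary~\ref{green-weak}, $u\le g_D\ast(\mathbf1_Du^p)$, and $g_D\in M^{d_s}$. This gives $u\in L^{r}(D)$ for every $r<d_s$, and more generally a gain of integrability via the Young-type estimates of Corollary~\ref{green-young}(2): if $u^p\in L^q$ then $u\in L^r$ with $1/r=1/q-1/d_s'$, hence $u^p\in L^{r/p}$. Since $p<d_s$, the map $1/q\mapsto p(1/q-1/d_s')=p/q-p(1-1/d_s)$ decreases the exponent by a fixed amount once we start from $q$ slightly above $1$; equivalently, $p<d_s$ means $p/d_s<1$, so we start from $u^p\in L^{q_0}$ with $q_0>1$ close to $1$. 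Finitely many iterations reach $u^p\in L^q$ with $q>d_s'$, and then Corollary~\ref{green-young}(1) gives $u\in L^\infty(D)$. Local Hölder continuity of $R^D f$ for bounded $f$ then remains to be shown. I would obtain it from the scaling property \eqref{scaling_def} together with a Hölder estimate for $L$-harmonic/Green potentials of bounded functions. For instance, $u$ is a bounded distributional solution of $-Lu=u^p\in L^\infty$, and interior Hölder regularity for such equations under only the symbol comparability \eqref{ellipticity} is known (e.g. via the regularity theory for stable-like nonsymmetric operators). If such a result is not available in citable form, I would prove it through the heat kernel: $G_D$ differs from $g$ by an excessive correction that is $(P^D_t)$-harmonic, and translation invariance gives $|g\ast f(x+h)-g\ast f(x)|\le\|f\|_\infty\|g(\cdot+h)-g\|_{L^1(B)}$, which is $O(|h|^{\beta})$ by the scaling of $g$, i.e. $g(\lambda z)=\lambda^{2s-d}g(z)$.

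\emph{Case $k>0$.} We must bound $R^Du^p\lesssim G_D(\cdot,0)$ in a small ball. Split $u^p\le 2^{p-1}\bigl((R^Du^p)^p+k^pG_D(\cdot,0)^p\bigr)$. Away from the origin, classical continuity gives $u$ locally bounded, so the contribution of $u^p\mathbf1_{D\setminus B_\varepsilon}$ to $R^Du^p$ is a bounded function on $B_{\varepsilon/2}$. The key point is that $G_D(\cdot,0)$ is bounded below by a positive constant on $B_\varepsilon$, by positivity and lower semicontinuity of the Green function; hence bounded functions are dominated by $G_D(\cdot,0)$ there. For the part on $B_\varepsilon$, $u^p\le C G_D(\cdot,0)^p$ plus the bootstrap part. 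I would then estimate $R^{D}[G_D(\cdot,0)^p\mathbf1_{B_\varepsilon}]$ using the scaling of $g$: $G_D(\cdot,0)^p\le g^p$, and $g^p\ast$-type convolutions behave like $|x|^{2s-(d-2s)p}\cdot$(directional factor). Since $(d-2s)p<d$, this is $o(g)$ as $x\to0$ in the homogeneous sense.

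The main obstacle is exactly this comparison $R^D[G_D(\cdot,0)^p]\le C G_D(\cdot,0)$ without pointwise bounds on $G_D$; the paper explicitly stresses that such an estimate is unavailable. My fallback therefore avoids it: use the Harnack-free bound above only to obtain $R^D[u^p\mathbf1_{B_\varepsilon}]\in L^\infty_{loc}$ in the subcritical range. I would try to show $G_D(\cdot,0)^p\in L^{q}(B_\varepsilon)$ for some $q>d_s'$ fails in general, so instead bootstrap $R^Du^p$ itself: $u^p\le C(k^pG_D(\cdot,0)^p+(R^Du^p)^p)$ with $G_D(\cdot,0)^p\in M^{d_s/p}$, $d_s/p>1$. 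Iterating Corollary~\ref{green-young} yields $R^Du^p\in L^\infty(B_\varepsilon)$, and then boundedness plus $\inf_{B_\varepsilon}G_D(\cdot,0)>0$ gives $u\le kG_D(\cdot,0)+C\le c\,kG_D(\cdot,0)$ on $B_\varepsilon$.
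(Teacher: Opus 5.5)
Your treatment of the case $k=0$ is essentially the paper's: bootstrap $u=R^Du^p$ through Corollary~\ref{green-young} to get $u\in L^\infty(D)$, then cite interior regularity (the paper uses \cite[Theorem~1.1]{DipierroRosOtonSerraValdinoci2022}). Your check that $p<d_s$ is exactly what lets the iteration start is correct. The heat-kernel alternative would need more than homogeneity of $g$ to give an $L^1$-modulus $O(|h|^\beta)$, but you do not need it.

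The case $k>0$ has a genuine gap. Your fallback asserts that iterating Corollary~\ref{green-young} gives $R^Du^p\in L^\infty(B_\varepsilon)$, and this is false in general. Since $u\ge h:=kG_D(\cdot,0)$, you have $R^Du^p\ge k^pR^D[G_D(\cdot,0)^p]$. Take the fractional Laplacian, where $G_D(x,y)\asymp|x-y|^{2s-d}$ for $x,y$ near $0$. Then $R^Du^p(x)\gtrsim\int_{B_\rho}|x-y|^{2s-d}|y|^{-(d-2s)p}\,dy$, which blows up as $x\to0$ as soon as $p\ge d_s-1=2s/(d-2s)$. When $d\ge 4s$ this covers every admissible $p>1$.

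The bootstrap breaks because every substitution re-injects the fixed source $h^p\in M^{d_s/p}$. Corollary~\ref{green-young}(1) makes $R^D[h^p]$ bounded only if $d_s/p>d_s'$, i.e.\ $p<d_s-1$, which is precisely the integrability you yourself say fails. Writing $T(w):=R^Dw^p$, iteration improves only the pure iterates $T^n(u)$. The terms $T^m(h)$ remain singular at $0$ and must be compared with $h$ pointwise, not with a constant.

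Your first attempt via homogeneity of $g$ does not supply this comparison either. It needs an upper bound $g(z)\lesssim|z|^{2s-d}$, i.e.\ a bounded angular profile. That is not available under the present assumptions: the paper stresses that $G_D(\cdot,0)$ may even fail to be locally bounded in $D\setminus\{0\}$. The attempt is also not made rigorous for $G_D(x,y)$ with both variables near the origin.

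The missing ingredient is the pointwise estimate the paper takes from Grigor'yan--Verbitsky \cite[(1.13)]{GV}. Because a positive solution of $u=T(u)+h$ is given, that result yields $T(h)=R^D[h^p]\le Ch$; the bound is a consequence of existence, not something derived from kernel bounds. Iterating gives $T^{m+1}(h)\le C_m^pT(h)\le C_{m+1}h$. The paper then uses
\[
u\le c(n,p)\Bigl[T^n(u)+\sum_{m=0}^{n-1}T^m(h)\Bigr].
\]
It takes $n=n_0+1$ so that $T^{n}(u)$ is bounded; this is your bootstrap, applied to $u$ alone. It bounds the sum by $ch$, and absorbs the bounded term using $\inf_{B_\varepsilon}G_D(\cdot,0)>0$.
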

\begin{proof}
 Set \(h:=kG_D(\cdot,0)\).  By Corollary \ref{cor.repr194}
\[
u(x)=T(u)(x)+h(x),\quad x\in D,
\]
where  $T(w):= R^Dw^p$ for  positive $w\in L^p(D)$. 
By Corollary \ref{green-young} there exists $n_0\ge 1$ such that $T^n(u)\in L^\infty(D),\, n\ge n_0$.
If \(k=0\), it follows that \(u\) is bounded, and the interior regularity result \cite[Theorem~1.1]{DipierroRosOtonSerraValdinoci2022} 
yields the local H\"older continuity of \(u\). Suppose now that \(k>0\). 
Repeated substitution in the identity \(u=T(u)+h\) gives
\[
u\le c(n,p)\left [T^n(u)+\sum_{m=0}^{n-1}T^m(h)\right],\quad n\ge 1,
\]  
with the convention $T^0:= Id$. By \cite[(1.13)]{GV} there exists $c>0$ such that $\sum_{m=0}^{n_0}T^m(h)\le ch$ in $D$. 
Since $T^{n_0+1}(u)\in C_b(\mathbb R^d)$ and $G_D(\cdot,0)$ is strictly positive and lower semicontinuous, 
the bounded term is dominated by a constant multiple of \(G_D(\cdot,0)\) in a sufficiently small neighbourhood of the origin. 
The asserted two-sided estimate follows.
\end{proof}

\section{Dirichlet problem for semilinear equations with measure data}
\label{sec7}
For a positive bounded  measure $\mu$ 
on $D$ and a Carath\'eodory function $g:D\times\mathbb R \to\mathbb R^+$ we consider the exterior Dirichlet problem 
\begin{equation}
\label{eq1.111}
\begin{cases}
-Lu=g(\cdot,u)+\mu,\quad &\text{in}\quad D, \\
u=0,\quad &\text{in}\quad\mathbb{R}^d\setminus D,
\end{cases}
\end{equation}

\begin{definition}
We say that $u\in p\mathscr B(\mathbb R^d)$ is a solution to \eqref{eq1.111}
if $g(\cdot,u)\in L^1(D)$ and 
\[
u=R^Dg(\cdot,u)+R^D\mu\quad\text{in }\mathbb R^d.
\]
\end{definition} 

For a positive Borel measurable function
$u$ on $\mathbb R^d$, we   will formulate  the following domination  condition: 
\begin{equation}
\label{zero-boundary-cond}
\text{there exists a positive Borel measure }\,  \mathfrak m \,\text{ on }\,  D\, \text{ such that }\,  u \le R^D\mathfrak m < \infty \,\text{ a.e. in }\mathbb R^d.
\end{equation}
The following lemma clarifies the above condition.

\begin{lemma}\label{equiv-dirichlet}
Let $u\in\mathscr T_\nu(D)$ be  positive and $g(\cdot,u)\in L^1(D)$. 
Suppose that  $-Lu=g(\cdot,u)+\mu$ in $\mathscr D'(D)$. Then
\[
u \text{ is a solution to }\eqref{eq1.111} \iff \text{condition }\eqref{zero-boundary-cond}\text{ is satisfied.} 
\]
\end{lemma}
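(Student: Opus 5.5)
The plan is to compare the solution definition with the Riesz-type decomposition of Corollary~\ref{rem.ineq}, applied with $\mu$ replaced by the positive Radon measure $g(\cdot,u)\,dx+\mu$.

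\emph{The implication} ($\Rightarrow$). If $u$ is a solution, then $u=R^D g(\cdot,u)+R^D\mu=R^D\mathfrak m$ with
\[
\mathfrak m:=g(\cdot,u)\,dx+\mu .
\]
This is a bounded positive measure on $D$, because $g(\cdot,u)\in L^1(D)$. It remains to check that $R^D\mathfrak m<\infty$ a.e. This follows from Lemma~\ref{lm.duoper}, since $R^D\mathfrak m\in M^{d_s}$, or alternatively from Lemma~\ref{lm2.2}(i). On $D^c$ both sides vanish, because $G_D$ is set to zero outside $D\times D$. Hence \eqref{zero-boundary-cond} holds.

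\emph{The converse} ($\Leftarrow$). Assume \eqref{zero-boundary-cond}. The argument proceeds in four steps.

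First, $u\le R^D\mathfrak m=0$ a.e. on $D^c$, so $u=0$ a.e. outside $D$. Consequently the exterior measure from \eqref{eq.defnust} vanishes: $\nu^{*,D}_u\equiv 0$.

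Second, Corollary~\ref{rem.ineq} provides a $(P^D_t)$-harmonic $h\ge 0$ with
\[
u=h+R^Dg(\cdot,u)+R^D\mu \quad\text{a.e. in } D .
\]

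Third, we must show $h=0$. From the previous display, $h\le u\le R^D\mathfrak m$ a.e., and both $h$ and $R^D\mathfrak m$ are excessive. Since excessive functions that agree a.e. agree everywhere (they are determined by $\alpha R_\alpha^D$), we get $h\le R^D\mathfrak m$ everywhere in $D$. Thus $h$ is a harmonic function dominated by a potential that is finite a.e. Now take an exhaustion $V_n\uparrow D$ by open sets $V_n\subset\subset D$. By harmonicity and Remark~\ref{rem.harmf},
\[
h(x)=\mathbb E_x\bigl[h(X_{\tau_{V_n}})\bigr]\le \mathbb E_x\bigl[R^D\mathfrak m(X_{\tau_{V_n}})\bigr].
\]
By \cite[Proposition VI.2.10]{BG}, as already used in the proof of Corollary~\ref{cor.repr194}, the right-hand side tends to $0$ for $x$ with $R^D\mathfrak m(x)<\infty$. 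Hence $h=0$ a.e. Equivalently, one can invoke uniqueness of the Riesz decomposition: the greatest harmonic minorant of a potential is zero.

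Fourth, with $h=0$ the decomposition becomes $u=R^Dg(\cdot,u)+R^D\mu$ a.e. in $\mathbb R^d$. Both sides vanish on $D^c$, and modifying $u$ on a null set gives the identity required in the definition of a solution.

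The main obstacle is the third step. The function $\mathfrak m$ need not be bounded, and $R^D\mathfrak m$ is only finite a.e. So we must make sure the limit $\mathbb E_x[R^D\mathfrak m(X_{\tau_{V_n}})]\to 0$ is legitimate for a general positive Borel measure $\mathfrak m$. This comes down to the standard fact that a potential of a measure, finite at $x$, has vanishing harmonic part; if needed, we approximate $\mathfrak m$ by its restrictions to compact subsets of $D$ and use monotone convergence.
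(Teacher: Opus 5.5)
Your proof is correct and takes essentially the paper's route: both arguments use the Riesz-type decomposition of Corollary~\ref{rem.ineq}, together with the fact that a $(P^D_t)$-harmonic function dominated by a potential $R^D\mathfrak m$ that is finite a.e.\ must vanish (the paper cites \cite[Theorem~VI.2.11]{BG} for this). The only difference is that the paper sets $w:=u-R^Dg(\cdot,u)-R^D\mu$, uses Corollary~\ref{rem.ineq} only to get $w\ge0$, and checks harmonicity of $w$ separately via Proposition~\ref{prop.harm}; you instead read harmonicity off Corollary~\ref{rem.ineq} directly after noting that $u=0$ a.e.\ on $D^c$ forces $\nu^{*,D}_u\equiv0$, which is slightly more economical and makes explicit a step the paper leaves implicit.
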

\begin{proof}
The forward implication follows directly from the integral formulation. 
Conversely, assume that \eqref{zero-boundary-cond} holds and set 
\[
w:=u-R^Dg(\cdot,u)-R^D\mu. 
\]
Remark~\ref{rem.ineq} implies that \(w\geq0\). Moreover, for every \(\eta\in C_c^2(D)\), 
\[ 
-(u,L^*\eta) = (\eta,g(\cdot,u)+\mu) = -(L^*\eta,R^Dg(\cdot,u)+R^D\mu), 
\] and hence 
\[ 
(w,L^*\eta)=0,\quad \eta\in C_c^2(D).
\] 
Proposition~\ref{prop.harm} therefore shows that \(w\) is \((P_t^D)\)-harmonic. 
Since \(w\leq R^D\mathfrak m\), \cite[Theorem~VI.2.11]{BG} implies that \(w=0\). Thus 
\[
u=R^Dg(\cdot,u)+R^D\mu, 
\]
as required.
\end{proof}

\begin{corollary}
\label{col.12}
Suppose that $u=0$ a.e. on $D^c$.
\begin{enumerate}
\item  If $u\in H^s_0(D)$, then $|u|$ satisfies \eqref{zero-boundary-cond}.
\item If $u\in L^\infty(D)$, $s\in (1/2,1)$ and $D$ is  a Lipschitz domain,  then $|u|$ satisfies \eqref{zero-boundary-cond}.
\item If $u\in L^\infty(D)$, $L$ is symmetric  and $D$ is  a Lipschitz domain,  
then $|u|$ satisfies \eqref{zero-boundary-cond}.
\item If $u\in L^1(D)$ is continuous on $\mathbb R^d\setminus o_\delta(D)$ for some $\delta>0$ and $-Lu=\mu$ in $\mathscr D'(D)$
for a positive Radon measure $\mu$ on $D$, then $|u|$ satisfies \eqref{zero-boundary-cond}.
\end{enumerate} 
\end{corollary}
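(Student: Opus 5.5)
The plan treats the four items separately. In each one we exhibit a positive measure $\mathfrak m$ on $D$ with $|u|\le R^D\mathfrak m<\infty$ a.e. Since $u=0$ on $D^c$ and $R^D\mathfrak m$ also vanishes there by convention, only the inequality on $D$ needs checking.

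\emph{Item (1).} Since $D$ is bounded, $\mathcal E$ is coercive on $H^s_0(D)$ by the Poincaré inequality implied by \eqref{ellipticity}. This lets us work with $0$-order potentials for the part of $(\mathcal E,\mathscr D(\mathcal E))$ on $D$. Because normal contractions operate on Dirichlet forms, $|u|\in H^s_0(D)$. We then take the $0$-reduced function (the smallest $(P^D_t)$-excessive majorant) $e$ of $|u|$. By the standard theory of (non-symmetric) Dirichlet forms satisfying the sector condition (Ma--Röckner, Oshima), $e\in H^s_0(D)$, $e\ge|u|$ q.e., and $e=R^D\mathfrak m$ for a positive measure $\mathfrak m$ of finite energy integral. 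Since $e\in L^2(D)$, it is finite a.e., which gives \eqref{zero-boundary-cond}.

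\emph{Items (2) and (3).} It suffices to show $\mathbf 1_D\le R^D\mathfrak m<\infty$, because then $|u|\le\|u\|_\infty R^D\mathfrak m$. We take
\[
\mathfrak m:=\nu^{*,D}_{\mathbf 1_{D^c}},
\]
which is a positive Radon measure on $D$ by Remark~\ref{rem.imp}. The Ikeda--Watanabe computation from Lemma~\ref{lm.hr1}, applied with $V=D$ (monotone limit over $V\uparrow D$ via Proposition~\ref{prop.green}), gives
\[
R^D\mathfrak m(x)=\mathbb P_x\bigl(X_{\tau_D}\in \overline D^{\,c}\bigr)\le 1 .
\]
So everything reduces to showing that the process does not exit $D$ through $\partial D$, i.e.\ $\mathbb P_x(X_{\tau_D}\in\partial D)=0$ for $x\in D$. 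Once this holds, $R^D\mathfrak m=1$ on $D$.

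This is the main obstacle, and it is exactly where the hypotheses enter: Lipschitz boundary, together with either $s>1/2$ or symmetry. For the symmetric case I would invoke the known absence of creeping for pure-jump symmetric stable processes combined with the Lebesgue-null boundary of Lipschitz domains. For $s>1/2$ I would use the exterior cone condition and the non-creeping property of strictly stable processes with index $2s>1$, citing the relevant literature on exit distributions. If needed, we can settle for $\mathbf 1_D\le c\,R^D\mathfrak m$ on compact subsets and handle the boundary layer with the same argument.

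\emph{Item (4).} First, $u\ge0$: it is $L$-superharmonic in $D$, vanishes outside $D$, and is continuous near $\partial D$, so the minimum principle applies. Corollary~\ref{rem.ineq} then gives $u=h+R^D\mu+R^D\nu^{*,D}_u$ with $\nu^{*,D}_u\equiv0$, because $u=0$ on $D^c$. Since $0\le h\le u$, for $V\subset\subset D$ with $V\supset \overline{o_\delta(D)}$ harmonicity gives
\[
h(x)=\mathbb E_x h(X_{\tau_V})\le \mathbb E_x u(X_{\tau_V}).
\]
Because $u$ is continuous on the strip $\mathbb R^d\setminus o_\delta(D)$ and vanishes outside $D$, the argument of Corollary~\ref{cor.repr194} shows that this right-hand side tends to $0$ as $V\uparrow D$. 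Hence $h=0$, so $u=R^D\mu$, which is finite a.e.\ since $u\in L^1(D)$. Taking $\mathfrak m=\mu$ gives \eqref{zero-boundary-cond}.
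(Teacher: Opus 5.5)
Your architecture is the paper's. In (1) the reduced function of $|u|$ is a finite-energy potential; the paper simply cites Oshima for this. In (4) you use \eqref{eq4.4ad} with $\nu^{*,D}_u=0$ and remove $h$ as in Corollary~\ref{cor.repr194}. Your preliminary positivity step there is only asserted, but it addresses something the paper leaves implicit, since Corollary~\ref{rem.ineq} requires $u\ge0$. In (2)--(3) your measure $\mathfrak m=\nu^{*,D}_{\mathbf 1_{D^c}}$, i.e.\ $\mathfrak m(dy)=\nu(D^c-y)\,dy$, is exactly the killing measure $\kappa_D$ for which the paper shows $R^D\kappa_D=1$.

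The gaps are in (2)--(3). First, the limit $V\uparrow D$ in Lemma~\ref{lm.hr1} (or the L\'evy system formula applied directly) yields $R^D\mathfrak m(x)=\mathbb P_x(X_{\tau_D-}\in D)$. This is the probability of leaving $D$ by a jump from an interior point, not $\mathbb P_x(X_{\tau_D}\in\overline D^{\,c})$: a path that reaches $\partial D$ continuously and then jumps out is counted in the second quantity but not in the first. So the real target is $\mathbb P_x(X_{\tau_D-}\in\partial D)=0$. The condition $\mathbb P_x(X_{\tau_D}\in\partial D)=0$ gives it only together with the fact that the process a.s.\ never jumps from a point of $\partial D$. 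That fact is Millar's result (here it also follows from the compensation formula and $\ell^d(\partial D)=0$), and it is precisely the second ingredient the paper combines with Sztonyk's Theorem~1.

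Second, and more seriously, there is no general ``non-creeping property of strictly stable processes with index $2s>1$''. Take $d=2$ and $X=(X^1,X^2)$, with $X^1$ a spectrally negative strictly $2s$-stable process and $X^2$ an independent symmetric $2s$-stable process. This satisfies \eqref{ellipticity} and the other standing assumptions, but $X^1$ creeps upward. So from every point of the square $D=(-1,0)\times(-1,1)$ the process leaves through $\{0\}\times(-1,1)$ continuously with positive probability.

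In this example even the conclusion of (2) fails for $u=\mathbf 1_D$. Suppose $\mathbf 1_D\le R^D\mathfrak m<\infty$ a.e. Excessivity gives $R^D\mathfrak m\ge\alpha R^D_\alpha R^D\mathfrak m\ge\alpha R^D_\alpha\mathbf 1_D\to1$ as $\alpha\to\infty$, so $R^D\mathfrak m\ge1$ everywhere on $D$. Hence $\mathbb E_xR^D\mathfrak m(X_{\tau_V})\ge\mathbb P_x(X_{\tau_D-}\in\partial D)>0$ for every $V\subset\subset D$. But for a potential this expectation tends to $0$ as $V\uparrow D$, which is the fact used in Corollary~\ref{cor.repr194}.

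Hence item (2) cannot be obtained from $2s>1$, \eqref{ellipticity} and Lipschitz regularity alone. Its whole content lies in the hypotheses of the exit-distribution theorem the paper cites. Your plan has to import that theorem with its precise assumptions rather than appeal to index-based non-creeping. The same applies to (3): absence of continuous exit from a Lipschitz domain is exactly what Sztonyk's theorem supplies, and $\ell^d(\partial D)=0$ only disposes of the jump terms.
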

\begin{proof}
The first assertion follows from \cite[Theorem 1.1.1, Theorem 2.3.1]{Oshima}. As to   the assertions (2) and (3)
first observe that by 
\cite[Theorem 1]{Sztonyk2000} and \cite[Pages 228-229]{Millar1975}  $\mathbb P_x(X_{\tau_D-}\in D,\, \tau_D<\infty)=1,\, x\in D$. 
Now, by   \cite[Theorem 3.5.14(ii)]{Oshima} $R^D\kappa_D=1$ on $D$, where $\kappa_D$ is the so-called killing measure 
(it is a positive Radon measure on $D$). Thus, $|u|\le \|u\|_\infty R^D\kappa_D$. 
For (4) it is enough to apply \eqref{eq4.4ad}, in the same manner as in \eqref{eq.harmrepr1}, and then  pass to the limit with $V\nearrow D$.
Note here that since  $u\in L^1(D)$ and $u=0$ a.e. in $D^c$, we have   $u\in L^1(\mathbb R^d)$,
which combined with Remark \ref{rem.incl123} gives that $u\in\mathscr T_\nu(D)$.
\end{proof}

\begin{corollary}
Suppose that $u$ is a positive function on $\mathbb R^d$
such that  $u=0$ a.e. on $D^c$, $u\in L^1(D)$,  $g(\cdot,u)\in L^1(D)$ and  
$-Lu=g(\cdot,u)+\mu$ in $\mathscr D'(D)$ for some positive Radon measure $\mu$ on $D$.    
If one   of the conditions  (1)--(4) in  Corollary~\ref{col.12} is satisfied, then $u$ is a solution to \eqref{eq1.111}.
\end{corollary}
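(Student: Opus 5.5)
The idea is to reduce the statement to Lemma~\ref{equiv-dirichlet}. That lemma says a positive $u\in\mathscr T_\nu(D)$ with $g(\cdot,u)\in L^1(D)$ and $-Lu=g(\cdot,u)+\mu$ in $\mathscr D'(D)$ solves \eqref{eq1.111} exactly when the domination condition \eqref{zero-boundary-cond} holds. So I would check the two structural hypotheses of that lemma and then read off \eqref{zero-boundary-cond} from Corollary~\ref{col.12}.

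\textbf{Step 1: membership in $\mathscr T_\nu(D)$.} By assumption $u\in L^1(D)$ and $u=0$ a.e. on $D^c$, so $u\in L^1(\mathbb R^d)$. Remark~\ref{rem.incl123} gives $L^1(\mathbb R^d)\subset\mathscr T_\nu(D)$, hence $u\in\mathscr T_\nu(D)$. Positivity, $g(\cdot,u)\in L^1(D)$ and the distributional identity $-Lu=g(\cdot,u)+\mu$ are assumed outright.

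\textbf{Step 2: the domination condition.} Since $u\ge0$, we have $|u|=u$, so in cases (1)--(3) Corollary~\ref{col.12} gives \eqref{zero-boundary-cond} for $u$ directly. Case (4) needs $-Lu=\tilde\mu$ in $\mathscr D'(D)$ for some positive Radon measure $\tilde\mu$ on $D$. I would take
\[
\tilde\mu:=g(\cdot,u)\,dx+\mu .
\]
This is positive because $g\ge0$, and it is Radon because $g(\cdot,u)\in L^1(D)$ and $\mu$ is Radon. Corollary~\ref{col.12}(4) then applies and gives \eqref{zero-boundary-cond}.

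\textbf{Step 3: conclusion.} Lemma~\ref{equiv-dirichlet} now yields $u=R^Dg(\cdot,u)+R^D\mu$ in $\mathbb R^d$. Together with $g(\cdot,u)\in L^1(D)$, this is exactly the definition of a solution to \eqref{eq1.111}.

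The only point I expect to need care is a.e. versus everywhere. Condition \eqref{zero-boundary-cond} and the lemma's conclusion hold a.e., while the definition of solution is stated in $\mathbb R^d$. I would handle this by identifying $u$ with the version $R^Dg(\cdot,u)+R^D\mu$. On $D^c$ both sides vanish, since Green potentials are set to zero outside $D$. I would also confirm that the harmonic-part argument inside Lemma~\ref{equiv-dirichlet}, which uses Corollary~\ref{rem.ineq}, is compatible with $\nu^{*,D}_u=0$; this holds because $u=0$ a.e. on $D^c$.
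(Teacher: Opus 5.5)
Your proposal is correct and matches the intended argument. The paper gives no separate proof, treating the corollary as the direct combination of Lemma~\ref{equiv-dirichlet} with Corollary~\ref{col.12}; your inclusion $u\in L^1(\mathbb R^d)\subset\mathscr T_\nu(D)$ via Remark~\ref{rem.incl123} is the one the paper records in the proof of Corollary~\ref{col.12}(4), and taking $g(\cdot,u)\,dx+\mu$ as the positive Radon measure in case (4) is the right way to meet that hypothesis.
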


\begin{theorem}\label{exist-sol}
Let $p\in(1,{d_s})$ and let $\mu$ be a positive bounded Borel measure on $D$. Then there exists $\varepsilon_0>0$ such that for any $k \in(0,\varepsilon_0]$ 
problem  \eqref{eq1.1} admits a positive solution.
\end{theorem}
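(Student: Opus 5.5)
We will obtain the solution as a fixed point of
\[
\Phi_k(w):=R^D[w^p]+kR^D\mu ,
\]
using Schauder's theorem in $L^p(D)$, as indicated in the introduction. Note that $p<d_s$ by assumption. By Lemma~\ref{lm.duoper}, $R^D\mu\in M^{d_s}$ with $\|R^D\mu\|_{M^{d_s}}\le C\mu(D)$. Since $D$ is bounded, this gives $R^D\mu\in L^p(D)$ with $\|R^D\mu\|_{L^p(D)}\le C_1\mu(D)$. By Lemma~\ref{green-l1}, the map $T_p(w)=R^D[|w|^p]$ is continuous and compact from $L^p(D)$ into itself. Hence $\Phi_k$, restricted to the closed convex set of positive functions, is continuous and compact. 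Its values are positive and vanish outside $D$, with the convention that $G_D=0$ off $D\times D$.

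The key step is to find an invariant ball. By Corollary~\ref{green-young}(2) (or directly from \eqref{conv-ineq}, Corollary~\ref{green-weak} and Young's inequality on the bounded set $D$), the map $R^D$ is bounded from $L^1(D)$ into $L^p(D)$ when $p<d_s$. Hence
\[
\|R^D[w^p]\|_{L^p(D)}\le C_2\|w^p\|_{L^1(D)}=C_2\|w\|_{L^p(D)}^p .
\]
Let $K_\rho:=\{w\in L^p(D): w\ge0,\ \|w\|_{L^p(D)}\le\rho\}$. For $w\in K_\rho$ we then have
\[
\|\Phi_k(w)\|_{L^p(D)}\le C_2\rho^p+kC_1\mu(D).
\]
Since $p>1$, we fix $\rho>0$ with $C_2\rho^{p-1}\le 1/2$ and set $\varepsilon_0:=\rho/(2C_1\mu(D))$. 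For every $k\in(0,\varepsilon_0]$ this gives $\Phi_k(K_\rho)\subset K_\rho$. The set $K_\rho$ is closed, convex and bounded, and $\Phi_k$ is continuous with relatively compact image (Lemma~\ref{green-l1}). Schauder's fixed point theorem therefore yields $u\in K_\rho$ with $u=\Phi_k(u)$ a.e. in $D$.

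It remains to check the definition of a solution. We have $u\in L^p(D)$, so $u^p\in L^1(D)$, and $u=R^Du^p+kR^D\mu$ a.e. To obtain the identity everywhere on $\mathbb R^d$, as the definition requires, we redefine $u$ as the right-hand side $R^Du^p+kR^D\mu$. This is an excessive function that agrees a.e. with the fixed point, so the right-hand side is unchanged, and it equals $0$ on $D^c$. Finally $u\ge kR^D\mu$, which is nonzero because $G_D>0$ and $\mu\ne0$; hence $u$ is a positive solution of \eqref{eq1.1}.

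The main obstacle is the compactness of $T_p$, together with the $L^1\to L^p$ bound. Both are supplied by Lemma~\ref{green-l1} (via Deny's theorem) and by the Marcinkiewicz estimate, so the remaining work is bookkeeping of constants. One point needs care: Lemma~\ref{green-l1} must be used in the form that the continuity and compactness of $w\mapsto R^D[|w|^p]$ hold on bounded subsets of $L^p(D)$. This needs uniform integrability of $(R^D[|w|^p])^p$, which follows because $R^D$ maps $L^1$-bounded sets into sets bounded in $M^{d_s}$, and $p<d_s$.
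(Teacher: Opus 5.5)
Your proof is correct and follows essentially the same route as the paper. Both arguments apply Schauder's fixed-point theorem to $w\mapsto R^D[w^p]+kR^D\mu$ on a small invariant ball, with invariance from the $L^1(D)\to M^{d_s}$ bound of Lemma~\ref{lm.duoper} and compactness from Lemma~\ref{green-l1}. The paper works in $L^{\hat p}(D)$ with $\hat p=(p+d_s)/2$, whereas you work directly in $L^p(D)$, which fits Lemma~\ref{green-l1} exactly as stated; your redefinition of $u$ as $R^Du^p+kR^D\mu$, so that the identity holds everywhere, is a detail the paper leaves implicit.
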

\begin{proof}
Let  $\hat{p}:=(p+{d_s})/2$.
Consider the operator $\mathcal{T}_k:L^{\hat p}(D)\mapsto L^{\hat p}(D)$ 
given by the formula 
\[
\mathcal{T}_k(u):=R^D[|u|^p+k\mu].
\]
By Corollary~\ref{green-young}, Lemma \ref{green-l1} and Lemma~\ref{lm.duoper} the operator  $\mathcal T_k$
is well defined, compact, and  continuous.
Furthermore, by Lemma~\ref{lm.duoper}, we obtain
\begin{equation}
\label{ineq-tn}
\|\mathcal{T}_k(u)\|_{L^{\hat{p}}(D)}\leq C(\|u^p\|_{L^1(D)}+k\mu(D))\leq C(\|u\|^p_{L^{\hat p}(D)}+k\mu(D)),
\end{equation}
where $C$ is a constant that depends on $D,s,d,p$.
Choose $\varepsilon_0>0$ sufficiently small so that there exists a maximal solution $\bar{\lambda}>0$ to the equation
\[
\lambda=C\lambda^p+C\varepsilon_0.
\]
From this point on, fix $k\in(0,\varepsilon_0)$. Consider the set
$\mathcal{G}:=\{u\in L^{\hat p}(D): u\ge0,\, \|u\|_{L^{\hat{p}}(D)}\leq \bar{\lambda}\}$,
which is convex and closed. 
Inequality \eqref{ineq-tn} shows that $\mathcal{T}_k(\mathcal{G})\subset \mathcal{G}$. 
Thus the Schauder fixed point theorem guarantees the existence of $u\in \mathcal{G}$
such that $\mathcal T_k(u)=u$. 
\end{proof}

\begin{theorem}
\label{k-star-sol}
Let $p\in(1,{d_s})$ and $\mu$ be a positive bounded Borel measure on $D$.
Then there exists $0<k_\mu<\infty$ such that for $k\in(0,k_\mu)$ problem  \eqref{eq1.1} 
admits a minimal positive solution, whereas for  $k>k_\mu$ \eqref{eq1.1} admits no positive  solutions.
\end{theorem}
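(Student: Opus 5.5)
We define $k_\mu:=\sup\{k>0:\ \eqref{eq1.1}\text{ admits a positive solution}\}$. By Theorem~\ref{exist-sol} the set over which the supremum is taken contains $(0,\varepsilon_0]$, so $k_\mu>0$. The proof then has three parts: existence of minimal solutions for $k<k_\mu$ by monotone iteration below a barrier, and finiteness of $k_\mu$ by a test-function argument.

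\textbf{Minimal solutions.} Fix $k\in(0,k_\mu)$ and pick $k'\in(k,k_\mu]$ for which a positive solution $\bar u$ exists. Since $R^D$ is positive, $\bar u=R^D\bar u^p+k'R^D\mu$ is a supersolution for the parameter $k$:
\[
\bar u\ge R^D\bar u^p+kR^D\mu .
\]
We run the monotone iteration
\[
u_0:=kR^D\mu,\qquad u_{n+1}:=R^D u_n^p+kR^D\mu .
\]
By induction, using positivity and monotonicity of $w\mapsto R^Dw^p$, we get $u_0\le u_1\le\dots\le\bar u$. Since $\bar u\in L^p(D)$, monotone convergence shows that $u_k:=\lim_n u_n$ belongs to $L^p(D)$ and satisfies \eqref{eq.for.int.int}. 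Any positive solution $v$ satisfies $v\ge u_0$ and, inductively, $v\ge u_n$ for all $n$. Hence $u_k\le v$, so $u_k$ is the minimal solution, and it does not depend on the choice of $\bar u$.

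\textbf{Monotonicity of the solvable set.} The same construction shows that if a solution exists at some $k'$, then solutions exist for every $k<k'$. Hence the solvable set is an interval with left endpoint $0$, and there are no solutions for $k>k_\mu$.

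\textbf{Finiteness of $k_\mu$.} This is the main step. We first try a test function in the integral identity. Take the first-eigenfunction analogue $\varphi:=R^{*,D}1$, which is bounded and positive. Alternatively, to avoid needing an eigenfunction, take $\varphi:=R^{*,D}\psi$ with $0\le\psi\in C_c(D)$ nonzero. Pairing the equation $u=R^Du^p+kR^D\mu$ with $\psi$ and using duality gives
\[
(u,\psi)=(u^p,\varphi)+k(\mu,\varphi).
\]
We need a bound of the form $(u^p,\varphi)\ge c\,(u,\psi)^p-C$, which would give
\[
X\ge cX^p+k(\mu,\varphi)-C,\qquad X:=(u,\psi).
\]
This is impossible for large $k$, provided $(\mu,\varphi)>0$.

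For the bound on $(u^p,\varphi)$: on a compact set $K\subset D$ containing $\operatorname{supp}\psi$, the function $\varphi$ is bounded below by a positive constant. This uses strict positivity of $G_D$ and lower semicontinuity, from the standing assumption that $G_D>0$. Therefore
\[
(u^p,\varphi)\ge c\int_K u^p\ge c'\Bigl(\int_K u\,\psi\Bigr)^p
\]
by Jensen's inequality. Also $(\mu,\varphi)>0$, since $G_D>0$ makes $\varphi>0$ everywhere on $D$ and $\mu\neq0$. This yields a contradiction for large $k$, so $k_\mu<\infty$.

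We expect the main obstacle to be ensuring that $\varphi=R^{*,D}\psi$ is bounded below on $\operatorname{supp}\psi$ with no boundary regularity of $D$. We would handle this using the strong Feller property of $R^{*,D}$ (continuity) together with $G_D>0$, which give $\varphi>0$ continuous, hence bounded below on compacts.
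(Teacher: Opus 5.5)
Your proof is correct. The existence, minimality and interval parts are essentially the paper's argument (monotone iteration under a barrier). The only difference there is that you take a solution at some $k'>k$ as the barrier directly, whereas the paper first uses the small-$k$ solution from Theorem~\ref{exist-sol}.

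Where you genuinely differ is the finiteness of $k_\mu$:
\begin{itemize}
\item \emph{The paper's route.} It pairs the equation with the principal eigenfunction $\varphi_1$ of $-L^*$ in $H^s_0(D)$, uses $R^{*,D}\varphi_1=\lambda_1^{-1}\varphi_1$, and applies H\"older's inequality with respect to the measure $\varphi_1\,dx$. The same weight then appears on both sides, so no lower bound on $\varphi_1$ is ever needed. This gives $k_\mu\le\lambda_1^{p/(p-1)}\|\varphi_1\|_{L^1(D)}/(\varphi_1,\mu)$ together with the weighted bound \eqref{bound-uk}.
\item \emph{Your route.} You pair with $\varphi=R^{*,D}\psi$, $0\le\psi\in C_c(D)$. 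This costs you a positive lower bound for $\varphi$ on $\operatorname{supp}\psi$. That bound is available from the strong Feller property, or simply from lower semicontinuity of the co-excessive function $\varphi$, together with $G_D>0$.
\item \emph{What your route buys.} It needs only duality, boundedness of $R^{*,D}1$, and $G_D>0$. It requires no spectral theory for the non-symmetric operator $-L^*$. The paper uses a positive, bounded principal eigenfunction without comment; for non-symmetric $L$ this rests on a Krein--Rutman/de Pagter argument of the kind used in Proposition~\ref{prop.uncrit}.
\end{itemize}

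Your argument also yields the analogue of \eqref{bound-uk} that Remark~\ref{rem.critex} needs. From $X\ge c'X^p$ you get $X\le (c')^{-1/(p-1)}$, hence $(u_k^p,\varphi)\le X$ is bounded uniformly in $k$, and $\varphi>0$ on all of $D$.

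Two small remarks. Your first candidate $R^{*,D}1$ would not work with your H\"older step, since it has no positive lower bound near $\partial D$; the compactly supported $\psi$ is essential. Also, the additive constant $C$ in your target inequality is superfluous: the H\"older step gives $(u^p,\varphi)\ge c'X^p$ outright.
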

\begin{proof}
Define the sequence of functions as follows
\begin{equation}
\label{iter-leb}
v_0:=kR^D\mu,\quad v_{n+1}:=R^D[v_n^p]+kR^D\mu.
\end{equation}
Observe that $v_1>v_0$ and that for any $n\geq1$ the inequality
\[
v_n\geq v_{n-1}\quad\text{in}\ D
\]
implies that
\[
v_{n+1}=R^D[v_n^p]+kR^D\mu\geq R^D[v_{n-1}^p]+kR^D\mu=v_n.
\]
Thus the sequence $(v_n)$ is increasing in  $n$. 
We will now treat $w\in L^p(D)$ obtained as a solution in Theorem~\ref{exist-sol} with $k:=\varepsilon_0$ 
as a barrier function for small $k$. Note that $w\geq v_0$ and for all $n\geq0$ the inequality $v_n\leq w$ implies that
\[
v_{n+1}=R^D[v_n^p]+kR^D\mu\leq R^D[w^p]+kR^D\mu\leq w,\quad 0<k\leq \varepsilon_0.
\] 
Thus $w$  bounds the sequence $(v_n)$. Now let us define
\[
v(x):=\lim_{n\to\infty}v_n(x),\quad x\in D.
\]
Then $v\in L^p(D)$ and applying the monotone convergence theorem to  
\eqref{iter-leb} yields
\[
v=R^Dv^p+kR^D\mu.
\]
Thus \(v\) is a solution to \eqref{eq1.1}.
It remains to verify its minimality. 
Let \(u\) be any positive solution to \eqref{eq1.1}. 
The definition of the iteration gives \(v_0\leq u\). 
If \(v_n\leq u\), then positivity of the Green operator yields 
\[
v_{n+1} = R^D(v_n^p)+kR^D\mu \leq R^D(u^p)+kR^D\mu = u. 
\]
Induction therefore gives \(v_n\leq u\) for every \(n\geq0\), which, upon letting $n\to\infty$, yields
$v\leq u$.  Hence \(v\) is the minimal solution; from this point onward it will be denoted by \(u_k\). 
Any positive solution corresponding to a parameter \(k'>0\) serves as a barrier for the same iteration whenever 
\(0<k\leq k'\). Consequently, the set of parameters for which \eqref{eq1.1} admits a positive solution is an interval. 
Moreover, the map \(k\mapsto u_k\) is increasing, and therefore so is \(k\mapsto\|u_k\|_{L^p(D)}\).
Thus, let us  denote
\[
k_\mu:=\sup\{k\ge 0: \text{ there exists a positive solution to }\eqref{eq1.1}\}.
\]
It remains to establish that $k_\mu<\infty$. 
Let $(\lambda_1,\varphi_1)$ be the principal eigenpair of $-L^*$ in $H^s_0(D)$, then
\begin{equation}
\label{eq-eigen}
\begin{split}
\lambda_1^{-1}[k(\mu,\varphi_1)+(u_k^p,\varphi_1)]&=k(\mu,R^{*,D}\varphi_1)+(u_k^p,R^{*,D}\varphi_1)\\&
= k(R^D\mu,\varphi_1)+(R^Du_k^p,\varphi_1)=(u_k,\varphi_1).
\end{split}
\end{equation}
The H\"older inequality gives
\[
\int_D u_k\varphi_1\ dx\le \left(\int_D u_k^p\varphi_1\ dx\right)^{\frac{1}{p}}\left(\int_D \varphi_1\ dx\right)^{1-\frac{1}{p}},
\]
which together with \eqref{eq-eigen} implies
\[
\int_D u_k^p\varphi_1\ dx\leq \lambda_1 \left(\int_D u_k^p\varphi_1\ dx\right)^{\frac{1}{p}}\left(\int_D \varphi_1\ dx\right)^{1-\frac{1}{p}},
\]
which gives 
\begin{equation}\label{bound-uk}
\int_D u_k^p\varphi_1\ dx\leq \lambda_1^{\frac{p}{p-1}}\int_D \varphi_1\ dx.
\end{equation}
Combining \eqref{eq-eigen} with \eqref{bound-uk} we obtain
\[
k(\varphi_1,\mu)\leq \lambda_1 \left(\lambda_1^{\frac{p}{p-1}}\int_D \varphi_1\ dx\right)^{\frac{1}{p}}\left(\int_D \varphi_1\ dx\right)^{1-\frac{1}{p}}=\lambda_1^{\frac{p}{p-1}}\int_D \varphi_1\ dx,
\]
which  is equivalent to
\[
k\leq \frac{\lambda_1^{\frac{p}{p-1}}}{(\varphi_1,\mu)}\int_D \varphi_1\ dx
\]
and shows that $k_\mu<\infty$. 
\end{proof}

\begin{remark}
\label{rem.critex}
We can show that at $k_\mu$  we also obtain a solution. 
Indeed,  the mapping $k\mapsto u_k$ is increasing as  $k$ increases.
 Define $u_{k_\mu}:=\lim_{k\to (k_\mu)^-}u_k$. 
The uniform bound in \eqref{bound-uk} implies that $u_{k_\mu}$
 is finite almost everywhere and that 
\begin{equation}\label{uk-star-bound}
\int_D u_{k_\mu}^p\varphi_1\ dx<\infty.
\end{equation}
For every $k\in (0,k_\mu)$
\[
u_k=R^Du_k^p+kR^D\mu.
\]
By the monotone convergence theorem
\[
u_{k_\mu}=R^Du_{k_\mu}^p+k_\mu R^D\mu.
\]
By finiteness of $R^Du_{k_\mu}^p$ a.e. and strict positivity and lower semicontinuity of $G_D$
it follows that $u_{k_\mu}\in L^p_{loc}(D)$.
However, it is difficult to obtain that $u_{k_\mu}\in L^p(D)$. In Section \ref{sec9} we shall prove that 
this can be proved under the additional assumption that
 $L$ is symmetric.
\end{remark}

We conclude this section by proving that, at the threshold, problem
\eqref{eq1.1} admits at most one solution. This result also marks the end
of the part of the paper in which no symmetry assumption on $L$ is
required.

\begin{proposition}
\label{prop.uncrit}
There exists at most one positive solution to \eqref{eq1.1} with $k=k_\mu$.
\end{proposition}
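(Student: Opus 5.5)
Let $u$ and $w$ be two positive solutions of \eqref{eq1.1} with $k=k_\mu$. The plan is to show that $u$ coincides with the minimal object $u^*:=\lim_{k\uparrow k_\mu}u_k$ of Remark~\ref{rem.critex}. By symmetry the same will then hold for $w$, and hence $u=w$.

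\textbf{Step 1: $u^*\le u$.} Any solution at parameter $k_\mu$ is a barrier for the iteration \eqref{iter-leb} at every $k<k_\mu$. Exactly as in the proof of Theorem~\ref{k-star-sol}, this gives $u_k\le u$, and letting $k\uparrow k_\mu$ yields $u^*\le u$. In particular $u^*\in L^p(D)$, so by Remark~\ref{rem.critex} $u^*$ is itself a solution at $k_\mu$, and it is the minimal one. It therefore suffices to show that $u=u^*$ whenever $u\ge u^*$ are two solutions at $k_\mu$.

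\textbf{Step 2: argue by contradiction through convexity.} Suppose $u\ne u^*$. Set $\bar u:=\tfrac12(u+u^*)$. By strict convexity of $t\mapsto t^p$,
\[
\bar u^p\le\tfrac12(u^p+(u^*)^p),
\]
with strict inequality on the positive-measure set $\{u\ne u^*\}$. Consequently
\[
\bar u\ge R^D\bar u^p+k_\mu R^D\mu,
\]
with defect
\[
\rho:=\tfrac12 R^D\bigl(u^p+(u^*)^p-2\bar u^p\bigr)\ge0,\qquad \rho\not\equiv0.
\]
So $\bar u$ is a strict supersolution. Since $G_D>0$, $\rho$ is in fact strictly positive on $D$ and lower semicontinuous.

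The goal is to use this slack to produce a supersolution at some parameter $k_\mu+\varepsilon$. Once such a supersolution is available, the monotone iteration \eqref{iter-leb} with that supersolution as a barrier produces a solution at $k_\mu+\varepsilon$, contradicting the definition of $k_\mu$. This mirrors the argument in the proof of Theorem~\ref{k-star-sol}; note that a barrier $\bar w$ only needs to satisfy $\bar w\ge R^D\bar w^p+kR^D\mu$ and $\bar w\in L^p(D)$.

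\textbf{Step 3: the main obstacle.} We need $\rho\ge\varepsilon R^D\mu$, but pointwise comparison of $\rho$ with $R^D\mu$ is unavailable without Green function estimates. I would first try a rescaled candidate $w_\theta:=(1+\theta)\bar u$ and require
\[
(1+\theta)\bigl(R^D\bar u^p+k_\mu R^D\mu+\rho\bigr)\ge(1+\theta)^pR^D\bar u^p+(k_\mu+\varepsilon)R^D\mu .
\]
This reduces to controlling $((1+\theta)^p-(1+\theta))R^D\bar u^p$ and $\varepsilon R^D\mu$ by $\rho$, which again requires pointwise comparisons.

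As an alternative I would use the spectral/linearized argument. Let $z:=u-u^*\ge0$. Then
\[
z=R^D\bigl(u^p-(u^*)^p\bigr)\ge R^D\bigl(p(u^*)^{p-1}z\bigr),
\]
with strict inequality where $z\ne0$ by strict convexity. Thus $z$ is a positive strict supersolution of the linearized operator $-L-p(u^*)^{p-1}$, so its principal eigenvalue $\lambda_1(p(u^*)^{p-1})$ is positive. Conversely, the minimal branch $u_k$ is increasing in $k$ and has a limit at $k_\mu$, and I expect this to force $\lambda_1=0$ there. If $\lambda_1>0$ at $u^*$, an implicit-function/perturbation argument would continue the branch beyond $k_\mu$: solve for $u^*+\phi$ with $\phi$ small via a fixed point of
\[
\phi\mapsto\bigl(I-pR^D(u^*)^{p-1}\bigr)^{-1}\bigl[R^D(\text{higher-order terms})+\varepsilon R^D\mu\bigr],
\]
using compactness from Lemma~\ref{green-l1}. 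Setting this up in the nonsymmetric case via a Krein--Rutman argument for the compact positive operator $R^D(p(u^*)^{p-1}\,\cdot)$ is where I expect most of the work to lie.
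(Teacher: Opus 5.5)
Your second route is essentially the paper's proof. Step~1 identifies the minimal solution $u^*$; a second solution then gives $z=u-u^*\ge0$ with $z-\mathcal K(u^*)z=R^D\bigl(u^p-(u^*)^p-p(u^*)^{p-1}z\bigr)>0$ in $D$, where $\mathcal K(u^*)v:=pR^D\bigl((u^*)^{p-1}v\bigr)$. This forces $r(\mathcal K(u^*))<1$, and the implicit function theorem for $u\mapsto u-\mathcal T_k(u)$ on $L^{\hat p}(D)$ then produces solutions for $k>k_\mu$, which are automatically positive; the convexity attempt of Steps~2--3 is not needed. The step you leave open is settled in the paper by applying de Pagter's theorem and Krein--Rutman to the \emph{adjoint} $\mathcal K^*(u^*)$, which is compact and positivity improving because $G_D>0$ and $u^*>0$, and pairing the strict inequality above with its positive eigenfunction $\varphi_1$ to get $\langle z,\varphi_1\rangle>r(\mathcal K(u^*))\,\langle z,\varphi_1\rangle$.
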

\begin{proof}
Let \(\mathcal T_k\) denote the operator introduced in the proof of Theorem~\ref{exist-sol}, acting on \(L^{\hat p}(D)\), 
and define 
\[
\mathcal K(u)v:=pR^D\bigl(|u|^{p-2}uv\bigr). 
\]
The map \(\mathcal T_k\) is of class \(C^1\), and its Fr\'echet derivative 
at \(u\) is \(\mathcal K(u)\).
Assume, towards a contradiction, that \eqref{eq1.1} admits two positive solutions for \(k=k_\mu\). 
The proof of Theorem~\ref{k-star-sol} provides a minimal solution \(\bar u\). 
Thus, the mean-value representation gives 
\begin{equation}
\label{eq.uni1231}
u_1-\bar u=\mathcal T_{k_\mu}\left(u_1\right)-\mathcal T_{k_\mu}\left(\bar u\right)=\int_0^1 \mathcal K\left(\bar u+t\left(u_1-\bar u\right)\right) d t\left(u_1-\bar u\right)> \mathcal K\left(\bar u\right)\left(u_1-\bar u\right).
\end{equation}
Since both \(\bar u\) and \(G_D\) are strictly positive, the adjoint operator \(\mathcal K^*(\bar u)\) 
is positivity improving and hence ideal irreducible (recall that the closed ideals of $L^p(D)$ are, up to null sets,
of the form $L^p(A)$, where $A$ is a Borel subset of $D$). 
Corollary~\ref{green-young}, Lemma~\ref{lm.duoper}, and Deny's theorem further imply that \(\mathcal K^*(\bar u)\) is compact
(cf.  the reasoning in the proof of Lemma \ref{green-l1}).
By de Pagter's theorem \cite[Theorem~4.2.2]{MeyerNieberg1991}, its spectral radius $r(\mathcal K^*(\bar u))$ is strictly positive. 
The Krein--Rutman theorem \cite[Theorem~4.1.4]{MeyerNieberg1991} therefore provides a strictly positive eigenfunction 
\(\varphi_1\in L^{\hat p'}(D)\) associated with the eigenvalue $\lambda_1=r(\mathcal K^*(\bar u))$. 
Pairing \eqref{eq.uni1231} with \(\varphi_1\) gives 
\[
\langle u_1-\bar u,\varphi_1\rangle > \lambda_1\langle u_1-\bar u,\varphi_1\rangle, 
\]
and hence \(\lambda_1<1\). Define 
\[
\Phi(k,u):=u-\mathcal T_k(u).
\]
Then 
\[
D_u\Phi(k,u)=Id-\mathcal K(u). 
\]
Since the spectral radius of \(\mathcal K(\bar u)\) equals \(\lambda_1<1\), 
the operator \(D_u\Phi(k_\mu,\bar u)\) is an isomorphism. 
Moreover, \(\Phi(k_\mu,\bar u)=0\). 
Consequently, by the implicit function theorem,  there exists $\varepsilon>0$  such that for any $k\in (k_\mu-\varepsilon, k_\mu+\varepsilon)$
there exists a  positive $u_k\in L^{\hat p}(D)$ such that $\Phi(k,u_k)=0$.
This contradicts the definition of \(k_\mu\) and proves uniqueness at the critical parameter.
\end{proof}

\section{Stability and multiplicity of solutions}
\label{sec8}

In this section we assume that $L$ is symmetric, or equivalently, that $\nu(dx)=\nu(-dx)$ and $b\equiv 0$.
In particular, the form $\mathcal E$ is symmetric. We record the following elementary inequality
\begin{equation}
\label{eq.el1}
a^p+pa^{p-1}b\le (a+b)^p\le a^p+p(a+b)^{p-1}b,\quad a,b\ge 0,\, p>1,
\end{equation}
that we shall use frequently in the sequel.

\begin{definition}
We say that the solution $u$ to the equation \eqref{eq1.1} is \em{stable} if
\begin{equation}\label{ineq-stable}
\mathcal E(\xi,\xi)>p\int_D u^{p-1} \xi^2\,dx,\quad\xi\in H^s_0(D),\, \xi\neq 0.
\end{equation}
Moreover we say that the solution $u$ is \em{semi-stable} if the formula \eqref{ineq-stable} holds with non-strict inequality.
\end{definition}

\begin{lemma}
\label{hs-compact}
Let $q>\frac{d}{2s}$. For any function   $a\in L^q(D)$,  define the mapping $\Psi:H^s_0\mapsto\mathbb{R}$ by
\[
\Psi_a(\xi):=\int_D a\xi^2\,dx. 
\]
Then the following holds.
\begin{itemize}
 \item[(i)] if $a_n\to a\in L^q(D)$ and $\xi_n \rightharpoonup \xi$ in $H^s_0(D)$, then $\Psi_{a_n}(\xi_n)\to \Psi_a(\xi)$;
 \item[(ii)] if $(b_n)$ is bounded in $L^q(D)$ and $\xi_n \rightharpoonup \xi$ in $H^s_0(D)$, then $\Psi_{b_n}(\xi-\xi_n)\to 0$.
 \end{itemize}
\end{lemma}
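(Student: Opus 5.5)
The whole proof rests on one compactness fact: because $q>\frac{d}{2s}$, the conjugate exponent $q'$ satisfies $2q'<2^*_s:=\frac{2d}{d-2s}$. Indeed, $\frac1{q'}=1-\frac1q>1-\frac{2s}{d}=\frac{2}{2^*_s}$, so $2q'<2^*_s$. We then use the fractional Rellich--Kondrachov theorem for bounded $D$. Since $\mathcal E(\xi,\xi)$ is comparable with the Gagliardo seminorm by \eqref{eq.symbolcomp1} and \eqref{eq.fnot}, the embedding $H^s_0(D)\hookrightarrow L^r(D)$ is compact for every $r<2^*_s$. In particular it is compact for $r=2q'$. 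So if $\xi_n\rightharpoonup\xi$ in $H^s_0(D)$, then $\xi_n\to\xi$ strongly in $L^{2q'}(D)$. Consequently $\xi_n^2\to\xi^2$ in $L^{q'}(D)$ and $(\xi-\xi_n)^2\to0$ in $L^{q'}(D)$. For the first claim we use
\[
\|\xi_n^2-\xi^2\|_{q'}\le\|\xi_n-\xi\|_{2q'}\|\xi_n+\xi\|_{2q'},
\]
and the last factor stays bounded because weakly convergent sequences are bounded.

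For (ii), Hölder's inequality gives
\[
|\Psi_{b_n}(\xi-\xi_n)|\le \|b_n\|_{q}\,\|\xi-\xi_n\|_{2q'}^2 .
\]
The first factor is bounded by hypothesis and the second tends to zero by the compact embedding, so the product tends to zero.

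For (i), we split
\[
\Psi_{a_n}(\xi_n)-\Psi_a(\xi)=\int_D (a_n-a)\xi_n^2+\int_D a(\xi_n^2-\xi^2).
\]
The first term is at most $\|a_n-a\|_q\sup_n\|\xi_n\|_{2q'}^2$, which tends to zero because $\sup_n\|\xi_n\|_{2q'}$ is finite (by the bounded embedding and boundedness of weakly convergent sequences). The second term is at most $\|a\|_q\|\xi_n^2-\xi^2\|_{q'}$, which tends to zero by the first paragraph.

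The only step that needs care is justifying compactness of $H^s_0(D)\hookrightarrow L^r(D)$ for $r<2^*_s$ when $D$ is an arbitrary bounded open set with no boundary regularity. We handle this by extending functions by zero to $\mathbb R^d$ and applying the fractional Rellich theorem to functions supported in a fixed ball, for example via \cite{Leoni2023}. This gives compactness into $L^2$. Interpolating that with the bounded embedding into $L^{2^*_s}$, which was already used in the proof of Lemma~\ref{lm.duoper}, gives compactness into $L^r$ for every $r<2^*_s$. In dimension $d=1$ with $2s<d$ the same exponents apply, so no separate case is needed.
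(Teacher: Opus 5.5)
Your proposal is correct and follows essentially the same route as the paper. Both arguments use that $q>d/(2s)$ gives $2q'<2d_s$, then the compact embedding $H^s_0(D)\hookrightarrow L^{2q'}(D)$, then H\"older's inequality with the same splitting in (i) and the bound $\|b_n\|_{L^q}\|\xi-\xi_n\|^2_{L^{2q'}}$ in (ii). Your added justification of compactness for an arbitrary bounded $D$ (zero extension, Rellich into $L^2$, interpolation with the $L^{2d_s}$ bound) fills in a step the paper simply cites as the Sobolev embedding theorem.
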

\begin{proof}
From the Sobolev embedding theorem we know that the embedding
\begin{equation}\label{sob-emb-hs}
H^s_0(D)\hookrightarrow L^r(D)
\end{equation}
is compact for all $r\in [1,2d_s)$. Since $q>d_s'$, we have $2q'<2d_s$ so Sobolev embedding \eqref{sob-emb-hs} is compact with $r:=2q'$.
From the H\"older inequality
\[
\begin{split}
|\Psi_a(\xi_1)-\Psi_{\hat a}(\xi_2)|&\leq|\Psi_a(\xi_1)-\Psi_{\hat a}(\xi_1)|+|\Psi_{\hat a}(\xi_1)-\Psi_{\hat a}(\xi_2)|
\\&\le 
\|a-\hat a\|_{L^q(D)} \|\xi^2_1\|_{L^{q'}(D)}+ \|\hat a\|_{L^q(D)} \|\xi^2_1-\xi^2_2\|_{L^{q'}(D)}\\&
\le \|a-\hat a\|_{L^q(D)} \|\xi_1\|^2_{L^{2q'}(D)}\\&\quad+
\|\hat a\|_{L^q(D)}\|\xi_1-\xi_2\|_{L^{2q'}(D)}(\|\xi_1\|_{L^{2q'}(D)}+\|\xi_2\|_{L^{2q'}(D)}).
\end{split}
\]
Assertion (i) now follows from the above inequality and   the Sobolev embedding \eqref{sob-emb-hs}.
(ii) can be proved in a similar way. We have
\[
\begin{split}
|\Psi_{b_n}(\xi_1-\xi_2)|&\le \|b_n\|_{L^q(D)} \|(\xi_1-\xi_2)^2\|_{L^{q'}(D)}=
 \|b_n\|_{L^q(D)} \|\xi_1-\xi_2\|_{L^{2q'}(D)}^{2}.
\end{split}
\]
Now the Sobolev embedding \eqref{sob-emb-hs} completes the proof.
\end{proof}
\begin{theorem}
\label{stability}
Let  $\mu$ be a bounded positive Borel measure on $D$,  $k\in(0,k_\mu)$ and let $u_k$ be the minimal solution to \eqref{eq1.1}. Then $u_k$ is stable.
\end{theorem}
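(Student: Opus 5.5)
The plan is to reduce stability to a weighted principal eigenvalue problem and to use a solution with a larger parameter as a strict supersolution. Fix $k\in(0,k_\mu)$ and choose $k'\in(k,k_\mu)$. By Theorem~\ref{k-star-sol}, the minimal solution $u_{k'}$ exists, and by minimality and monotonicity $u_{k'}\ge u_k$. Set $a:=p\,u_k^{p-1}$.

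\textbf{Step 1: integrability of $a$.} I first check that $a\in L^q(D)$ for some $q>d/(2s)$. Since $u_k=R^D[u_k^p+k\mu]$ with $u_k^p+k\mu$ a bounded measure, Lemma~\ref{lm.duoper} gives $u_k\in M^{d_s}$. Hence $u_k^{p-1}\in M^{d_s/(p-1)}$. The condition $d_s/(p-1)>d/(2s)$ is equivalent to $p-1<2s/(d-2s)$, that is, to $p<d_s$. Since $D$ is bounded, we may therefore take any $q\in\bigl(d/(2s),\,d_s/(p-1)\bigr)$.

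\textbf{Step 2: the weighted eigenvalue problem.} Define
\[
\lambda:=\inf\Bigl\{\mathcal E(\xi,\xi):\ \xi\in H^s_0(D),\ \int_D a\xi^2=1\Bigr\}.
\]
I expect the existence of a minimizer, together with the regularity and positivity of the eigenfunction, to be the main technical point. Take a minimizing sequence; it is bounded in $H^s_0(D)$, so it has a weakly convergent subsequence. Lemma~\ref{hs-compact}(i) preserves the constraint in the limit, and weak lower semicontinuity of $\mathcal E$ shows the limit $\varphi$ is a minimizer. Because $\mathcal E$ is a symmetric Dirichlet form, $\mathcal E(|\varphi|,|\varphi|)\le\mathcal E(\varphi,\varphi)$, so we may take $\varphi\ge0$. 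The Euler--Lagrange equation then reads $\mathcal E(\varphi,\eta)=\lambda\int_D a\varphi\eta$ for all $\eta\in H^s_0(D)$, which means $\varphi=\lambda R^D[a\varphi]$. Two further properties follow.
\begin{itemize}
\item[(a)] Strict positivity: since $G_D>0$ and $a\varphi\not\equiv0$, we get $\varphi>0$ everywhere in $D$.
\item[(b)] Boundedness: $a\in L^q(D)$ with $q>d/(2s)$, so iterating Corollary~\ref{green-young} in H\"older form (bootstrapping $a\varphi\in L^{r}$ with increasing $r$) gives $\varphi\in L^\infty(D)$.
\end{itemize}

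\textbf{Step 3: $\lambda>1$.} Let $w:=u_{k'}-u_k\ge0$. The left inequality in \eqref{eq.el1} gives $u_{k'}^p-u_k^p\ge a w$, hence
\[
w=R^D[u_{k'}^p-u_k^p]+(k'-k)R^D\mu\ \ge\ R^D[aw]+(k'-k)R^D\mu .
\]
Pair this with $a\varphi$. Note that $(aw,\varphi)\le \|\varphi\|_\infty\,p\int_D u_{k'}^p<\infty$, since $a w\le p u_{k'}^{p-1}(u_{k'}-u_k)\le p u_{k'}^p$. Using symmetry of $G_D$ and $R^D[a\varphi]=\lambda^{-1}\varphi$, we obtain
\[
(aw,\varphi)\ \ge\ \lambda^{-1}(aw,\varphi)+(k'-k)\lambda^{-1}(\mu,\varphi).
\]
Since $\mu\neq0$ and $\varphi>0$, the last term is strictly positive, so $(1-\lambda^{-1})(aw,\varphi)>0$ and therefore $\lambda>1$.

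\textbf{Step 4: conclusion.} For $\xi\in H^s_0(D)$ with $\xi\neq0$ there are two cases. If $\int_D a\xi^2>0$, the definition of $\lambda$ and $\lambda>1$ give $\mathcal E(\xi,\xi)\ge\lambda\int_D a\xi^2>p\int_D u_k^{p-1}\xi^2$. If $\int_D a\xi^2=0$, then $\mathcal E(\xi,\xi)>0=p\int_D u_k^{p-1}\xi^2$, because $\mathcal E(\xi,\xi)\ge c\|\xi\|^2_{H^s}$ by \eqref{eq.symbolcomp1}. In both cases \eqref{ineq-stable} holds, so $u_k$ is stable.
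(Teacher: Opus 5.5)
Your proof is correct and is essentially the paper's argument. Both take the principal eigenfunction of the problem linearized with weight $p\,u_k^{p-1}$ and pair it, via the symmetry of $G_D$, with the strict supersolution $u_{k'}-u_k$ of the linearized equation; the paper merely argues by contradiction from $\lambda_1(k)\ge 1$ and performs the pairing through $\mathcal E(\xi_0,w_l)$ with truncated potentials $w_l$. Your extra step showing $\varphi\in L^\infty(D)$ adds rigor, since it guarantees the finiteness of $(aw,\varphi)$, which the paper's final chain of inequalities uses without comment.
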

\begin{proof}
Suppose  that $u_k$ is not stable. Set $B_1:=\{\xi\in H^s_0(D): \mathcal E(\xi,\xi)\le 1\}$.
Set
\begin{equation}\label{stability-lambda}
\lambda_1(k):=\sup\left\{p\int_D u_k^{p-1} \xi^2\,dx\ :\ \xi\in B_1 \right\}
=\sup\left\{\frac{p\int_D u_k^{p-1} \xi^2\,dx}{\mathcal E(\xi,\xi)}:\ \xi\in H_0^s(D),\, \xi\neq 0 \right\}.
\end{equation}
Since we supposed that $u_k$ is not stable, we have  $\lambda_1(k)\ge 1$.
Since $u_k\in L^p(D)$, we have $pu_k^{p-1}\in L^{p'}(D)$ and since $p<d_s$, we have 
$p'>d_s'=\frac{d}{2s}$. We can make use of Lemma~\ref{hs-compact} to guarantee that $\lambda_1(k)<\infty$ and that there  exists $\xi_0\in B_1$ such that
\[
p\int_D u_k^{p-1} \xi_0^2\,dx=\lambda_1(k)\ge 1.
\]
We may also assume that $\xi_0$ is nonnegative since $\mathcal{E}(|\xi|,|\xi|)\le\mathcal{E}(\xi,\xi)$ and the formula contains $\xi^2$. 
Consequently, $\xi_0\in H^s_0(D)$  is a global minimizer of the energy functional
\[
E(\xi):=\frac{1}{2}\mathcal{E}(\xi,\xi)-\frac{1}{2}\sigma\int_D pu_k^{p-1} \xi^2\,dx,\quad \xi\in H^s_0(D),
\] 
where $\lambda^{-1}_1(k)=:\sigma\in(0,1]$. Hence $\xi_0$ is a weak solution of the equation
\[
-L\xi_0=\sigma pu_k^{p-1}\xi_0.
\]
Choose $\hat{k}\in(k,k_\mu)$ and let $w_l:=R^D[(u^p_{\hat{k}}-u^p_k)\wedge l]$,
$w_\infty:=R^D[u^p_{\hat{k}}-u^p_k]$, and $v_\infty:= u_{\hat{k}}-u_k$. 
Since $G_D>0$ in $D\times D$, and 
\[
v_\infty=w_\infty+(\hat{k}-k)R^D[\mu],
\]
we conclude that $v_\infty>w_\infty>0$ in $D$.
From  elementary inequality \eqref{eq.el1}
we obtain
\begin{align*}
\sigma\int_D pu_k^{p-1}w_l\xi_0\,dx&=\mathcal{E}(\xi_0,w_l)=\int_D[(u^p_{\hat{k}}-u^p_k)\wedge l]\xi_0\,dx\ge \int_D \left([pu_k^{p-1}(u_{\hat{k}}-u_k)]\wedge l\right) \xi_0\,dx.
\end{align*}
Letting $l\to \infty$ yields
\begin{align*}
\sigma\int_D pu_k^{p-1}w_\infty\xi_0\,dx\ge \int_D pu_k^{p-1}v_\infty\xi_0\,dx>\int_D pu_k^{p-1}w_\infty\xi_0\,dx.
\end{align*}
This leads to a contradiction and completes the proof.
\end{proof}
\begin{corollary}\label{stability-stronger}
Observe that in the proof of Theorem~\ref{stability} we in fact proved a stronger result, i.e.
\begin{equation}\label{stability-strong}
\exists c>0 \quad \mathcal E(\xi,\xi)-p\int_D u^{p-1}_k \xi^2\,dx\ge c\mathcal E(\xi,\xi)\quad \forall\xi\in H^s_0(D).
\end{equation}
This follows from the fact that in the definition of $\lambda_1(k)$ in \eqref{stability-lambda} we obtained a contradiction assuming that $\lambda_1(k)\ge1$, so we have $\lambda_1(k)<1$ and we can take $c:=1-\lambda_1(k)$. We will use this stronger form below.
\end{corollary}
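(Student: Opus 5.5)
The plan is to show that the quantity $\lambda_1(k)$ defined in \eqref{stability-lambda} satisfies $\lambda_1(k)<1$, and then to read off \eqref{stability-strong} with $c:=1-\lambda_1(k)$ directly from the definition of $\lambda_1(k)$. In the proof of Theorem~\ref{stability}, the hypothesis ``$u_k$ is not stable'' was used only to obtain $\lambda_1(k)\ge 1$. I would therefore rerun that argument under the single assumption $\lambda_1(k)\ge1$. It reaches the same contradiction, so $\lambda_1(k)<1$ holds unconditionally.

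\emph{Step 1: the supremum is finite and attained.} I would verify this independently of any stability assumption. Since $u_k\in L^p(D)$, we have $a:=pu_k^{p-1}\in L^{p'}(D)$ with $p'>d_s'=d/(2s)$, so Lemma~\ref{hs-compact} applies with $q=p'$. Take a maximizing sequence $(\xi_n)\subset B_1$. The set $B_1$ is bounded in $H^s_0(D)$, so after passing to a subsequence $\xi_n\rightharpoonup\xi_0$, and weak lower semicontinuity of $\mathcal E$ gives $\xi_0\in B_1$. Lemma~\ref{hs-compact}(i), applied with $a_n\equiv a$, gives $\Psi_a(\xi_n)\to\Psi_a(\xi_0)$. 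Hence $\lambda_1(k)=\Psi_a(\xi_0)<\infty$ and the supremum is attained. Replacing $\xi_0$ by $|\xi_0|$, which is allowed because $\mathcal E(|\xi|,|\xi|)\le\mathcal E(\xi,\xi)$, we may take $\xi_0\ge0$. Moreover $\lambda_1(k)>0$, since $u_k\ge kR^D\mu>0$ by strict positivity of $G_D$.

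\emph{Step 2: $\lambda_1(k)\ge1$ is impossible.} Assuming $\lambda_1(k)\ge 1$ and setting $\sigma:=\lambda_1(k)^{-1}\in(0,1]$, the maximizer $\xi_0$ is a weak solution of $-L\xi_0=\sigma pu_k^{p-1}\xi_0$. We then repeat the comparison with $u_{\hat k}$ for some $\hat k\in(k,k_\mu)$, exactly as in the proof of Theorem~\ref{stability}. This combines the truncations $w_l$, the inequality \eqref{eq.el1}, and the strict inequality $v_\infty>w_\infty$, which follows from $(\hat k-k)R^D\mu>0$. It yields $\sigma\int pu_k^{p-1}w_\infty\xi_0>\int pu_k^{p-1}w_\infty\xi_0$, which contradicts $\sigma\le1$. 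Hence $\lambda_1(k)<1$.

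\emph{Step 3: conclusion.} By the second expression for $\lambda_1(k)$ in \eqref{stability-lambda}, $p\int_Du_k^{p-1}\xi^2\le\lambda_1(k)\,\mathcal E(\xi,\xi)$ for every $\xi\in H^s_0(D)$. This is exactly \eqref{stability-strong} with $c=1-\lambda_1(k)>0$.

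The only delicate point is Step~2: one must make sure the strict inequality in the contradiction really requires $\int u_k^{p-1}w_\infty\xi_0>0$. This in turn needs $\xi_0\not\equiv0$, which holds because $\Psi_a(\xi_0)=\lambda_1(k)>0$, together with $w_\infty>0$ on $D$. I would also check the passage $l\to\infty$ by monotone convergence, using $w_l\uparrow w_\infty$.
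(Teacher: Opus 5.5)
Your proposal is correct and is exactly the paper's argument: the contradiction in the proof of Theorem~\ref{stability} uses only $\lambda_1(k)\ge 1$, so $\lambda_1(k)<1$, and \eqref{stability-strong} then holds with $c=1-\lambda_1(k)$ by the Rayleigh-quotient form of \eqref{stability-lambda}. One refinement of your ``delicate point'': what makes $\sigma A>A$ with $\sigma\le 1$ contradictory is $A:=\int_D pu_k^{p-1}w_\infty\xi_0\,dx<\infty$, not $A>0$; this finiteness, which the paper also leaves implicit, follows from $w_\infty\le u_{\hat k}$ together with $\xi_0\in L^\infty(D)$, and the latter comes from applying the bootstrap of Lemma~\ref{lemma-regular} to $\xi_0=\sigma R^D[pu_k^{p-1}\xi_0]$.
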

\begin{remark}\label{l2-compact}
We define $L^2(D,a):=\{\xi: D\mapsto \mathbb{R}\ |\ \|\xi\|^2_{L^2(D,a)}:=\int_D\xi^2|a|<\infty\}$.
Observe that for $a\in L^q(D)$ with $q>\frac{d}{2s}$ we have from Lemma~\ref{hs-compact} that 
\[
H^s_0(D)\hookrightarrow L^2(D,a)\quad\text{is compact}.
\]
\end{remark}

\begin{lemma}[Bregman-divergence]
\label{lm.algebraic}
Let $p>1$, $c_p:=\min\{1,p-1\}$, and let
\begin{equation}
\label{eq.efff}
F(s,t):=\frac{1}{p+1}
\Big((s+t^+)^{p+1}-s^{p+1}-(p+1)s^p t^+\Big),
\qquad s,t\ge0.
\end{equation}
Then
\begin{enumerate}
\item[(1)] $F(s,t)\ge 0$, $F_t(s,t)=(s+t)^p-s^p$, and  $F_{tt}(s,t)=p(s+t)^{p-1}$, $s,t\ge 0$; 
\item[(2)]
for every $\varepsilon>0$, there exists $C_\varepsilon>0$ such that
\[
F(s,t)\leq
\frac{p+\varepsilon}{2}s^{p-1}t^2+C_\varepsilon t^{p+1},
\qquad s,t\ge0;
\]
\item[(3)]  for any $s,t\ge 0$,
\[
t^2 F_{t t}(s, t)-\left(1+c_p\right) t F_t(s, t)+c_p t^2 F_{t t}(s, 0)\ge 0;
\]
\item[(4)] for any $s,t\ge 0$,
\[
tF_t(s,t)-(2+c_p)F(s,t)
+
\frac{c_p}{2} t^2 F_{t t}(s, 0)\ge0.
\]
\end{enumerate}
\end{lemma}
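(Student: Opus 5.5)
The plan is to treat all four items as one–variable calculus in $t$, with $s\ge0$ fixed and $t\ge0$, so that $t^+=t$. The identities in (1) come from direct differentiation of \eqref{eq.efff}, and the rest rests on the Taylor representation
\[
F(s,t)=\int_0^t (t-\tau)\,p(s+\tau)^{p-1}\,d\tau .
\]

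\emph{Item (1).} Differentiating \eqref{eq.efff} gives $F_t(s,t)=(s+t)^p-s^p$ and $F_{tt}(s,t)=p(s+t)^{p-1}$. Since $F(s,0)=F_t(s,0)=0$ and $F_{tt}\ge0$, $F$ is convex in $t$ with a double zero at $t=0$, hence $F\ge0$.

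\emph{Item (2).} Fix $\varepsilon>0$ and choose $\delta>0$ with $p(1+\delta)^{p-1}\le p+\varepsilon$. I split into two cases.
\begin{itemize}
\item If $t\le\delta s$, then $(s+\tau)^{p-1}\le(1+\delta)^{p-1}s^{p-1}$ for $\tau\in[0,t]$. The Taylor formula then gives $F\le \frac{p+\varepsilon}{2}s^{p-1}t^2$.
\item If $t>\delta s$, then $F\le (s+t)^{p+1}/(p+1)\le (1+\delta^{-1})^{p+1}t^{p+1}/(p+1)$, which defines $C_\varepsilon$.
\end{itemize}

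\emph{Item (3).} First dispose of $s=0$. For $p<2$ we have $F_{tt}(0,0)=p\cdot 0^{p-1}=0$, and for $p\ge 2$ the term $F_{tt}(0,0)=p\,0^{p-1}$ vanishes as well (with value $p$ only in the degenerate case $p=1$, excluded). The expression then equals $(p-1-c_p)t^{p+1}\ge0$, because $c_p\le p-1$.

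For $s>0$, homogeneity allows $s=1$ and $t=x$. After dividing by $x$, I must show that
\[
h(x):=p x(1+x)^{p-1}-(1+c_p)\bigl((1+x)^p-1\bigr)+c_p p x\ \ge 0 .
\]
Since $h(0)=0$, it suffices to show $h'\ge0$. A short computation gives
\[
h'(x)=p\Bigl[(p-1-c_p)\,x(1+x)^{p-2}+c_p\bigl(1-(1+x)^{p-2}\bigr)\Bigr].
\]
\begin{itemize}
\item If $p<2$, then $c_p=p-1$ and $h'=p c_p\bigl(1-(1+x)^{p-2}\bigr)\ge0$.
\item If $p\ge2$, then $c_p=1$ and $h'/p=k(x):=(p-2)x(1+x)^{p-2}+1-(1+x)^{p-2}$. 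Here $k(0)=0$ and
\[
k'(x)=(p-2)(1+x)^{p-3}\bigl[(p-2)x+x\bigr]\ge0 .
\]
\end{itemize}
This sign analysis of $h'$ is the only genuinely delicate step, so I would double-check the two regimes $p<2$ and $p\ge2$ carefully.

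\emph{Item (4).} This reduces to (3) by integration. Set
\[
\Phi(t):=tF_t-(2+c_p)F+\tfrac{c_p}{2}t^2F_{tt}(s,0).
\]
Then $\Phi(0)=0$, and for $t>0$
\[
\Phi'(t)=tF_{tt}-(1+c_p)F_t+c_p tF_{tt}(s,0)=\frac1t\Bigl[t^2F_{tt}-(1+c_p)tF_t+c_pt^2F_{tt}(s,0)\Bigr]\ge0
\]
by (3). Hence $\Phi\ge0$.
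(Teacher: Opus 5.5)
Your proof is correct and essentially follows the paper. Items (1) and (4) are argued the same way. In (3), the paper's auxiliary function $g$ (obtained via $a=s$, $b=s+t$, $f(r)=r^p$, i.e.\ the Bregman-divergence comparison) is, up to the normalization $s=1$, exactly your $h$, and it is shown nondecreasing by the same derivative computation.

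The only differences are cosmetic. For $p\ge 2$ the paper uses monotonicity of $f''$, i.e.\ $f'(b)-f'(a)\le (b-a)f''(b)$, instead of your second-derivative check on $k$. For (2) it integrates the pointwise bound $(s+r)^{p-1}\le(1+\varepsilon/p)s^{p-1}+C_\varepsilon r^{p-1}$ in the Taylor formula, rather than splitting into the cases $t\le\delta s$ and $t>\delta s$.
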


\begin{proof}
For (1) note that
\[
F(s,t)=p\int_0^t(t-r)(s+r)^{p-1}\,dr .
\]
From this, and by using, for every $\varepsilon>0$, elementary inequality 
\[
(s+r)^{p-1}
\leq
\left(1+\frac{\varepsilon}{p}\right)s^{p-1}
+C_\varepsilon r^{p-1},
\qquad s,r\ge0,
\]
we obtain (2).
As for (3) observe that the desired inequality is equivalent to 
$$
t^2 F_{t t}(s, t)-tF_t(s, t) \geq c_p\left(tF_t(s, t)-t^2 F_{t t}(s, 0)\right),
$$
and further equivalent to 
\[
f(s)-f(t)+f'(t)(t-s)
\geq
c_p\left(f(t)-f(s)-f'(s)(t-s)\right),\quad 0\le s\le t,
\]
where $f(t)=t^p$.
By using integral representation of convex functions the last inequality  in turn is equivalent to
\[
\int_s^t (r-s)f''(r)\,dr
\geq
c_p \int_s^t (t-r)f''(r)\,dr.
\]
It is an elementary calculation that the function
\[
g(t):=
\int_s^t (r-s)f''(r)\,dr-
c_p\int_s^t (t-r)f''(r)\,dr,\quad t\ge s
\]
satisfies $g(s)=0$, $g'(t)\ge 0,\, t\ge s$, which completes the proof of (3).
For (4) put
\[
\Phi(s,t):=
tF_t(s,t)-(2+c_p)F(s,t)
+\frac{c_pp}{2}s^{p-1}t^2 .
\]
Then $\Phi(s,0)=0$, and $t\Phi_t(s,t)$ equals the left-hand side of inequality in (3).
Hence $t\Phi_t(s,t)\ge0$, and as a result  $\Phi_t(s,t)\ge0$ for
$t>0$. Therefore, since $\Phi(s,0)=0$, we obtain $\Phi(s,t)\ge0$.
\end{proof}

\begin{theorem}
\label{mountain-pass-sol} 
 Let  $\mu$ be a bounded positive Borel measure on $D$ and let $k\in(0,k_\mu)$. 
 Then there exists a second  solution $v_k$ to \eqref{eq1.1} such that 
 $v_k>u_k$, where $u_k$ is the minimal solution.
\end{theorem}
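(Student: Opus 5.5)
We look for the second solution in the form $v_k=u_k+w$ with $w\in H^s_0(D)$, $w\ge0$, $w\not\equiv0$. Subtracting the equation for $u_k$, we see that $w$ must solve
\[
-Lw=(u_k+w^+)^p-u_k^p=F_t(u_k,w)\quad\text{in }D,\qquad w=0\ \text{in }D^c.
\]
Here $F$ is the Bregman-type function \eqref{eq.efff} of Lemma~\ref{lm.algebraic}. This is the Euler--Lagrange equation of the functional
\[
J(w):=\tfrac12\mathcal E(w,w)-\int_D F(u_k,w)\,dx,\qquad w\in H^s_0(D).
\]
The symmetry of $L$ is used at this point, since $\mathcal E$ must be a symmetric form for $J$ to be a potential. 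The plan is to obtain a nontrivial critical point of $J$ by the mountain pass theorem, and then to check that it gives an integral solution in the sense of the paper.

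\textbf{Mountain pass geometry.} By Lemma~\ref{lm.algebraic}(2),
\[
J(w)\ge \tfrac12\mathcal E(w,w)-\tfrac{p+\varepsilon}{2}\int_D u_k^{p-1}w^2-C_\varepsilon\int_D|w|^{p+1}.
\]
Corollary~\ref{stability-stronger} bounds the first two terms below by $\tfrac{c}{2}\mathcal E(w,w)-\tfrac{\varepsilon}{2}\int_D u_k^{p-1}w^2$. Since $u_k^{p-1}\in L^{p'}$ with $p'>d/(2s)$, Lemma~\ref{hs-compact} gives $\int_D u_k^{p-1}w^2\le C\mathcal E(w,w)$, so for small $\varepsilon$ this is at least $c'\mathcal E(w,w)$.

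The last term needs a Sobolev embedding, and here lies a subtlety: we need $p+1\le 2d_s$, i.e. $p\le (d+2s)/(d-2s)$. This holds because $p<d_s<(d+2s)/(d-2s)$. Hence $J(w)\ge c''\|w\|^2-C\|w\|^{p+1}\ge\alpha>0$ on a small sphere. For the mountain itself, fix $\phi\ge0$, $\phi\ne0$. Then $F(u_k,t\phi)\ge \frac{t^{p+1}}{p+1}\phi^{p+1}$ up to lower order terms, so $J(t\phi)\to-\infty$ as $t\to\infty$.

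\textbf{Palais--Smale condition; the main obstacle.} Let $(w_n)$ satisfy $J(w_n)\to c$ and $J'(w_n)\to0$. For boundedness we use Lemma~\ref{lm.algebraic}(4), which gives
\[
(2+c_p)J(w_n)-\langle J'(w_n),w_n\rangle\ge \tfrac{c_p}{2}\Big(\mathcal E(w_n,w_n)-p\int_D u_k^{p-1}w_n^2\Big)\ge \tfrac{c_pc}{2}\mathcal E(w_n,w_n).
\]
The second inequality is stability (Corollary~\ref{stability-stronger}); this is exactly why the stronger form of stability was recorded. It follows that $(w_n)$ is bounded in $H^s_0(D)$.

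For strong convergence we pass to a weak limit $w$ and treat the nonlinear terms separately.
\begin{itemize}
\item The terms involving $u_k^{p-1}w_n$ are handled by Lemma~\ref{hs-compact}.
\item The pure power terms $|w_n|^{p}$ are handled by the compact embedding $H^s_0\hookrightarrow L^{p+1}$, valid since $p+1<2d_s$.
\item The mixed terms $(u_k+w_n)^p-u_k^p$ are bounded by $C(u_k^{p-1}w_n+w_n^p)$, so they reduce to the two previous cases.
\end{itemize}
This yields $\mathcal E(w_n-w,w_n-w)\to0$. I expect verifying these integrability and compactness details to be the main technical step.

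\textbf{Positivity and identification as a solution.} The mountain pass theorem gives a critical point $w\ne0$ with $J(w)\ge\alpha>0$. Testing the equation with $w^-$, and using that $F_t(u_k,w)$ vanishes where $w\le0$ together with $\mathcal E(w^+,w^-)\le0$, gives $\mathcal E(w^-,w^-)\le0$, hence $w\ge0$.

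Thus $w\in H^s_0(D)$ is a weak, hence distributional, solution of $-Lw=(u_k+w)^p-u_k^p$. By Corollary~\ref{col.12}(1), condition \eqref{zero-boundary-cond} holds for $w$. Provided $(u_k+w)^p\in L^1(D)$, which follows from $u_k\in L^p$ and $w\in L^{2d_s}\subset L^p$, Lemma~\ref{equiv-dirichlet} gives $w=R^D[(u_k+w)^p-u_k^p]$. Adding the integral equation for $u_k$ shows that $v_k:=u_k+w$ satisfies \eqref{eq.for.int.int} and $v_k\in L^p(D)$.

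Finally, strict inequality holds: $w=R^D[\cdot]$ with a nonnegative, nonzero density, and $G_D>0$, so $w>0$ in $D$. Therefore $v_k>u_k$.
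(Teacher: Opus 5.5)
Your proposal is correct and follows essentially the same route as the paper. Both reduce to $w=v_k-u_k$, use the functional built from $F$ in Lemma~\ref{lm.algebraic}, obtain the mountain-pass geometry from Lemma~\ref{lm.algebraic}(2) with the coercive estimate of Corollary~\ref{stability-stronger}, and get Palais--Smale boundedness from Lemma~\ref{lm.algebraic}(4) together with compact embeddings. Your closing steps fill in details the paper leaves implicit in its opening sentence: the sign $w\ge0$ via testing with $w^-$, identification as an integral solution via Corollary~\ref{col.12}(1) and Lemma~\ref{equiv-dirichlet}, and $v_k>u_k$ from $G_D>0$.
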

\begin{proof}
First observe that finding the   solution $v_k$ is equivalent to finding a non-trivial weak solution to the following problem
\begin{equation}\label{eq-mountain}
\begin{cases}
-Lw=(u_k+w_+)^p-u_k^p,\quad &\text{in}\quad D, \\
w=0,\quad &\text{in}\quad\mathbb{R}^d\setminus D.
\end{cases}
\end{equation}
By a weak solution to \eqref{eq-mountain} we understand a function  $w\in H_0^s(D)$
such that 
\[
\mathcal E(w,\eta)=\int_D((u_k+w_+)^p-u_k^p)\eta,\quad \eta\in H^s_0(D).
\]
Consider the following energy functional
\[
E(v):=\frac{1}{2}\mathcal{E}(v,v)-\int_D F(u_k,v)\,dx,\quad v\in H^s_0(D),
\]
where $F$ is given by \eqref{eq.efff}. 
Then a critical point of $E$ is a weak solution of \eqref{eq-mountain}. 
By Lemma~\ref{lm.algebraic}(2) for any $v\in H^s_0(D)$ we have
\begin{equation}\label{appr-fst}
\int_D F(u_k,v)\,dx\le \frac{p+\varepsilon}{2}\int_D u_k^{p-1}(v^+)^2\,dx+c_\varepsilon\int_D (v^+)^{p+1}\,dx.
\end{equation}
Since $p+1<2d_s$, the Sobolev theorem gives that the inclusion
\begin{equation}\label{sob-emb-comp}
H^s_0(D)\hookrightarrow L^{p+1}(D)\quad\text{is compact.}
\end{equation}
In particular, combining this with Lemma~\ref{hs-compact} gives that  the energy functional $E$ is well defined on 
$H^s_0(D)$. It is immediate that $E(0)=0$. Moreover, using \eqref{stability-strong}, \eqref{appr-fst} 
and \eqref{sob-emb-comp} we obtain for $v\in H^s_0(D)$ with $\mathcal E(v,v)=1$ and $\varepsilon>0$ sufficiently small that
\begin{align*}
E(tv)&=\frac{1}{2}t^2\mathcal E(v,v)-\int_D F(u_k,tv)\,dx\\
&\geq t^2\left(\frac{1}{2}\mathcal E(v,v)-\frac{p+\varepsilon}{2}\int_D u_k^{p-1}v_+^2\right)-c_\varepsilon t^{p+1}\int_D v_+^{p+1}\,dx\\
&\geq C_1t^2\mathcal E(v,v)-C_2t^{p+1}\mathcal E(v,v)^{(p+1)/2}=C_1t^2-C_2t^{p+1},
\end{align*}
where $C_1,C_2>0$. Let $\sigma>0$ be sufficiently small such that $\beta:=C_1\sigma^2-C_2\sigma^{p+1}>0$. Then
\[
E(v)\ge \beta>0,\quad \text{for }\sqrt{\mathcal E(v,v)}=\sigma.
\]
We next show that there exists $e\in H^s_0(D)$ with $\sqrt{\mathcal E(e,e)}>\sigma$ such that $E(e)\le 0$. 
Let $v_0\in H^s_0(D)$ be nonnegative and satisfy $\mathcal E(v_0,v_0)=1$. Then
\[
F(u_k,v_0)\ge \frac{1}{p+1}v_0^{p+1},
\]
which implies that for $t\ge t_0$,
\[
\begin{split}
E(tv_0)&=\frac{t^2}{2}\mathcal E(v_0,v_0)-\int_D F(u_k,v_0)\,dx\\&
\le \frac{t^2}{2}\mathcal E(v_0,v_0)-\frac{1}{p+1}t^{p+1}\int_D v_0^{p+1}\ dx= \frac{1}{2}t^2-C_3t^{p+1}\le 0,
\end{split}
\]
where $t_0$ is large enough and $C_3:=(p+1)^{-1}\int_D v_0^{p+1}\ dx>0$. Thus, taking $e:=\max\{t_0,\sigma+1\}v_0$, we obtain $\sqrt{\mathcal E(e,e)}>\sigma$ and $E(e)\le0$.
We now show that the functional $E$ satisfies the Palais–Smale compactness condition, i.e. for any sequence $(v_n)\subset H^s_0(D)$ 
satisfying $E(v_n)\to c$ and $E'(v_n)\to0$, 
there exists a convergent subsequence. 
From the fact that $E(v_n)\to c$ we obtain
\[
\frac{1}{2}\mathcal E(v_n,v_n)-\int_D F(u_k,v_n)\,dx=E(v_n)\le C_4,
\]
which, after multiplying both sides by $(2+c_p)$ gives us
\begin{equation}\label{mountain-ineq-functional}
\frac{1}{2}(2+c_p)\mathcal E(v_n,v_n)-(2+c_p)\int_D F(u_k,v_n)\,dx\le C_4(2+c_p).
\end{equation}
From the fact that $E'(v_n)\to 0$ we obtain
\[
\mathcal{E}(v_n,w)-\int_D \left((u_k+v_n^+)^p-u_k^p\right)w\ dx=\langle E'(v_n), w\rangle \le C_5\sqrt{\mathcal E(w,w)},
\]
which, after inserting $-v_n$ in place of $w$ gives us
\begin{equation}\label{mountain-ineq-derivative}
-\mathcal E(v_n,v_n)+\int_D \left((u_k+v_n^+)^p-u_k^p\right)(v_n)_+\ dx\le C_5\sqrt{\mathcal E(v_n,v_n)}.
\end{equation}
Adding up the inequalities \eqref{mountain-ineq-functional} and \eqref{mountain-ineq-derivative} we obtain
\[
\frac{c_p}{2}\mathcal E(v_n,v_n)+\int_D \left[\left((u_k+v_n^+)^p-u_k^p\right)v_n^+-(2+c_p)F(u_k,v_n)\right]dx\le C(2+c_p+\sqrt{\mathcal E(v_n,v_n)}),
\]
with $C:=C_4\vee C_5$.
Applying Lemma~\ref{lm.algebraic}(4) with $s:=u_k$ and $t:=v_n^+$ 
to the integrand of the above formula we obtain
\[
\frac{c_p}{2}\left(\mathcal E(v_n,v_n)-p\int_D u_k^{p-1}v_n^2\ dx\right)\le C(2+c_p+\sqrt{\mathcal E(v_n,v_n)}).
\]
The coercivity estimate \eqref{stability-strong} therefore shows that \((v_n)\) 
is bounded in \(H_0^s(D)\). 
Passing to a subsequence, still denoted by \((v_n)\), there exists \(v\in H_0^s(D)\) such that
\begin{equation}
\label{eq.conv1}
\begin{split}
v_n&\rightharpoonup v\quad\text{in } H^s_0(D), \\
v_n&\to v\quad\text{in }L^{p+1}(D)\ \text{and in } L^2(D,u_k^{p-1}),\\
v_n&\to v\quad\text{almost everywhere}.
\end{split}
\end{equation}
Here we have used the compact embedding \eqref{sob-emb-comp} and Remark~\ref{l2-compact}.
To finish the proof we need to show that $\mathcal E(v_n,v_n)\to\mathcal E(v,v)$. 
Since $v_n\rightharpoonup v$ (hence \((v_n-v)\) is bounded in \(H_0^s(D)\)) and $E'(v_n)\to0$ we obtain
\begin{equation}
\label{eq.conv2}
\mathcal{E}(v_n,v_n-v)-\int_D \left[(u_k+(v_n)_+)^p-u_k^p\right](v_n-v)\,dx=\langle E'(v_n),v_n-v\rangle\to0.
\end{equation}
By H\"older's inequality, we obtain
\begin{align*}
\left|\int_D \left[(u_k+(v_n)_+)^p-u_k^p\right](v_n-v)\,dx\right|&\le C\int_D \left[u_k^{p-1}|v_n|+|v_n|^p\right]|v_n-v|\,dx\\
&\le C\left(\int_D|v_n-v|^2u_k^{p-1}\ dx\right)^{\frac{1}{2}}\left(\int_D|v_n|^2u_k^{p-1}\ dx\right)^{\frac{1}{2}}\\
&+C\left(\int_D|v_n|^{p+1}\ dx\right)^{\frac{p}{p+1}}\left(\int_D|v_n-v|^{p+1}\ dx\right)^{\frac{1}{p+1}}.
\end{align*}
By \eqref{eq.conv1}, the right-hand side of the above inequality tends to zero as $n\to\infty$,
which, in turn, combined with \eqref{eq.conv2} yields 
\[
\mathcal E(v_n,v_n-v)\longrightarrow0. 
\]
Together with the weak convergence of \(v_n\), this implies 
\[
\mathcal E(v_n-v,v_n-v)\longrightarrow0. 
\]
Thus \(v_n\to v\) strongly in \(H_0^s(D)\), and \(E\) satisfies the Palais--Smale condition.
The mountain-pass theorem therefore provides a non-trivial solution \(w\in H_0^s(D)\) to \eqref{eq-mountain}.
\end{proof}

\section{Existence  of the Critical  Solution}\label{critical-parameter}
\label{sec9}

In what follows, we assume that $L$ is symmetric and fix a bounded positive Borel measure $\mu$ on $D$.

The aim of this section is to prove that problem \eqref{eq1.1} admits a   solution at the critical parameter $k=k_\mu$.
To this end, we combine some ideas from \cite{lit:FerreroSaccon2006} with the results obtained in the previous section to derive a uniform $L^p$-bound
for the minimal solutions $u_k$ of \eqref{eq1.1}, independent of $k\in(0,k_\mu)$. Together with Remark \ref{rem.critex}, 
this estimate yields the existence of a solution to \eqref{eq1.1} at the critical parameter $k=k_\mu$.

We consider the iterative sequence   already used in the 
proof of Theorem~\ref{k-star-sol} but we change the notation:
\begin{equation}
\label{def-gamma}
  \gamma_{0,k}:=kR^{D}[\mu],\qquad
  \gamma_{n+1,k}:=R^{D}\!\bigl[\gamma_{n,k}^{p}\bigr]+kR^{D}[\mu],
  \quad n\geq0.
\end{equation}
The sequence is pointwise non-decreasing in $D$ and converges as $n\to\infty$ to the minimal
solution $u_{k}$ of \eqref{eq1.1}.  In particular $0\leq\gamma_{n,k}
\leq u_{k},\, n\ge 1$. For  $n\geq1$  we
introduce the \emph{shifted nonlinearity}
\begin{equation}\label{shift-nonl}
  h_{n,k}(x,s):=
  \begin{cases}
    \bigl(s+\gamma_{n,k}(x)\bigr)^{p}-\gamma_{n,k}(x)^{p}, & s\geq0,\\
    -h_{n,k}(x,-s), & s<0,
  \end{cases}
\end{equation}
and the \emph{remainder term}
\begin{equation}
\label{rem-term}
  f_{n,k}(x):=\gamma_{n,k}(x)^{p}-\gamma_{n-1,k}(x)^{p}.
\end{equation}
Recall the elementary fact that the function 
\[
 (a,b)\mapsto (a+b)^{p}-a^{p},\qquad a,b\geq 0,
\]
is nondecreasing in both variables. Thus
\begin{equation}
\label{eq.mon1}
h_{n,k}(x,t)\le h_{n,k'}(x,t),\,\,\, f_{n,k}(x)\le f_{n,k'}(x)\quad \text{for any }0\le k\le k',\, t\ge 0 \text { a.e. }x\in D.
\end{equation}
For the second inequality, it is enough to observe that 
\[
f_{n,k}=\gamma_{n,k}^{p}-\gamma_{n-1,k}^{p}=(\gamma_{n-1,k}+\eta_{n,k})^{p}-\gamma_{n-1,k}^{p},
\]
where 
\[
\eta_{n,k}:=\gamma_{n,k}-\gamma_{n-1,k},\qquad n\geq 1,
\]
and that $\eta_{n,k}$ is nondecreasing in $k$.
To prove the latter claim we proceed by induction on  $n\ge 1$. For $n=1$, we have $\eta_{1,k}=k^{p}R^{D}[(R^{D}[\mu])^{p}]$, so the claim holds true.
Suppose now that $\eta_{n,k}$ is nondecreasing in $k$.  Then $f_{n,k}$ is
nondecreasing in $k$. Therefore, by the positivity of $R^{D}$,
\[
\eta_{n+1,k}=\gamma_{n+1,k}-\gamma_{n,k}=R^{D}[f_{n,k}]
\]
is also nondecreasing in $k$. This completes the induction.

We associate with \eqref{eq-shifted} the functional
$I_{n,k}\colon H_{0}^{s}(D)\to\mathbb{R}$,
\begin{equation}
\label{func-shifted}
  I_{n,k}(w):=\frac{1}{2}\,\mathcal{E}(w,w)-\int_{D}H_{n,k}(x,w)\,dx
              -\int_{D}f_{n,k}\,w\,dx,
\end{equation}
where $H_{n,k}(x,s):=\int_{0}^{s}h_{n,k}(x,t)\,dt$.  
 
\begin{lemma}\label{lemma-regular}
Let $f_{n,k}$ be the function defined in \eqref{rem-term}.
Then there exists $N\in\mathbb{N}$ such that $f_{N,k}\in L^{q}(D)$ for some $q>d_s'$ and every  $k\ge 0$.
\end{lemma}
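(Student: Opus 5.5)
The idea is to bootstrap integrability along the iteration \eqref{def-gamma}, using Corollary~\ref{green-young} to gain integrability at each step. The point is that the singular part $kR^D\mu$ cancels in the differences $\eta_{n,k}=\gamma_{n,k}-\gamma_{n-1,k}=R^D[f_{n-1,k}]$. First we record the base integrability. By Lemma~\ref{lm.duoper}, $R^D\mu\in M^{d_s}(D)$, and since $D$ is bounded, $R^D\mu\in L^r(D)$ for every $r<d_s$. Next we run an induction on $\gamma_{n,k}$ itself. Suppose $\gamma_{n,k}\in L^{r_n}(D)$ with $r_n\ge p$. Then $\gamma_{n,k}^p\in L^{r_n/p}(D)$, and Corollary~\ref{green-young}(2) gives $R^D[\gamma^p_{n,k}]\in L^{t}$ with $1/t=p/r_n-1/d_s'$, or $L^\infty$ if $r_n/p>d_s'$. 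However $\gamma_{n+1,k}$ always contains $kR^D\mu$, which lies only in $L^r$ for $r<d_s$, so the $\gamma$'s themselves never improve beyond $M^{d_s}$. Hence the gain has to be extracted in $f_{n,k}$, not in $\gamma_{n,k}$.

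The key step is the pointwise bound from \eqref{eq.el1}:
\[
0\le f_{n,k}=(\gamma_{n-1,k}+\eta_{n,k})^p-\gamma_{n-1,k}^p\le p\,\gamma_{n,k}^{p-1}\eta_{n,k}.
\]
Here $\gamma_{n,k}^{p-1}\le u_k^{p-1}$ is not usable uniformly in $k$, but $\gamma_{n,k}\le \gamma_{n,k'}$ for fixed $n$, and each $\gamma_{n,k}$ lies in $M^{d_s}$ with norm controlled by $k$ and $n$. So $\gamma_{n,k}^{p-1}\in L^{a}$ for all $a<d_s/(p-1)$. Then we iterate on $\eta$. We have $\eta_{1,k}=k^pR^D[(R^D\mu)^p]$ with $(R^D\mu)^p\in L^{b}$ for $b<d_s/p$ (note $d_s/p>1$), so $\eta_{1,k}\in L^{t_1}$ with $1/t_1=p/d_s-1/d_s'+\epsilon$, which improves on $1/d_s$ by the fixed amount $1/d_s'-(p-1)/d_s>0$. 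This quantity is positive exactly because $p<d_s$. By Hölder, $f_{n,k}\le p\gamma_{n,k}^{p-1}\eta_{n,k}$ lies in $L^{c}$ with $1/c=(p-1)/d_s+1/t_n+\epsilon$. Corollary~\ref{green-young}(2) then yields $\eta_{n+1,k}=R^D f_{n,k}\in L^{t_{n+1}}$ with
\[
\frac1{t_{n+1}}=\frac1{t_n}-\Bigl(\frac1{d_s'}-\frac{p-1}{d_s}\Bigr)+2\epsilon.
\]
Each step therefore decreases $1/t_n$ by a fixed positive amount. After finitely many steps $N-1$, the exponent $1/c$ for $f_{N,k}$ drops below $1/d_s'$, i.e.\ $f_{N,k}\in L^q$ with $q>d_s'$. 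Once $1/t_n$ would become negative, we use part (1) of Corollary~\ref{green-young} to get $\eta\in L^\infty$ instead, and then $f$ inherits the integrability $(p-1)/d_s<1/d_s'$, which already suffices.

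The number of steps $N$ depends only on $p,d,s$ (and the choice of $\epsilon$), not on $k$, which is what the statement requires: the same $N$ works for every $k\ge0$, while the norms may depend on $k$. The main technical obstacle is bookkeeping the exponents so that every application of Corollary~\ref{green-young}(2) is in its admissible range $1<q<d_s'$, and handling the endpoint cases by slightly lowering exponents on the bounded domain. One must also check that the Hölder product of $\gamma^{p-1}_{n,k}$ and $\eta_{n,k}$ stays above $L^1$ at every stage, which holds because $(p-1)/d_s+1/d_s<1$ when $p<d_s$.
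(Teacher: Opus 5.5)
Your argument is correct and is essentially the paper's proof. You use the same pointwise bound $f_{n,k}\le p\gamma_{n,k}^{p-1}(\gamma_{n,k}-\gamma_{n-1,k})$ from \eqref{eq.el1} and the same finite bootstrap of the differences $\gamma_{n,k}-\gamma_{n-1,k}=R^D[f_{n-1,k}]$ through Corollary~\ref{green-young}, with a number of steps depending only on $p,d,s$.

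The differences are cosmetic. You use the sharper weight integrability $\gamma_{n,k}^{p-1}\in L^{a}(D)$ for $a<d_s/(p-1)$, which gains $1-p/d_s$ per step and lets you stop once $f_{N,k}$ itself reaches $L^q$. The paper uses $p\gamma_{n,k}^{p-1}\in L^{p'}(D)$, gaining $1/p-1/d_s$ per step, and iterates until the difference is bounded. You also avoid the endpoint of Corollary~\ref{green-young}(2) by lowering exponents on the bounded domain, whereas the paper chooses $\beta_0$ with $1/(\delta\beta_0)\notin\mathbb N$.
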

 
\begin{proof}
By Lemma \ref{lm.duoper}, $\gamma_{0,k}\in M^{{d_s}}(\mathbb R^d)$, which together with 
Corollary~\ref{green-young} gives $\gamma_{n,k}\in M^{{d_s}}(\mathbb R^d)$ for all $n\ge 1$.
 Note that, by the monotonicity of the sequence $(\gamma_{n,k})_{n\ge 1}$ (see \eqref{def-gamma})
 and elementary inequality \eqref{eq.el1}, we have 
$f_{n,k}\leq p\gamma_{n,k}^{p-1}(\gamma_{n,k}-\gamma_{n-1,k})$. On the other hand, using the monotonicity of the sequence 
$(\gamma_{n,k})_{n\ge 1}$ again, and  \eqref{def-gamma} yields 
 \begin{equation}\label{vn-ineq-boot}
\gamma_{n+1,k}-\gamma_{n,k}\le R^{D}[a\cdot (\gamma_{n,k}-\gamma_{n-1,k})],
 \end{equation}
 where $a:=p\gamma_{n,k}^{p-1}$ is in $L^{p'}(D)$; note that   $p'>d_s'$. 
 Thus, by Corollary  \ref{green-young}(i) it is sufficient to show that $v_{N,k}:=\gamma_{N,k}-\gamma_{N-1,k}$ is bounded for some $N\ge 1$
 independently of $k\ge 0$.
 Let $\delta:=1/d_s'-1/p'>0$ and choose $\beta_0\in(p,{d_s})$  such that the ratio $\frac{1}{\delta\beta_0}\notin\mathbb N$.
 Assuming that $v_{n,k}\in L^{\beta_n}(D)$, with $\beta_n>1$ and  $\frac{1}{\delta\beta_n}\notin\mathbb N$,
 we obtain by using  H\"older's inequality that
 \[
 a\cdot v_{n,k}\in L^{t_n}(D), \quad 1/t_n:=1/p'+1/\beta_n=1/d_s'-\delta+1/\beta_n.
 \]
 If $t_n>d_s'$, then by Corollary~\ref{green-young}(i) and \eqref{vn-ineq-boot}, $v_{n+1,k}$ is bounded and   we are done. 
 Otherwise,  since $\frac{1}{\delta\beta_n}\notin\mathbb N$, we must have $t_n<d_s'$.
 Then, by Corollary~\ref{green-young}(ii) and \eqref{vn-ineq-boot}, we obtain $v_{n+1,k}\in L^{\beta_{n+1}}(D)$ with
 \[
 1/\beta_{n+1}=1/t_n-1/d_s'=1/\beta_n-\delta.
 \]
 It is clear that $\frac{1}{\delta\beta_{n+1}}\notin\mathbb N$.
 Consequently, for any $n\ge 1$ such that $t_n<d_s'$, we have 
 \[
 1/\beta_{n+1}=1/\beta_0-(n+1)\delta,\quad 1/t_{n+1}=1/d_s'-\delta+1/\beta_{n+1}.
 \]
 It follows that   there exists $n_0\ge 1$ such that  $t_{n_0}<d_s'<t_{n_0+1}$.
 By Corollary~\ref{green-young}(i) and \eqref{vn-ineq-boot}, $v_{n_0+2,k}$ is bounded.   Hence, letting $N:=n_0+2$ completes the proof.
\end{proof}

We introduce the shifted function
\begin{equation}
\label{eq.defw}
w_{k}:=u_{k}-\gamma_{N,k}\geq0,
\end{equation}
where $N$ is the natural number from the assertion of Lemma~\ref{lemma-regular}.
Our next goal is to show that $w_k$ is a weak solution to 
\begin{equation}
\label{eq-shifted}
-Lw_{k}=h_{N,k}(x,w_{k})+f_{N,k}\quad\text{in }D,\qquad w_k=0\quad \text{in }D^c.
\end{equation}

\begin{lemma}\label{lemma-H0s}
Let   $k\in(0,k_\mu)$ and let $w_k$ be the function defined by \eqref{eq.defw}.
Then $w_k$ is bounded and $w_{k}\in H_{0}^{s}(D)$. In particular $w_{k}$ is a critical point of $I_{N,k}$ and is a weak solution to \eqref{eq-shifted}:
\begin{equation}
\label{weak-shifted}
    \mathcal{E}(w_{k},\phi)=\int_{D}h_{N,k}(x,w_{k})\,\phi\,dx
                            +\int_{D}f_{N,k}\,\phi\,dx
    \qquad\forall\,\phi\in H_{0}^{s}(D).
  \end{equation}
\end{lemma}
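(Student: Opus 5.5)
Since $u_k=R^D[u_k^p]+kR^D\mu$ and $\gamma_{N,k}=R^D[\gamma_{N-1,k}^p]+kR^D\mu$, the measure terms cancel when we subtract. Using $\gamma_{N,k}^p-\gamma_{N-1,k}^p=f_{N,k}$ and $u_k^p-\gamma_{N,k}^p=h_{N,k}(\cdot,w_k)$, we get
\[
w_k=R^D\bigl[h_{N,k}(\cdot,w_k)+f_{N,k}\bigr]\quad\text{in }D .
\]
The plan is to establish boundedness of $w_k$ from this identity first, and only then the energy statements.

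\textbf{Boundedness.} We bootstrap exactly as in the proof of Lemma~\ref{lemma-regular}. By \eqref{eq.el1},
\[
h_{N,k}(\cdot,w_k)\le p\,u_k^{p-1}w_k,\qquad pu_k^{p-1}\in L^{p'}(D),\qquad p'>d_s'.
\]
Moreover $f_{N,k}\in L^q(D)$ with $q>d_s'$, so $R^D f_{N,k}\in L^\infty$ by Corollary~\ref{green-young}(i). We start from $w_k\le u_k\in L^{\beta_0}(D)$ with $\beta_0=p$. Hölder's inequality together with Corollary~\ref{green-young}(ii) improves the integrability index by the fixed amount $\delta=1/d_s'-1/p'$ at each step, choosing the exponents so that the borderline case is avoided. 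After finitely many steps the right-hand side lies in $L^t$ with $t>d_s'$, and Corollary~\ref{green-young}(i) then gives $w_k\in L^\infty(D)$. Since $w_k=0$ on $D^c$ by definition of $R^D$, $w_k$ is bounded on $\mathbb R^d$.

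\textbf{Energy.} Once $w_k$ is bounded, the source $g:=h_{N,k}(\cdot,w_k)+f_{N,k}$ satisfies
\[
g\le p(\|w_k\|_\infty+u_k)^{p-1}\|w_k\|_\infty+f_{N,k},
\]
so $g\in L^{\min(p',q)}(D)$, and in particular $g\in L^r$ with $r>d_s'>(2d_s)'$. Since $g\ge0$ and $w_k=R^Dg$, I would show $w_k\in H^s_0(D)$ as follows. For $g\in L^2$ this follows from $R^D g=G^{(2),D}g$ being in the form domain, with $\mathcal E(R^Dg,\phi)=(g,\phi)$. In general, approximate by $g_l:=g\wedge l$. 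Then $\mathcal E(R^Dg_l,R^Dg_l)=(g_l,R^Dg_l)\le (g,w_k)$, which is finite because $w_k$ is bounded and $g\in L^1$. So $(R^Dg_l)$ is bounded in $H^s_0(D)$ and increases to $w_k$, which gives $w_k\in H^s_0(D)$. Passing to the limit in $\mathcal E(R^Dg_l,\phi)=(g_l,\phi)$, first for $\phi\in C_c^\infty(D)$ by weak convergence and then by density, gives \eqref{weak-shifted} for all $\phi\in H^s_0(D)$. Here we use that $g\phi\in L^1$ for $\phi\in L^{2d_s}$, by the Sobolev embedding and $r>(2d_s)'$. Alternatively, Lemma~\ref{trunc-sobolev} applied with the bounded measure $g\,dx$ and the truncation level $k\ge\|w_k\|_\infty$ gives $w_k\in H^s_0(D)$ directly.

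\textbf{Critical point.} Since $h_{N,k}(x,\cdot)$ is odd-extended and $|h_{N,k}(x,s)|\le C(\gamma_{N,k}^{p-1}|s|+|s|^p)$, with $\gamma_{N,k}^{p-1}\in L^{p'}$ and $p'>d/(2s)$, Lemma~\ref{hs-compact} and the embedding $H^s_0\hookrightarrow L^{p+1}$ show that $I_{N,k}$ is $C^1$. Its derivative is exactly the difference of the two sides of \eqref{weak-shifted}, so $w_k$ is a critical point of $I_{N,k}$.

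The main obstacle is the bootstrap. One has to check that the integrability gain from the potential $pu_k^{p-1}\in L^{p'}$ is strictly positive, which holds because $p<d_s$, and to handle the borderline exponent case, which I would do as in Lemma~\ref{lemma-regular} by perturbing $\beta_0$.
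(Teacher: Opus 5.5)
Your proposal is correct and follows essentially the paper's route: you bootstrap exactly as in Lemma~\ref{lemma-regular}, applied to $w_k\le pR^D[u_k^{p-1}w_k]+R^D f_{N,k}$, to get boundedness, and then obtain $H^s_0$-membership via Lemma~\ref{trunc-sobolev} (your approximation by $g\wedge l$ is just an unpacked version of that lemma), while also supplying the identity $w_k=R^D[h_{N,k}(\cdot,w_k)+f_{N,k}]$ and the $C^1$ check of $I_{N,k}$ that the paper leaves implicit. One small correction: starting the bootstrap at $\beta_0=p$ only gives $pu_k^{p-1}w_k\in L^1$, which lies outside the range of Corollary~\ref{green-young}(ii), and lowering $\beta_0$ below $p$ would make Hölder fail; so, as in Lemma~\ref{lemma-regular}, start from some $\beta_0\in(p,d_s)$, which is legitimate because $w_k\le u_k=R^D[u_k^p+k\mu]\in M^{d_s}$ by Lemma~\ref{lm.duoper}.
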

 
\begin{proof}
First, we show that $w_k$ is bounded.
Observe that, by \eqref{eq.el1},
\begin{equation}\label{wk-ineq-boot}
w_k\le pR^{D}[(\gamma_{N,k}+w_k)^{p-1}w_k]+R^{D}[f_{N,k}]
\end{equation}
By Lemma~\ref{lemma-regular} $f_{N,k}\in L^{q}(D)$ with $q>d_s'$, so by Corollary~\ref{green-young} the second term on the right-hand side of \eqref{wk-ineq-boot} is bounded. 
Consequently, taking $a:=p(\gamma_{N,k}+w_k)^{p-1}$ we may use the same {\em bootstrap argument} 
as presented in the proof of Lemma~\ref{lemma-regular} to show that $w_k$ is bounded. 
Recall from Lemma~\ref{trunc-sobolev} that truncated solutions belong to 
$H_{0}^{s}(D)$. Since $w_k$ is bounded, there exists $M>0$ such that $w_k\wedge M=w_k$, which completes the proof.

\end{proof}

As a corollary, we get, by \eqref{eq.mon1}, that 
\begin{equation}
\label{eq.compw1}
w_{k}\le w_{k'}\text{ a.e.},\quad k\le k',\, k,k'\in (0,k_\mu).
\end{equation}
Indeed, it is enough to use $\phi=(w_k-w_{k'})^+$ as a test function in the weak formulation of the equation for $w_k-w_{k'}$.

\begin{lemma}
\label{lemma-coer}
 Let   $k\in(0,k_\mu)$ and let $w_k$ be the function defined by \eqref{eq.defw}.
There exists
  a constant $C_{k}>0$ such that
  \begin{equation}\label{coercivity}
\mathcal{E}(\phi,\phi)-\int_{D}h_{N,k}'(x,w_{k})\,\phi^{2}\,dx=    \bigl\langle I_{N,k}''(w_{k})\phi,\phi\bigr\rangle
    \geq C_{k}\mathcal E(\phi,\phi),
    \qquad\phi\in H_{0}^{s}(D),
  \end{equation}
  where $I_{N,k}$ is the energy functional defined in \eqref{func-shifted}.
  In particular, $w_{k}$ is a strict local minimiser of $I_{N,k}$ in
  $H_{0}^{s}(D)$.
\end{lemma}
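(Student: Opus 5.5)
The plan is to reduce the coercivity of $I_{N,k}''(w_k)$ to the strong stability estimate of Corollary~\ref{stability-stronger}. First we compute the derivative of the shifted nonlinearity. Since $w_k\ge 0$, \eqref{shift-nonl} gives
\[
h_{N,k}'(x,w_k)=p\bigl(w_k+\gamma_{N,k}\bigr)^{p-1}=p\,u_k^{p-1}\quad\text{a.e. in }D.
\]
Hence the quadratic form in \eqref{coercivity} is exactly
\[
\mathcal E(\phi,\phi)-p\int_D u_k^{p-1}\phi^2\,dx .
\]
The identification with $\langle I_{N,k}''(w_k)\phi,\phi\rangle$ is then a routine second-variation computation. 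We differentiate twice under the integral, using $H_{N,k}(x,\cdot)\in C^2$ with $h'_{N,k}(x,t)=p(|t|+\gamma_{N,k})^{p-1}$. To justify this and the continuity of $I''_{N,k}$ on $H_0^s(D)$, we bound
\[
h_{N,k}'(x,t)\le c\bigl(\gamma_{N,k}^{p-1}+|t|^{p-1}\bigr).
\]
We have $\gamma_{N,k}\le u_k\in L^p(D)$, so $\gamma_{N,k}^{p-1}\in L^{p'}(D)$ with $p'>d/(2s)$, and Lemma~\ref{hs-compact} applies. The term $|t|^{p-1}$ is handled by the compact embedding \eqref{sob-emb-comp} into $L^{p+1}(D)$. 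The term $\int_D f_{N,k}\,w$ is linear, and its second variation vanishes.

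Next we apply \eqref{stability-strong}. By Corollary~\ref{stability-stronger}, the minimal solution satisfies
\[
\mathcal E(\phi,\phi)-p\int_Du_k^{p-1}\phi^2\ge (1-\lambda_1(k))\,\mathcal E(\phi,\phi)
\]
with $\lambda_1(k)<1$. Setting $C_k:=1-\lambda_1(k)$ gives \eqref{coercivity}.

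For the final claim, Lemma~\ref{lemma-H0s} gives that $w_k$ is a critical point, so $I_{N,k}'(w_k)=0$. We then use a Taylor expansion:
\[
I_{N,k}(w_k+\phi)-I_{N,k}(w_k)=\tfrac12\langle I''_{N,k}(w_k)\phi,\phi\rangle-\int_D R(x,\phi)\,dx,
\]
where
\[
R(x,\phi)=H_{N,k}(x,w_k+\phi)-H_{N,k}(x,w_k)-h_{N,k}(x,w_k)\phi-\tfrac12h'_{N,k}(x,w_k)\phi^2 .
\]
The main obstacle is showing that $\int_D R=o(\mathcal E(\phi,\phi))$.

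\textbf{Case $p\ge 2$.} We use $|R|\le C\bigl(u_k^{p-2}|\phi|^3+|\phi|^{p+1}\bigr)$. The boundedness of $w_k$ and the integrability of $u_k$ do not immediately control the first term. Instead, we write
\[
|R|\le \tfrac12\,|h'_{N,k}(x,w_k+\theta\phi)-h'_{N,k}(x,w_k)|\,\phi^2 .
\]
We then argue by contradiction with a sequence $\phi_n\to0$ in $H_0^s(D)$ and normalized directions $\phi_n/\|\phi_n\|\rightharpoonup\xi$. Along a subsequence the coefficients $h'_{N,k}(x,w_k+\theta_n\phi_n)$ converge to $h'_{N,k}(x,w_k)$ in $L^{q}(D)$ for some $q>d/(2s)$. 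This follows from dominated/Vitali convergence, using $\phi_n\to0$ in $L^{p+1}$ and a.e. Lemma~\ref{hs-compact}(ii) then gives
\[
\int_D |R(x,\phi_n)|=o(\|\phi_n\|^2).
\]
Here we would have to pick $q$ slightly below $p'$ so that $|\phi_n|^{p-1}$ lies in a suitable $L^q$; this requires $(p-1)q\le p+1$, and we expect the exponent bookkeeping there to be the delicate point.

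\textbf{Case $p<2$.} Here $h'$ is $(p-1)$-Hölder, and the same argument works more directly.

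Combining the two cases, we get $I_{N,k}(w_k+\phi)-I_{N,k}(w_k)\ge \tfrac{C_k}{4}\mathcal E(\phi,\phi)$ for small $\phi$, which is strict local minimality.
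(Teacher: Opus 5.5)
Your proposal is correct and follows the paper's proof. Like the paper, you identify $h_{N,k}'(x,w_k)=p\,u_k^{p-1}$ and invoke Corollary~\ref{stability-stronger} with $C_k=1-\lambda_1(k)$, and your Taylor-remainder argument is simply an explicit version of the paper's one-line appeal to local strict convexity of the $C^2$ functional $I_{N,k}$ near its critical point $w_k$.

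The exponent bookkeeping you flag is harmless: a $q\in(d/(2s),p']$ with $(p-1)q\le p+1$ exists because $p<d_s<2d_s-1$. For $p\ge 2$ the contradiction/Vitali detour is not even needed, since H\"older and Sobolev give $\int_D u_k^{p-2}|\phi|^3\le\|u_k\|_{L^p(D)}^{p-2}\|\phi\|_{L^{3p/2}(D)}^3\le C\,\mathcal E(\phi,\phi)^{3/2}$, as $3p/2<2d_s$.
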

 
\begin{proof}
A direct computation, using \eqref{shift-nonl},  gives for $v,\phi,\eta\in H^s_0(D)$,
\begin{equation}\label{secondvar}
\left\langle I_{N,k}''(v)\phi,\eta\right\rangle
=
\mathcal E(\phi,\eta)
-p\int_D h_{N,k}'(x,v)\phi\eta\,dx,
\end{equation}
where $h_{N,k}'(x,s):=\partial_{s}h_{N,k}(x,s)=p(s+\gamma_{N,k}(x))^{p-1}$.
Thus  $I_{N,k}$  is of
class $C^2$ on $H_0^s(D)$. 
Since $u_{k}=w_{k}+\gamma_{N,k}$, we have $h_{N,k}'(x,w_{k})=pu_{k}^{p-1}$.
Therefore from \eqref{secondvar} 
\begin{equation}\label{secondvar2}
  \bigl\langle I_{N,k}''(w_{k})\phi,\phi\bigr\rangle
  =\mathcal E(\phi,\phi)-p\int_{D}u_{k}^{p-1}\phi^{2}\,dx.
\end{equation}
By Corollary~\ref{stability-stronger} there exists $c_{k}>0$ such that
\begin{equation}\label{stab-use}
  \mathcal E(\phi,\phi)-p\int_{D}u_{k}^{p-1}\phi^{2}\,dx\geq c_{k}\mathcal E(\phi,\phi)
  \qquad\forall\,\phi\in H_{0}^{s}(D).
\end{equation}
Combining \eqref{secondvar2} and \eqref{stab-use} yields \eqref{coercivity}
with $C_{k}:=c_{k}$.
 
To see that $w_{k}$ is a strict local minimiser, note that \eqref{coercivity}
implies that $I_{N,k}$ is strictly convex in a neighbourhood of $w_{k}$ in
$H_{0}^{s}(D)$, and $I_{N,k}'(w_{k})=0$ by Lemma~\ref{lemma-H0s}.
\end{proof}

\begin{lemma}\label{uniform_bound}
As $k \uparrow k_\mu$, the sequence $(w_k)$ defined by \eqref{eq.defw} is uniformly bounded in $H_0^s(D)$.
\end{lemma}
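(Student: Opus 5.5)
The plan is to combine three ingredients: an \emph{energy bound} $I_{N,k}(w_k)\le 0$, the weak formulation \eqref{weak-shifted} tested with $w_k$, and the algebraic inequalities of Lemma~\ref{lm.algebraic} together with the semi-stability of $u_k$. All constants must be independent of $k$, and the only $k$-dependence allowed is through quantities that are monotone in $k$ by \eqref{eq.mon1} and \eqref{eq.compw1}. Throughout write $w=w_k$, $\gamma=\gamma_{N,k}$, $f=f_{N,k}$, $h=h_{N,k}$, $H=H_{N,k}$, $I=I_{N,k}$.

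\emph{Step 1: $I(w_k)\le 0$.} I would consider the path $t\mapsto I(tw)$, $t\in[0,1]$. By \eqref{weak-shifted},
\[
\frac{d}{dt}I(tw)=t\,\mathcal E(w,w)-\int_D h(x,tw)w\,dx-\int_D fw\,dx
=\int_D\bigl(t\,h(x,w)-h(x,tw)\bigr)w\,dx-(1-t)\int_D fw\,dx .
\]
The function $s\mapsto h(x,s)$ is convex on $[0,\infty)$ with $h(x,0)=0$, so $h(x,tw)\le t\,h(x,w)$. Also $f\ge0$ and $w\ge0$. Hence the derivative is $\le 0$, and $I(w)\le I(0)=0$.

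\emph{Step 2: combining with the equation.} Multiply $I(w)\le0$ by $(2+c_p)$ and substitute $\mathcal E(w,w)=\int_D h(x,w)w+\int_D fw$ from \eqref{weak-shifted}. Then apply Lemma~\ref{lm.algebraic}(4) with $s=\gamma$ and $t=w$; note that $H(x,w)=F(\gamma,w)$ and $h(x,w)=F_t(\gamma,w)$. This gives
\[
\frac{c_p}{2}\Bigl(\mathcal E(w,w)-p\int_D\gamma^{p-1}w^2\,dx\Bigr)\le (1+c_p)\int_D f_{N,k}\,w\,dx .
\]
Semi-stability of $u_k$ (Theorem~\ref{stability}) gives $\mathcal E(w,w)\ge p\int_D(\gamma+w)^{p-1}w^2$. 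Therefore the bracket on the left is at least $p\int_D\bigl[(\gamma+w)^{p-1}-\gamma^{p-1}\bigr]w^2$, which is a nonnegative superlinear quantity. For $p\ge2$ it dominates $p\int_D w^{p+1}$. For $1<p<2$ I would instead keep a fraction $\theta$ of $\mathcal E(w,w)$ on the left and use the elementary bound $(\gamma+w)^{p-1}w^2\ge 2^{p-2}\min\{\gamma^{p-2},w^{p-2}\}w^3$ to handle the large-$w$ region, to be refined if needed.

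\emph{Step 3: closing the estimate.} By \eqref{eq.mon1}, $0\le f_{N,k}\le f_{N,k'}$ for every $k'\in(k,k_\mu)$. Using Lemma~\ref{lemma-regular}, and checking that its bootstrap constants are uniform in the parameter (it is run for $\gamma_{n,k_\mu}$, which is defined at $k=k_\mu$ as well), I obtain $\sup_{k<k_\mu}\|f_{N,k}\|_{L^q}<\infty$ for some $q>d_s'$. Then by H\"older and Sobolev, $\int_D fw\le C\|w\|_{L^{q'}}\le C\,\mathcal E(w,w)^{1/2}$, since $q'<d_s<2d_s$. Next I would bound $\mathcal E(w,w)$ itself from \eqref{weak-shifted}: $\int_D h(x,w)w\le p\int_D(\gamma+w)^{p-1}w^2$, and I would split this into the $\gamma^{p-1}w^2$ part and the superlinear part already controlled in Step 2. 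This yields a bound of the form $\mathcal E(w,w)\le C\bigl(1+\mathcal E(w,w)^{1/2}\bigr)$, hence a uniform $H_0^s$ bound.

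\emph{Main obstacle.} The hardest part is controlling the term $p\int_D\gamma_{N,k}^{p-1}w_k^2$ without any $k$-independent coercivity constant, since $c_k$ from Corollary~\ref{stability-stronger} may degenerate as $k\uparrow k_\mu$. My first attempt is to use $\gamma_{N,k}\le\gamma_{N,k_\mu}$ and show that $\gamma_{N,k_\mu}^{p-1}\in L^r$ with $r>d/(2s)$. This should follow from $\gamma_{N,k_\mu}\in M^{d_s}$ and $p<d_s$, giving $(p-1)r<d_s$ for admissible $r$; this needs to be checked, and if $r>d/(2s)$ fails I would bootstrap further as in Lemma~\ref{lemma-regular}. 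With that integrability, Lemma~\ref{hs-compact}(ii) and an interpolation $\int_D\gamma^{p-1}w^2\le\varepsilon\,\mathcal E(w,w)+C_\varepsilon\int_D w^{p+1}$ close the argument, with the $\int_D w^{p+1}$ term absorbed using Step 2.
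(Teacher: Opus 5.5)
Your plan can be made to work, and it would give a proof different from the paper's, but as written it has two genuine gaps.

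\textbf{Step~1 has the sign reversed.} Convexity of $s\mapsto h(x,s)$ on $[0,\infty)$ with $h(x,0)=0$ gives $h(x,tw)\le t\,h(x,w)$. Hence $\int_D(t\,h(x,w)-h(x,tw))w\,dx\ge0$, and $\frac{d}{dt}I(tw)$ is this nonnegative quantity minus $(1-t)\int_D fw\,dx\ge0$. It therefore has no definite sign. Your argument never uses stability, and stability is exactly what is needed here. For $t\in[0,1]$,
\[
\frac{d^2}{dt^2}I(tw)=\mathcal E(w,w)-p\int_D(\gamma+tw)^{p-1}w^2\,dx\ge\mathcal E(w,w)-p\int_D u_k^{p-1}w^2\,dx\ge0,
\]
and $\frac{d}{dt}I(tw)\big|_{t=1}=0$ by \eqref{weak-shifted}. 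So $t\mapsto I(tw)$ is nonincreasing on $[0,1]$ and $I(w_k)\le0$. Alternatively, you can bypass Step~1 as the paper does. Testing semi-stability with $w_k$ and applying Lemma~\ref{lm.algebraic}(3) together with \eqref{weak-shifted} gives directly $c_p\mathcal E(w,w)\le c_pp\int_D\gamma^{p-1}w^2\,dx+(1+c_p)\int_D fw\,dx$, which is your Step~2 inequality.

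\textbf{The closing step fails for $1<p<2$.} This is the main case, not a marginal one, since $p<d_s\le2$ whenever $d\ge4s$, in particular for every $d\ge4$.

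For $p\ge2$ your argument does close. Then $Q:=p\int_D[(\gamma+w)^{p-1}-\gamma^{p-1}]w^2\,dx\ge p\int_D w^{p+1}\,dx$, and Step~2 with semi-stability gives $Q\le C\,\mathcal E(w,w)^{1/2}$.

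For $p<2$, however, the integrand of $Q$ is only of order $\gamma^{p-2}w^3\le w^{p+1}$ on $\{w<\gamma\}$. So $Q$ controls $w^{p+1}$ only on $\{w\ge\gamma\}$. The absorption of $\int_D w^{p+1}\,dx$ in your last paragraph is therefore unavailable, and nothing in the plan controls $\int_{\{w<\gamma\}}\gamma^{p-1}w^2\,dx$.

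The missing idea is to split by the size of $\gamma$ itself.
\begin{itemize}
\item[(a)] Set-up: let $\gamma_*:=\gamma_{N,k_\mu}\in M^{d_s}$ and fix $r\in(d/(2s),d_s/(p-1))$; this interval is nonempty because $p<d_s$. Put $\varepsilon(M):=\|\gamma_*^{p-1}\mathbf 1_{\{\gamma_*>M\}}\|_{L^r(D)}$, which tends to $0$ as $M\to\infty$.
\item[(b)] On $\{\gamma>M\}$: since $\gamma\le\gamma_*$, H\"older and Sobolev ($2r'<2d_s$) give $p\int_{\{\gamma>M\}}\gamma^{p-1}w^2\,dx\le C\varepsilon(M)\,\mathcal E(w,w)$, uniformly in $k$.
\item[(c)] On $\{w<\gamma\le M\}$: the integrand is at most $M^{p+1}$.
\item[(d)] On $\{w\ge\gamma\}$: the mean value theorem gives $\gamma^{p-1}w^2\le w^{p+1}\le\frac{2^{2-p}}{p-1}[(\gamma+w)^{p-1}-\gamma^{p-1}]w^2$.
\end{itemize}
Thus $p\int_D\gamma^{p-1}w^2\,dx\le C\varepsilon(M)\,\mathcal E(w,w)+pM^{p+1}\ell^d(D)+C\,\mathcal E(w,w)^{1/2}$. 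Insert this into $\mathcal E(w,w)\le p\int_D\gamma^{p-1}w^2\,dx+C\,\mathcal E(w,w)^{1/2}$ and choose $M$ with $C\varepsilon(M)\le\frac12$. This yields the uniform bound for every $p\in(1,d_s)$.

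\textbf{Comparison with the paper.} Once repaired, your route genuinely differs from the paper's. The paper controls $p\int_D\gamma_{N,k}^{p-1}w_k^2$ through a uniform spectral gap $\lim_{k\uparrow k_\mu}\Lambda_1(k)>1$. It proves this by compactness of normalized minimizers, the strict inequality $\gamma_{N,k_\mu}<\gamma_{N,k_\mu}+w_{\bar k}$, \eqref{eq.compw1} and semi-stability. You use semi-stability only to extract the superlinear term $Q$, and otherwise need just the uniform domination $\gamma_{N,k}\le\gamma_{N,k_\mu}\in M^{d_s}$. This avoids the compactness step and gives an explicit constant.
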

\begin{proof}
Testing \eqref{weak-shifted} with $w_k$ gives
\begin{equation}\label{stab-use12}
\mathcal E(w_k,w_k) = \int_D h_{N,k}(x, w_k)w_k dx + \int_D f_{N,k} w_k dx.
\end{equation}
By Lemma~\ref{lemma-coer}  and \eqref{secondvar} we obtain
\begin{equation}\label{stab-use13}
\mathcal E(w_k,w_k) \ge  \int_D h'_{N,k}(x, w_k) w_k^2 dx=\int_D p(w_k + \gamma_{N,k})^{p-1} w_k^2 dx.
\end{equation}
We apply Lemma~\ref{lm.algebraic}(3) with  $s= \gamma_{N,k}$ and $t = w_k$ and get
\[
p(\gamma_{N,k}+w_k)^{p-1}w_k^2 - (1+c_p)[(\gamma_{N,k}+w_k)^{p}-\gamma_{N,k}^p]w_k\ge -c_p p\gamma_{N,k}^{p-1}w_k^2.
\]
Hence
\begin{equation}\label{stab-use14}
\begin{split}
p(\gamma_{N,k}+w_k)^{p-1}w_k^2&\ge (1+c_p) \left[ (\gamma_{N,k}+w_k)^{p}w_k -\gamma_{N,k}^{p}w_k\right]  -  c_pp\gamma_{N,k}^{p-1}w_k^2
\\&=(1+c_p) h_{N,k}(x, w_k)w_k-  c_pp\gamma_{N,k}^{p-1}w_k^2.
\end{split}
\end{equation}
Combining \eqref{stab-use12}--\eqref{stab-use14}, we deduce
\begin{equation}
\label{eq.ee1}
c_p \mathcal E(w_k,w_k)  \le c_p \int_D p\gamma_{N,k}^{p-1} w_k^2 dx + (1+c_p)\int_D f_{N,k} w_k dx.
\end{equation}
Define the Rayleigh quotient associated with the lower-order weight
$$\Lambda_1(k) := \inf_{\phi \in H_0^s(D) \setminus \{0\}} \frac{\mathcal E(\phi,\phi)}{\int_D p\gamma_{N,k}^{p-1} \phi^2 dx}.$$
We want to prove that
\[
\lim_{k\uparrow k_\mu}\Lambda_{1}(k)>1.
\]
Since $\gamma_{N,k}$ is non-decreasing in  $k\in (0,k_\mu)$, it follows that 
$\Lambda_{1}(k)$ is nonincreasing in $k\in (0,k_\mu)$
Therefore, the limit $\lim_{k\uparrow k_\mu}\Lambda_{1}(k)$ exists, 
and it is enough to show that, for a sequence $k_{j}\uparrow k_\mu$, one has
\[
\liminf_{j\rightarrow\infty}\Lambda_{1}(k_{j})>1.
\]
For each $j$, let $\phi_{j}$ be a positive minimizer of $\Lambda_{1}(k_{j})$
normalized to satisfy
\[
\int_D p\gamma_{N,k_j}^{p-1} \phi^2_j\,dx=1.
\]
Then
\[
\Lambda_{1}(k_{j})=\mathcal{E}(\phi_{j},\phi_{j}).
\]
We claim that $(\phi_{j})$ is bounded in $H_{0}^{s}(D)$. Indeed, fix $\overline{k}\in(0,k_\mu)$ 
and choose any $\varphi\in H_{0}^{s}(D)\setminus\{0\}$. For $j$ large enough we have $k_{j}>\overline{k}$ hence
\[
\Lambda_{1}(k_{j})\le\frac{\mathcal{E}(\varphi,\varphi)}{\int_{D}p\gamma_{N,k_{j}}^{p-1}\varphi^{2}dx}
\le\frac{\mathcal{E}(\varphi,\varphi)}{\int_{D}p\gamma_{N,\overline{k}}^{p-1}\varphi^{2}dx}=:C_{\varphi},
\]
and thus
\[
\mathcal{E}(\phi_{j},\phi_{j})=\Lambda_{1}(k_{j})\le C_{\varphi}.
\]
Thus $(\phi_{j})$ is bounded in $H_{0}^{s}(D)$. Passing to a subsequence, we may assume
\begin{align*}
\phi_{j}&\rightarrow\phi^{*} &&\text{weakly in } H_{0}^{s}(D),\\
\phi_{j}&\rightarrow\phi^{*} &&\text{strongly in } L^{r}(D) \text{ for any } r\in[2,2d_s),\\
\phi_{j}(x)&\rightarrow\phi^{*}(x) &&\text{for a.e. } x\in D.
\end{align*}
Thus, by the normalization of $\phi_{j}$ and Lemma~\ref{hs-compact}, we obtain 
\begin{equation}
 \label{equation2}
1=\lim_{j\to \infty}\int_{D}p\gamma_{N,k_{j}}^{p-1}\phi_{j}^{2}dx= \int_{D}p\gamma_{N,k_\mu}^{p-1}(\phi^{*})^{2}dx.
\end{equation}
On the other hand,
\[
\liminf_{j\rightarrow\infty}\Lambda_{1}(k_{j})=\liminf_{j\rightarrow\infty}\mathcal{E}(\phi_{j},\phi_{j})\ge\mathcal{E}(\phi^{*},\phi^{*}).
\]
This together with \eqref{equation2}  yields
\[
\liminf_{j\rightarrow\infty}\Lambda_{1}(k_{j})\ge\frac{\mathcal{E}(\phi^{*},\phi^{*})}{\int_{D}p\gamma_{N,k_\mu}^{p-1}(\phi^{*})^{2}dx}.
\]
Consequently, by \eqref{eq.compw1} and Lemma~\ref{lemma-coer} 
\begin{align*}
\liminf_{j\rightarrow\infty}\Lambda_{1}(k_{j})&\ge\frac{\mathcal{E}(\phi^{*},\phi^{*})}{\int_{D}p\gamma_{N,k_\mu}^{p-1}(\phi^{*})^{2}dx}
>\frac{\mathcal{E}(\phi^{*},\phi^{*})}{\int_{D}p(\gamma_{N,k_\mu}+w_{\overline{k}})^{p-1}(\phi^{*})^{2}dx}\\
&=\lim_{k\uparrow k_\mu}\frac{\mathcal{E}(\phi^{*},\phi^{*})}{\int_{D}p(\gamma_{N,k}+w_{\overline{k}})^{p-1}(\phi^{*})^{2}dx}
\ge\lim_{k\uparrow k_\mu}\frac{\mathcal{E}(\phi^{*},\phi^{*})}{\int_{D}p(\gamma_{N,k}+w_{k})^{p-1}(\phi^{*})^{2}dx}\ge1.
\end{align*}
Returning to the energy estimate \eqref{eq.ee1}, we infer that
\[
c_p \left(1-\frac{1}{\Lambda_1(k)} \right) \mathcal E(w_k,w_k) \le (1+c_p) \int_D f_{N,k} w_k dx \le (1+c_p) \|f_{N,k}\|_q \|w_k\|_{q'}
\]
for any $q>\frac{d}{2s}$. By the Sobolev embedding
\[
c_p \left( 1 - \frac{1}{\Lambda_1(k)} \right) \sqrt{\mathcal E(w_k,w_k)}  \le c(1+c_p) \|f_{N,k}\|_q.
\]
Since $\liminf_{k \uparrow k_\mu} \Lambda_1(k) > 1$ and $f_{N,k}$ is  bounded in $L^q(D)$ for some $q > \frac{d}{2s}$,
see Lemma~\ref{lemma-regular}, this implies that $(w_k)$ is  bounded in $H_0^s(D)$.
\end{proof}

\begin{theorem}
\label{th.critex1}
Let $1 < p < {d_s}$. For the critical parameter $k = k_\mu$, the problem \eqref{eq1.1} admits a minimal  solution.
\end{theorem}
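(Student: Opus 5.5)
The plan is to pass to the limit $k\uparrow k_\mu$ along the minimal branch, as in Remark~\ref{rem.critex}, and to use Lemma~\ref{uniform_bound} to obtain the missing global integrability. Set $u^*:=\lim_{k\uparrow k_\mu}u_k$, which exists pointwise because $k\mapsto u_k$ is nondecreasing. Remark~\ref{rem.critex} already gives, by monotone convergence,
\[
u^*=R^D(u^*)^p+k_\mu R^D\mu,
\]
with $u^*<\infty$ a.e. It therefore remains to show $u^*\in L^p(D)$, which is the real content of the theorem.

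For this I would split $u_k=w_k+\gamma_{N,k}$ as in \eqref{eq.defw}, with $N$ from Lemma~\ref{lemma-regular}.

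\textbf{The $\gamma$ part.} Each $\gamma_{N,k}$ is nondecreasing in $k$ and lies in $M^{d_s}$ with a bound uniform in $k\le k_\mu$. I would prove this by induction on $n$: $\gamma_{0,k}=kR^D\mu$ is controlled by \eqref{eq.margr} with constant $Ck_\mu\mu(D)$. Passing from $\gamma_{n,k}\in M^{d_s}\subset L^{r}(D)$ for all $r<d_s$ to $\gamma_{n+1,k}$ uses Lemma~\ref{lm.duoper} again, since $\gamma_{n,k}^p\in L^1(D)$ when $p<d_s$, with norms depending only on $k_\mu$. Since $p<d_s$, these bounds give $\sup_{k<k_\mu}\|\gamma_{N,k}\|_{L^p(D)}<\infty$ (indeed in $L^{\hat p}$).

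\textbf{The $w$ part.} Lemma~\ref{uniform_bound} gives $\sup_k\mathcal E(w_k,w_k)<\infty$. The Sobolev embedding $H^s_0(D)\hookrightarrow L^{2d_s}(D)$ then bounds $\|w_k\|_{L^{2d_s}}$ uniformly. Since $p<d_s<2d_s$ and $D$ is bounded, $\|w_k\|_{L^p(D)}$ is uniformly bounded.

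Hence $\|u_k\|_{L^p(D)}\le C$ independently of $k<k_\mu$, and Fatou's lemma (or monotone convergence) gives $u^*\in L^p(D)$. Thus $u^*$ solves \eqref{eq1.1} at $k=k_\mu$.

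\textbf{Minimality.} If $u$ is any solution at $k_\mu$, then $u$ is a barrier for the iteration with every $k<k_\mu$, as in the proof of Theorem~\ref{k-star-sol}. So $u_k\le u$ for all $k<k_\mu$, and therefore $u^*\le u$. By Proposition~\ref{prop.uncrit}, this solution is in fact unique.

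\textbf{Main obstacle.} The delicate step is uniformity in $k$ of all the bounds feeding Lemma~\ref{uniform_bound}. In particular, $\|f_{N,k}\|_{L^q}$ must be bounded uniformly as $k\uparrow k_\mu$. The bootstrap in Lemma~\ref{lemma-regular} must be rechecked so that its constants depend only on $k_\mu\mu(D)$, using the monotonicity \eqref{eq.mon1} to dominate by $f_{N,k_\mu}$. The uniform $M^{d_s}$ bound on $\gamma_{N,k}$ needs the same check. Once this is in place, the rest is routine monotone convergence.
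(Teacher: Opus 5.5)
Your proposal is correct and follows essentially the paper's route: split $u_k=\gamma_{N,k}+w_k$, bound $w_k$ through Lemma~\ref{uniform_bound} and the Sobolev embedding, bound $\gamma_{N,k}$ in $L^p(D)$ through the Marcinkiewicz estimate, and pass to the limit via Remark~\ref{rem.critex}. The differences are minor: you use uniform $L^p$ bounds plus monotone convergence where the paper identifies the weak $H^s_0(D)$ limit $w_{k_\mu}$, and you spell out two points the paper leaves implicit, namely the minimality argument and the uniform $L^q$ bound on $f_{N,k}$ obtained from $f_{N,k}\le f_{N,k_\mu}$ via \eqref{eq.mon1}.
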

\begin{proof}
By Lemma~\ref{uniform_bound}, the sequence $(w_k)$ is bounded in 
$H_0^s(D)$. Extracting a weakly convergent subsequence, 
$w_k \rightharpoonup w_{k_\mu}$ in $H_0^s(D)$ as $k \uparrow k_\mu$. 
On the other hand $u_k=\gamma_{N,k} + w_{k},\, k\in (0,k_\mu)$.   By Remark \ref{rem.critex}
$u_k\nearrow u_{k_\mu}$ and 
\[
u_{k_\mu}=R^D(u_{k_\mu})^p+k_\mu R^D\mu.
\]
Clearly,  $u_{k_\mu} = \gamma_{N,k_\mu} + w_{k_\mu}$.
To see that $u_{k_\mu} \in L^p(D)$, we analyze both components. First, since $w_{k_\mu} \in H_0^s(D)$, 
the Sobolev embedding ensures $w_{k_\mu} \in L^{p}(D)$. Moreover $\gamma_{N,k_\mu}$ 
belongs to $M^{{d_s}}(\mathbb R^d)$. Since $D$ is bounded and $p < {d_s}$, we have 
$\gamma_{N,k_\mu} \in L^p(D)$. Consequently, the sum $u_{k_\mu} \in L^p(D)$.
\end{proof}

\appendix

\section{Fourier symbol}

By \cite[Theorem 14.10]{Sato} the symbol $\psi$ of the operator $-L$ admits the form
\begin{equation}
\label{eq.symb1}
\psi(\xi)=\int_{\mathbb S^{d-1}}|\xi\cdot z|^{2s} (1-i \tan {\pi s} \operatorname{sgn}(\xi\cdot z))\, \lambda(dz) 
\text { for } s \neq 1/2,
\end{equation}
and
\begin{equation}
\label{eq.symb2}
\psi(\xi)= \int_{\mathbb S^{d-1}}(|\xi\cdot z|+i \frac{2}{\pi}(\xi\cdot z) \log |\xi\cdot z|) \lambda(dz)+i\langle b, \xi\rangle \quad \text { for } s=1/2.
\end{equation}

\begin{remark}
Equations \eqref{eq.symb1} and \eqref{eq.symb2} show 
that the non-degeneracy condition \eqref{ellipticity} is equivalent to 
\begin{equation}
\label{eq.symbolcomp1}
\operatorname{Re}\psi(\xi)\asymp|\xi|^{2s}. 
\end{equation}
They also imply 
\begin{equation}
\label{eq.symbolcomp2}
|\operatorname{Im}\psi(\xi)| \lesssim \operatorname{Re}\psi(\xi). 
\end{equation}
When \(s\neq1/2\), the latter estimate follows directly from \eqref{eq.symb1} and, in fact, 
does not require \eqref{ellipticity}. The case \(s=1/2\) requires a separate argument because the imaginary part 
of the characteristic exponent contains a logarithmic contribution.
Indeed,
\[
\operatorname{Im}\psi(\xi)
=
c\int_{\mathbb S^{d-1}}
(\xi\cdot\theta)\log|\xi\cdot\theta|\,\lambda(d\theta)+\langle b, \xi\rangle.
\]
By  \cite[Theorem 14.10]{Sato}  the centering condition
\[
\int_{\mathbb S^{d-1}}\theta\,\lambda(d\theta)=0
\]
must hold for strictly $1$-stable L\'evy processes.
Writing
\[
\xi=|\xi|\eta,\qquad \eta\in\mathbb S^{d-1},
\]
we obtain
\[
\int_{\mathbb S^{d-1}}
(\xi\cdot\theta)\log|\xi\cdot\theta|\,\lambda(d\theta)
=
|\xi|\log|\xi|\,
\eta\cdot \int_{\mathbb S^{d-1}}\theta\,\lambda(d\theta)
+
|\xi|
\int_{\mathbb S^{d-1}}
(\eta\cdot\theta)\log|\eta\cdot\theta|\,\lambda(d\theta).
\]
By the centering the first term on the right-hand side disappears and we obtain
\[
\operatorname{Im}\psi(\xi)
=
c|\xi|
\int_{\mathbb S^{d-1}}
(\eta\cdot\theta)\log|\eta\cdot\theta|\,\lambda(d\theta)+\langle b, \xi\rangle.
\]
On the other hand, the real part is of the form
\[
\operatorname{Re}\psi(\xi)
=
c'
\int_{\mathbb S^{d-1}}|\xi\cdot\theta|\,\lambda(d\theta)
=
c'|\xi|
\int_{\mathbb S^{d-1}}|\eta\cdot\theta|\,\lambda(d\theta).
\]
Thus, by \eqref{ellipticity},
\[
\operatorname{Re}\psi(\xi)\asymp |\xi|.
\]
Moreover, the function
\[
\eta\mapsto
\int_{\mathbb S^{d-1}}
(\eta\cdot\theta)\log|\eta\cdot\theta|\,\lambda(d\theta)
\]
is bounded on $\mathbb S^{d-1}$, because $t\log|t|\to 0$ as $t\to 0$ and
$\lambda$ is finite. Hence
\[
|\operatorname{Im}\psi(\xi)|
\le C|\xi|,
\]
which in turn combined  with $\operatorname{Re}\psi(\xi)\asymp |\xi|$ gives \eqref{eq.symbolcomp2}.
\end{remark}

\section{Fine topology}

The fine topology (resp. co-fine topology) on $\mathbb R^d$ related to the operator $L$ is the coarsest topology that makes all the $(P_t)$-excessive (resp. $(P^{*}_t)$-excessive) functions continuous. 
By \cite[Proposition 10.8]{Sharpe} a nearly Borel function $u:\mathbb R^d\to [0,\infty]$ is finely continuous (resp. co-finely continuous)
on  $\mathbb R^d$ if and only if   the process 
\begin{equation}
\label{eq.efq}
[0,\infty)\ni t\mapsto u(X_t)\in \overline {\mathbb R }\quad \text{ is right-continuous
at } t=0 \, \mathbb P_x\text{-a.s.},\, x\in\mathbb R^d
\end{equation}
(resp. $\mathbb P^*_x\text{-a.s.},\, x\in\mathbb R^d$).
Let $V$ be a finely open Borel subset of $\mathbb R^d$. Then (see e.g. \cite[Exercise 10.25]{Sharpe})
\begin{equation}
\label{eq.ppp1}
\text{if } u,v\in \mathscr B(\mathbb R^d) \text { are finely continuous on }V \text{ and }u=v \text{ a.e. then } u(x)=v(x),\, x\in V.
\end{equation}
\section{Green functions}

\begin{proposition}
\label{prop.green}
The following holds:
\begin{enumerate}
\item Let $V,U$ be open subsets of $\mathbb R^d$ and $V\subset U$. Then $G_V\le G_U$;
\item Let $(V_n)$ be an increasing  sequence of open subsets of $\mathbb R^d$. Set $U:=\bigcup_{n\ge 1}V_n$.
Then $G_{V_n}\nearrow G_U$;
\item Let $U$ be an open subset of $\mathbb R^d$ and  let $K\subset U$ be  compact and polar:
$\mathbb P_x(\sigma_K<\infty)=0,\, x\in \mathbb R^d\setminus K$.
 Then $G_{U}=G_{U\setminus K}$ on $U\setminus K$.
\end{enumerate}
\end{proposition}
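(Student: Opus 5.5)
The three assertions are standard facts about killed processes, and I would derive each from the probabilistic representation
\[
R^{V}f(x)=\mathbb E_x\int_0^{\tau_V}f(X_t)\,dt,\qquad x\in V,\ f\in p\mathscr B(\mathbb R^d),
\]
together with the uniqueness of the Green kernel given by \cite[Theorem VI.1.4]{BG}. In each case the plan is to prove the statement first at the level of the resolvents $R^V$ and then transfer it to the kernels $G_V$.

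For (1), since $V\subset U$ we have $\tau_V\le\tau_U$ pathwise, so $R^Vf\le R^Uf$ on $V$ for every positive $f$. This gives $G_V(x,\cdot)\le G_U(x,\cdot)$ a.e. for each fixed $x$. To upgrade to everywhere, note that for fixed $x$ both $G_V(x,\cdot)$ and $G_U(x,\cdot)$ are $(P^{*,V}_t)$-excessive on $V$. For $G_U(x,\cdot)$ this holds because a $(P^{*,U}_t)$-excessive function restricted to $V$ is $(P^{*,V}_t)$-excessive, since $P^{*,V}_t\le P^{*,U}_t$. Two excessive functions that are ordered a.e. are ordered everywhere, because $\alpha R^{*,V}_\alpha$ does not see null sets and $w=\lim_{\alpha\to\infty}\alpha R^{*,V}_\alpha w$. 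Outside $V\times V$ the inequality is trivial because $G_V=0$ there.

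For (2), the key point is that $\tau_{V_n}\nearrow\tau_U$ $\mathbb P_x$-a.s. for $x\in U$. This uses quasi-left continuity of the L\'evy process: if $\tau_{V_n}\uparrow T<\tau_U$, then $X_{\tau_{V_n}}\to X_T\in U$, which is incompatible with $X_{\tau_{V_n}}\notin V_n$ once we know $X_T\in V_m$ for some $m$. With this in hand, monotone convergence gives $R^{V_n}f\nearrow R^Uf$. By (1) the kernels $G_{V_n}$ increase, so their limit $\tilde G$ satisfies $\tilde G\le G_U$ and $\int\tilde G(x,y)f(y)\,dy=R^Uf(x)$. Since $\tilde G(x,\cdot)$ is an increasing limit of co-excessive functions it is co-excessive, and uniqueness of the kernel then yields $\tilde G=G_U$ everywhere. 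I expect this step to be the main obstacle: the stopping-time convergence requires care when $X$ exits $U$ by a jump from a point of $\partial V_n$, and the everywhere identification requires checking co-excessivity of the limit with respect to $(P^{*,U}_t)$ rather than $(P^{*,V_n}_t)$.

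For (3), polarity of $K$ gives $\mathbb P_x(\sigma_K<\infty)=0$ for $x\notin K$, hence $\tau_{U\setminus K}=\tau_U$ $\mathbb P_x$-a.s. for $x\in U\setminus K$. Therefore $R^{U\setminus K}f=R^Uf$ on $U\setminus K$, and $K$ is Lebesgue-null, so the kernels agree a.e. in $y$ for each fixed $x$. Both kernels are co-excessive in $y$ for the dual process killed outside $U\setminus K$; here I use that a compact set polar for $X$ is also copolar, which follows from \cite[Proposition VI.1.19]{BG} in the duality setting. The a.e.-to-everywhere argument from (1) then concludes.
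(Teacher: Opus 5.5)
Your proposal is correct and follows the paper's proof essentially step for step: the comparisons $\tau_V\le\tau_U$, $\tau_{V_n}\uparrow\tau_U$ and $\tau_{U\setminus K}=\tau_U$ are used at the level of $R^Vf$, and the resulting a.e.\ kernel identities are upgraded to everywhere via co-excessiveness. Your $\alpha R^{*,V}_\alpha w\uparrow w$ argument is equivalent to the paper's use of \eqref{eq.ppp1}, and your quasi-left-continuity argument supplies the detail the paper leaves as ``one verifies''; the point you flag in (2) closes by localizing. For $n\ge m$ the restrictions of $G_{V_n}(x,\cdot)$ and $G_U(x,\cdot)$ to $V_m$ are $(P^{*,V_m}_t)$-excessive by the restriction fact from your part (1), hence so is $\tilde G(x,\cdot)|_{V_m}$, and your a.e.-to-everywhere argument on each $V_m$ gives $\tilde G=G_U$ on $U$ (the same restriction fact applied to the open set $U\setminus K$ makes the copolarity detour in (3) unnecessary).
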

\begin{proof}
(i) First observe that for $x\in V$,
\[
\tau_V\leq \tau_U,\quad \mathbb P_x\text{-a.s.}
\]
Thus, by \eqref{eq.rdef}, for every $f\in p\mathscr B(\mathbb R^d)$  and $x\in V$,
\[
R^V f(x)
=
\mathbb E_x\int_0^{\tau_V} f(X_t)\,dt
\leq
\mathbb E_x\int_0^{\tau_U} f(X_t)\,dt
=
R^U f(x).
\]
By the Green kernel representation \eqref{dual.res2}, we obtain
\[
\int_{\mathbb R^d} G_V(x,y)f(y)\,dy
\leq
\int_{\mathbb R^d} G_U(x,y)f(y)\,dy,
\quad x\in V, f\in p\mathscr B(\mathbb R^d).
\]
Therefore, for every fixed $x\in V$,
\[
G_V(x,y)\leq G_U(x,y)
\quad \text{for a.e. } y\in \mathbb R^d.
\]
By the very definition  the functions $G_V(x,\cdot)$ and
$G_U(x,\cdot)$ are $(P^{*,V}_t)$-excessive and $(P^{*,U}_t)$-excessive, respectively.  
Therefore they are co-finely continuous on $V$. 
Applying \eqref{eq.ppp1} to the dual process, we obtain
\[
G_V(x,y)\leq G_U(x,y),\qquad x,y\in V.
\]
Since both kernels are extended by zero outside the corresponding domains,
this proves (i). 

(ii) By the first part, the sequence $(G_{V_n})$ is increasing and bounded from
above by $G_U$. Hence the pointwise limit
\[
H(x,y):=\lim_{n\to\infty}G_{V_n}(x,y)
\]
exists and satisfies $H\leq G_U$. Furthermore, by \cite[Proposition II.2.2]{BG}, $H(x,\cdot)$ is co-finely 
continuous on $U$ for any $x\in U$.
We claim that $H=G_U$. Since $V_n\uparrow U$, one verifies that 
\[
\tau_{V_n}\uparrow \tau_U,\qquad \mathbb P_x\text{-a.s.},\quad x\in U.
\]
Consequently, for every $f\in p\mathscr B(\mathbb R^d)$ and $x\in U$,
the monotone convergence theorem gives
\[
R^{V_n}f(x)
=
\mathbb E_x\int_0^{\tau_{V_n}} f(X_t)\,dt
\uparrow
\mathbb E_x\int_0^{\tau_U} f(X_t)\,dt
=
R^U f(x).
\]
Equivalently,
\[
\int_{\mathbb R^d}G_{V_n}(x,y)f(y)\,dy
\uparrow
\int_{\mathbb R^d}G_U(x,y)f(y)\,dy.
\]
On the other hand, by monotone convergence,
\[
\int_{\mathbb R^d}G_{V_n}(x,y)f(y)\,dy
\uparrow
\int_{\mathbb R^d}H(x,y)f(y)\,dy.
\]
Therefore
\[
\int_{\mathbb R^d}H(x,y)f(y)\,dy
=
\int_{\mathbb R^d}G_U(x,y)f(y)\,dy,
\qquad f\in p\mathscr B(\mathbb R^d),
\]
and hence, for any $x\in U$,
\[
H(x,y)=G_U(x,y)
\quad \text{for a.e. } y\in \mathbb R^d.
\]
As above, using the co-fine continuity of the Green kernels, or equivalently
the uniqueness of the excessive version of the potential kernel in
\cite[Theorem VI.1.4]{BG}, we conclude that the equality holds pointwise.
Thus
\[
G_{V_n}(x,y)\uparrow G_U(x,y),
\qquad x,y\in U.
\]
Outside $U\times U$ both sides are equal to zero, by our convention. This
proves the second assertion.

(iii) 
Since $K$  is polar, for every $x\in U\setminus K$,
\[
\tau_{U\setminus K}=\tau_U,
\qquad \mathbb P_x\text{-a.s.}
\]
Consequently,  for every $f\in p\mathscr B(\mathbb R^d)$,
\[
R^{U\setminus K}f(x)
=
\mathbb E_x\int_0^{\tau_{U\setminus K}}f(X_t)\,dt
=
\mathbb E_x\int_0^{\tau_U}f(X_t)\,dt
=
R^Uf(x),
\qquad x\in U\setminus K.
\]
By the Green kernel representation, we obtain
\[
\int_{U\setminus K}G_{U\setminus K}(x,y)f(y)\,dy
=
\int_{U\setminus K}G_U(x,y)f(y)\,dy,
\quad f\in p\mathscr B(\mathbb R^d).
\]
Therefore, for every fixed $x\in U\setminus K$,
\[
G_{U\setminus K}(x,y)=G_U(x,y)
\quad \text{for a.e. }y\in U\setminus K.
\]
Since both sides are co-finely continuous in the second variable on
$U\setminus K$, \eqref{eq.ppp1}, applied to the dual process, yields
\[
G_{U\setminus K}(x,y)=G_U(x,y),
\qquad x,y\in U\setminus K.
\]
This proves (iii).
\end{proof}

\begin{proposition}
\label{green-critical-lemma}
For all $r>0$ and $x,y\in D$,
\[
G_{rD}(rx,ry)=r^{2s-d}G_D(x,y).
\]
\end{proposition}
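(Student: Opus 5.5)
The plan is to derive the identity from the scaling of the process, transferred to the killed resolvent, and then pass from equality of integrals to pointwise equality by fine continuity. Let $(\mathbb P_x)$ be the canonical Markov family. Strict $2s$-stability \eqref{scaling_def} means that under $\mathbb P_x$ the rescaled process $(rX_{t r^{-2s}})_{t\ge0}$ has the law of $X$ under $\mathbb P_{rx}$. At the generator level this is the identity $L u_a = a^{2s}(Lu)(a\,\cdot)$, which passes to the $C_0$-semigroup because $C_c^2$ is a core. It then gives $P_t f(rx)=P_{tr^{-2s}}(f(r\,\cdot))(x)$, and the finite-dimensional distributions agree.

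Next we compare exit times. For a path $\omega$ set $\tilde\omega(t):=r\omega(tr^{-2s})$. Then $X_t(\tilde\omega)\in rD$ if and only if $X_{tr^{-2s}}(\omega)\in D$, so
\[
\tau_{rD}(\tilde\omega)=r^{2s}\tau_D(\omega).
\]
For $f\in p\mathscr B(\mathbb R^d)$, by \eqref{eq.rdef} and the substitution $t=r^{2s}u$, we obtain
\[
R^{rD}f(rx)=\mathbb E_x\int_0^{r^{2s}\tau_D} f\bigl(rX_{tr^{-2s}}\bigr)\,dt
= r^{2s}\,\mathbb E_x\int_0^{\tau_D} f(rX_u)\,du = r^{2s} R^D\bigl(f(r\,\cdot)\bigr)(x),
\]
for $x\in D$. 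Writing both sides through Green kernels and substituting $y'=ry$ on the left gives
\[
\int_D r^dG_{rD}(rx,ry)f(ry)\,dy=r^{2s}\int_D G_D(x,y)f(ry)\,dy .
\]
Since $f$ is arbitrary, this yields $G_{rD}(rx,ry)=r^{2s-d}G_D(x,y)$ for every $x\in D$ and a.e. $y\in D$.

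The final step, which I expect to be the only delicate one, is upgrading this a.e. identity to a pointwise one. I would argue as in Proposition~\ref{prop.green}(i). The function $y\mapsto G_D(x,y)$ is $(P^{*,D}_t)$-excessive, hence co-finely continuous on $D$. By the same scaling applied to the dual process $X^*$, which is also strictly $2s$-stable, $y\mapsto G_{rD}(rx,ry)$ is excessive for the dual killed semigroup on $D$; equivalently, the scaling map sends co-finely open sets to co-finely open sets. It is therefore co-finely continuous as well. Applying \eqref{eq.ppp1} to the dual process then gives the equality for all $y\in D$.

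If one prefers to avoid fine topology, the alternative is to invoke the uniqueness of the excessive Green kernel in \cite[Theorem VI.1.4]{BG}. The kernel $(x,y)\mapsto r^{d-2s}G_{rD}(rx,ry)$ represents $R^D$, it is excessive in each variable for the appropriate semigroups by the scaling argument above, and so it must coincide with $G_D$ everywhere.
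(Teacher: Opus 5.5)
Your proof is correct and follows the paper's argument: the scaling of the process gives $\tau_{rD}=r^{2s}\tau_D$, hence the resolvent identity and an a.e.\ equality of the Green kernels, which is then upgraded to an everywhere equality via co-fine continuity and \eqref{eq.ppp1} applied to the dual process. You go slightly further than the paper by explaining why $y\mapsto G_{rD}(rx,ry)$ is co-finely continuous, namely that it is excessive for the dual killed semigroup on $D$ by the scaling of $X^*$, whereas the paper simply asserts this.
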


\begin{proof}
Since   the related L\'evy process  is strictly $2s$-stable, it satisfies the scaling property
\[
\left(r^{-1}X_{r^{2s} t}\right)_{t\ge 0}\stackrel{d}{=}(X_t)_{t\ge 0}, \qquad r>0,
\]
i.e. for any $x\in\mathbb R^d$,
\[
\mathbb P_{rx}(r^{-1}X_{r^{2s} \cdot} \in B)= \mathbb P_x(X_\cdot\in B),\quad B\in\mathcal B(\Omega),
\]
where   $(\Omega,\rho)$ is  the Skorokhod space of $\mathbb R^d$-valued c\`adl\`ag functions 
on $[0,\infty)$ (this is a Polish space, see \cite[Section 12]{bil}).
Consequently, 
\[
\mathbb P_{rx}(\tau_{rD}\in dt)=\mathbb P_x(r^{2s}\tau_D\in dt).
\]
From the  definition of the Green operator $G_D$, and the above two relations, we infer that
for any $f\in p\mathscr B(\mathbb R^d)$,
\[
\begin{split}
\int_{rD} f(z) G_{rD}(rx,z)\,dz
&=
\mathbb{E}_{rx} \int_0^{\tau_{rD}} f(X_t)\,dt=r^{2s} \mathbb{E}_x \int_0^{\tau_D} f(rX_u)\,du
\\&=
r^{2s} \int_D f(ry) G_D(x,y)\,dy=r^{2s-d} \int_{rD} f(z) G_D(x,z/r)\,dz.
\end{split}
\]
This implies
\[
G_{rD}(rx,z) = r^{2s-d} G_D(x,z/r)
\]
for almost every $z\in rD$.  Both sides are co-finely continuous in the second variable on
$rD$, so, by \eqref{eq.ppp1} applied to the dual process, we get the result.
\end{proof}

\begin{corollary}
\label{green-critical}
Let $0\in D$. Then the  function $G_D(\cdot,\ 0)$ does not belong to the space $L^{{d_s}}_{loc}(D)$.
\end{corollary}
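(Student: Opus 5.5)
The plan is to combine the scaling identity of Proposition~\ref{green-critical-lemma} with the monotonicity of Green functions in the domain from Proposition~\ref{prop.green}(1). The aim is to show that $\int_{B_\rho} G_D(x,0)^{d_s}\,dx=\infty$ for every small ball $B_\rho\subset D$.

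\textbf{Step 1: reduce to a fixed ball.} Fix $\rho>0$ with $\overline{B_{2\rho}}\subset D$. By Proposition~\ref{prop.green}(1), $G_D(\cdot,0)\ge G_{B_{2\rho}}(\cdot,0)$. It therefore suffices to show that $G_{B_{2\rho}}(\cdot,0)^{d_s}$ is not integrable on $B_\rho$.

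\textbf{Step 2: a lower bound on a reference annulus.} Write $U:=B_{2\rho}$ and $A:=\{\rho/2\le|x|\le\rho\}$. Then $G_U(x,0)>0$ for $x\in A$, by the standing strict positivity of the Green function of $D$ (here applied to balls, which are convex; see Remark~\ref{rem.any1}). I want a positive lower bound for $\int_A G_U(x,0)^{d_s}\,dx$. Since $G_U(\cdot,0)$ is lower semicontinuous (it is excessive) and strictly positive, this integral is at least positive, and positivity is all that is needed. If lower semicontinuity is delicate at this point, I would instead use $\int_A G_U(x,0)\,dx=R^U\mathbf 1_A(0)>0$, which follows from strict positivity of $G_U$, and then apply Jensen's inequality to get $\int_A G_U^{d_s}>0$.

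\textbf{Step 3: scaling over dyadic annuli.} For $r=2^{-j}$, Proposition~\ref{green-critical-lemma} gives $G_{rU}(rx,0)=r^{2s-d}G_U(x,0)$, and $rU\subset U$, so $G_U(rx,0)\ge r^{2s-d}G_U(x,0)$. Changing variables on $rA$,
\[
\int_{rA}G_U(y,0)^{d_s}\,dy\ \ge\ r^{d}\,r^{(2s-d)d_s}\int_A G_U(x,0)^{d_s}\,dx=\int_A G_U(x,0)^{d_s}\,dx=:c_0>0,
\]
because $(2s-d)d_s=-d$; this exact cancellation is where the exponent $d_s$ enters. The annuli $2^{-j}A$ are disjoint up to boundaries and are all contained in $B_\rho$. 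Summing over $j\ge0$ therefore gives $\int_{B_\rho}G_U(\cdot,0)^{d_s}=\infty$, and by Step 1 the same holds for $G_D(\cdot,0)$.

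\textbf{Main obstacle.} The crux is Step 2, that is, justifying $c_0>0$ with only the available positivity information. The argument needs $G_U(\cdot,0)$ to be strictly positive on a set of positive measure inside $A$, which follows from strict positivity of $G_U$ on $U\times U$. If that positivity is only known for $D$, I would instead replace $U$ by $D$ itself. The domain monotonicity $G_{rD}\le G_D$ then requires $rD\subset D$, which is not automatic. In that case I would choose a ball $B\subset D$ centred at $0$, argue positivity of $G_B(\cdot,0)$ through the convexity criterion (Remark~\ref{rem.any1}), and then run the same dyadic argument.
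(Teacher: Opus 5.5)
Your proof is correct and uses the same ingredients as the paper's: domain monotonicity to pass to $B_{2\rho}$, positivity of the Green function of a ball (Proposition~\ref{prop.spkg}), and the scaling identity of Proposition~\ref{green-critical-lemma} together with the cancellation $(2s-d)d_s=-d$. The only difference is that you sum the scaled bound over dyadic annuli, which needs only $\int_A G_{B_{2\rho}}(x,0)^{d_s}\,dx>0$ and so no lower semicontinuity, whereas the paper scales each point to a fixed sphere and uses lower semicontinuity there to get the pointwise bound $G_D(x,0)\ge c|x|^{2s-d}$ on a conical sector. One small slip in your fallback: since $L$ need not be symmetric, $\int_A G_U(x,0)\,dx=R^{*,U}\mathbf 1_A(0)$, not $R^U\mathbf 1_A(0)$; your main route does not depend on this.
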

\begin{proof}
Let $r>0$ be such that $B(0,2r)\subset D$. Then using scaling from Lemma~\ref{green-critical-lemma} and Proposition \ref{prop.green}(i) we obtain, for $x\in B(0,r)$,
\[
G_D(x,0)\ge G_{B(0,2r)}(x,0)=\frac{r^{d-2s}}{|x|^{d-2s}}G_{B(0,2r^2/|x|)}\left(\frac{rx}{|x|},0\right)\ge \frac{r^{d-2s}}{|x|^{d-2s}}G_{B(0,2r)}\left(\frac{rx}{|x|},0\right).
\]
Since $G_{B(0,2r)}$ is lower semicontinuous and non-trivial, we have, by scaling again, that there exists an open 
set $S\subset r\mathbb S^{d-1}$ and $c>0$ such that $G_{B(0,2r)}(x,0)\ge c,\, x\in S$.
Consequently, $G_{B(0,2r)}(x,0)\ge c\mathbf1_A|x|^{2s-d}$, where  $A:=\{x\in B(0,r):  rx/|x|\in S\}$.
It is clear that $\mathbf1_A|x|^{2s-d}\notin L^{{d_s}}(B(0,r))$, which completes the proof.
\end{proof}

\begin{remark}
\label{rem.polar}
Since  $G$ is strictly positive and lower semicontinuous, we conclude, by Proposition~\ref{green-critical-lemma}, that
$G(x,y)$ is unbounded in $x$ for fixed $y\in\mathbb R^d$.  
Therefore, by  comments in the first paragraph following \cite[Proposition VI.4.3]{BG},
any singleton is a polar set.  
\end{remark}

\begin{proposition}
\label{prop.spkg}
For every $r>0$,
\[
G_{B(0,r)}(x,z)>0,
\qquad x,z\in B(0,r).
\]
\end{proposition}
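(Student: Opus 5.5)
The plan is to combine the strong Markov property (exit from a small ball around the starting point) with strict positivity of the free Green function $G$, and then use translation invariance and scaling to move the positivity across the ball.

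First I reduce to one radius. By Proposition~\ref{green-critical-lemma}, $G_{B(0,r)}(rx,rz)=r^{2s-d}G_{B(0,1)}(x,z)$, so it suffices to take $r=1$. Write $B:=B(0,1)$.

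Next I obtain positivity near the diagonal. Fix $x\in B$ and let $\rho>0$ satisfy $\overline{B(x,2\rho)}\subset B$. I claim that $G_{B(x,2\rho)}(x,\cdot)>0$ on $B(x,\rho)$. The strong Markov property at $\tau:=\tau_{B(x,2\rho)}$ gives the Hunt-type formula
\[
G(x,z)=G_{B(x,2\rho)}(x,z)+\mathbb E_x G(X_\tau,z).
\]
The second term is controlled by $\sup_{|y-z|\ge\rho}G(y,z)=\sup_{|w|\ge\rho}g(w)$. By scaling, $g(w)=|w|^{2s-d}g(w/|w|)$, so this quantity is bounded provided $g$ is bounded on the unit sphere. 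This is the delicate point, because $g$ need not be locally bounded off the origin (see the remark in the introduction). Rather than compare with $G$ pointwise, I therefore argue by contradiction in integrated form. Suppose $G_{B(x,2\rho)}(x,z_0)=0$ for some $z_0$. Excessiveness of $G_{B(x,2\rho)}(x,\cdot)$ for the dual killed process, together with fine continuity (appendix, \eqref{eq.ppp1}), makes the zero set finely closed. I then use the dual strong Markov identity
\[
G_V(x,z_0)=\mathbb E^*_{z_0}\bigl[G_V(x,X_{\sigma})\bigr]
\]
for hitting times of neighbourhoods of $x$, where $V:=B(x,2\rho)$. Since $G_V(x,\cdot)$ integrates positively near $x$ (indeed $R^V\mathbf 1_{B(x,\epsilon)}(x)>0$, because the process spends positive time near its starting point), the dual process started at $z_0$ would have to avoid every neighbourhood of $x$ before leaving $V$. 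I expect to rule this out using $G>0$ together with the Cone--Conv condition, which says the dual process reaches every open set with positive probability before exiting a larger ball. Scaling and translation invariance make this probability uniform.

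Finally I globalize by a chaining argument. Given $x,z\in B$, I join them by a chain of overlapping small balls inside $B$ and apply the kernel inequality
\[
G_B(x,z)\ge \int_{B(y,\rho)} G_B(x,y')\,\mathbb P_{y'}\ldots
\]
Concretely, I use the first-passage decomposition $G_B(x,z)\ge \mathbb E_x[G_B(X_{T},z);\,T<\tau_B]$, where $T$ is the hitting time of a small ball around an intermediate point. Positivity of the hitting probability before exiting $B$ follows from step two applied to the ball and from lower semicontinuity. Iterating along the finitely many balls of the chain gives $G_B(x,z)>0$.

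The main obstacle is step two, namely positivity on a small ball without any local boundedness of $g$. My first attempt there will be the fine-topology contradiction argument described above.
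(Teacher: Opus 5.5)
\textbf{There is a genuine gap, and it is exactly at the point you flag.}

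By translation invariance and scaling, your Step~2 claim ($G_{B(x,2\rho)}(x,\cdot)>0$ on $B(x,\rho)$) is itself a special case of the proposition. Your fine-topology argument only reduces it to a reachability statement. Here is the reduction made precise. Suppose $G_V(x,z_0)=0$ with $V=B(x,2\rho)$. Take a nonempty open set $A\subset B(x,\epsilon)$ on which $G_V(x,\cdot)>c>0$; it exists because $R^V\mathbf 1_{B(x,\epsilon)}(x)>0$ and the co-excessive function $G_V(x,\cdot)$ is lower semicontinuous. Note that integrated positivity does not force the dual process to avoid \emph{every} neighbourhood of $x$, only such a set $A$. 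Supermedianity of $G_V(x,\cdot)$ along the dual hitting time $T_A$ of $A$, together with co-fine continuity, then gives $0\ge c\,\mathbb P^*_{z_0}(T_A<\tau_V)$.

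So everything rests on one fact: the process started in a ball hits a prescribed small open set before leaving the ball, with positive probability. You ascribe this to the Cone--Conv condition, but that is not what it yields. Its consequence, as used in the paper, is strict positivity of the free Green function $G$, i.e.\ that the \emph{unkilled} process visits every open set with positive probability. Passing from this to ``before exiting a bounded set'' is the whole content of the proposition, and your Step~3 chaining also presupposes it.

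Scaling alone does not bridge this. Suppose $\mathbb P_0(X_{u_0}\in B(e,\varepsilon),\ \sup_{t\le u_0}|X_t|\le M)>0$ with $|e|=1$. The ratio of the excursion bound $M$ to the displacement is scale invariant, and $M$ may be large.

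\textbf{The missing idea}, which is precisely the paper's proof, is to concatenate many small scaled copies of such an event along the segment from $x$ to $z$.
\begin{itemize}
\item Take $e=(z-x)/|z-x|$, $\lambda=|z-x|/n$, and $n$ so large that $\lambda M<\rho$.
\item Strict $2s$-stability turns the event into one of the same probability $p>0$: a step of approximately $\lambda e$ whose excursion is at most $\lambda M$.
\item Independence and stationarity of increments give probability $p^n>0$ to $n$ consecutive such steps.
\item On this event the path stays within $2\rho$ of the segment, hence inside $B(0,r)$ by convexity. At time $t_n$ it lies within $|z-x|\varepsilon<\rho$ of $z$.
\item Right-continuity gives $\int_{B(z,h)}G_{B(0,r)}(x,y)\,dy>0$. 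Excessiveness of $G_{B(0,r)}(x,\cdot)$ for the dual killed process finishes the argument.
\end{itemize}

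With this tube argument in hand (also for the dual process, which is again strictly stable with strictly positive Green function), your Step~2 contradiction and your chaining would go through, though they then become unnecessary. Without it, the proposal does not prove the statement.
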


\begin{proof}
Let $x,z\in B(0,r)$, $x\neq z$ and $h>0$ be such that  $B(z,h)\subset\subset B(0,r)\setminus \{x\}$. Set 
$e:=\frac{z-x}{|z-x|}$ and  $K:=[x,z]$.
Set
\[
\rho:=\frac14[(r-|z|)\wedge (r-|x|)\wedge h].
\]
Choose $\varepsilon \in (0,1/2)$ such that $|z-x|\varepsilon<\rho$.
By the strict positivity of $G(\cdot,\cdot)$,
\[
\int_0^\infty
\mathbb{P}_0\bigl(X_t\in B(e,\varepsilon)\bigr)\,dt
=
\int_{B(e,\varepsilon)}G(w,0)\,dw>0.
\]
Consequently, there exists $u_0>0$ such that
\begin{equation}
\label{eq.endpoint-positive}
\mathbb{P}_0\bigl(X_{u_0}\in B(e,\varepsilon)\bigr)>0.
\end{equation}
Since the paths of $X$ are c\`adl\`ag, they are bounded on
$[0,u_0]$ almost surely. Therefore,
there exists $M>0$ such
that
\begin{equation}
\label{eq.basic-event}
p:=
\mathbb{P}_0\left(
X_{u_0}\in B(e,\varepsilon),\
\sup_{0\leq t\leq u_0}|X_t|\leq M
\right)>0.
\end{equation}
Choose $n\in\mathbb{N}$ so large that $\frac{|z-x|M}{n}<\rho$,
and define
\[
\lambda:=\frac{|z-x|}{n},\quad
T:=\lambda^{2s} u_0,
\quad
t_k:=kT,
\quad k=0,\ldots,n.
\]
By strict $2s$-stability,
\[
\bigl(\lambda^{-1}X_{\lambda^{2s} t}\bigr)_{t\geq0}
\overset{d}{=}
(X_t)_{t\geq0}.
\]
Hence \eqref{eq.basic-event} implies
\begin{equation}
\label{eq.scaled-event}
\mathbb{P}_0\left(
X_{T}\in B(\lambda e,\lambda\varepsilon),\
\sup_{0\leq t\leq T}|X_t|\leq\lambda M
\right)=p.
\end{equation}
For $k=1,\ldots,n$, define
\[
A_k:=
\left\{
X_{t_k}-X_{t_{k-1}}
   \in B(\lambda e,\lambda\varepsilon)
\right\}
\cap
\left\{
\sup_{t_{k-1}\leq t\leq t_k}
|X_t-X_{t_{k-1}}|
\leq\lambda M
\right\},
\]
and put
\[
E_k:=\bigcap_{j=1}^{k}A_j,
\qquad E_0:=\Omega.
\]
By independence and stationarity of increments of the L\'evy process
\begin{equation}
\label{eq.iterated-probability}
\mathbb{P}_x(E_k)=p^k,
\qquad k=1,\ldots,n.
\end{equation}
We next verify that on $E_n$ the process stays inside $B(0,r)$ up to
time $t_n$ and reaches $B(z,h)$ at time $t_n$. For $k=1,\ldots,n$ and  $t\in[t_{k-1},t_{k}]$.
\[
X_{t}-X_0
=
X_t-X_{t_{k-1}}+\sum_{j=1}^{k-1}
\bigl(X_{t_j}-X_{t_{j-1}}\bigr).
\]
Therefore,  on $E_n$, using the definition of $A_k$, we obtain, under the measure $\mathbb P_x$,
\[
\begin{split}
\left|
X_t-[x+(k-1)\lambda e]
\right|&=\left|
X_t-X_0-(k-1)\lambda e
\right|
\\&\leq
|X_t-X_{t_{k-1}}|+
\left|
X_{t_{k-1}}-X_0
-(k-1)\lambda e
\right|\\&\leq
\lambda M+(k-1)\lambda\varepsilon<2\rho.
\end{split}
\]
Thus, by the choice of $\rho$ and $\lambda$,
\[
X_t\in B(0,r),
\quad 0\leq t\leq t_n,
\]
and
\[
|X_{t_n}-z|=|X_{t_n}-[x+n\lambda e]|=|X_{t_n}-X_0-n\lambda e| \le  n\lambda\varepsilon
=
|z-x|\varepsilon
<\rho
\]
on $E_n$, which in turn implies that
\[
X_{t_n}\in B(z,h),\quad t_n<\tau_{B(0,r)}
\]
on $E_n$. Consequently, by right continuity of the paths
\[
\mathbb{E}_x
\int_0^{\tau_{B(0,r)}}\mathbf{1}_{B(z,h)}(X_t)\,dt=\int_{B(z,h)} G_{B(0,r)}(x,y)\,dy>0.
\]
Since this holds  for any $z\in B(0,r)\setminus \{x\}$ and $h>0$ such that $B(z,h)\subset\subset B(0,r)\setminus\{x\}$,
we conclude that $G_{B(0,r)}(x,\cdot)$ is strictly positive a.e. in $B(0,r)$. This combined with the fact that $G_{B(0,r)}(x,\cdot)$
is $(P^{*,B(0,r)}_t)$-excessive  yields the desired result.  
\end{proof}

\begin{remark}
\label{rem.any1}
Observe that the assertion of Proposition \ref{prop.spkg} still holds true when $B(0,r)$ is replaced by any bounded 
convex open set $V$. The only change in the proof  lies in the obvious modification of the definition of $\rho$.
\end{remark}

The following criterion is a generalization of \cite[Theorem 1.3]{ChenHuZhao2025}.

\begin{proposition}[A communication criterion for the positivity of the Green function]
\label{prop.com1}
Suppose that $D$ is $\nu$-chain connected: for every
$x,y\in D$ there exist open sets
\[
U_0,\ldots,U_n\subset\subset D
\]
such that $x\in U_0$, $y\in U_n$, $U_n$ is an open ball, and
\[
\nu(U_{k+1}-z)>0,\qquad z\in U_k,\quad k=0,\ldots,n-1 .
\]
Then
\[
G_D(x,y)>0,\qquad x,y\in D .
\]
\end{proposition}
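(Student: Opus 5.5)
Fix $x,y\in D$ and a chain $U_0,\ldots,U_n$ as in the hypothesis. The plan is to show, by backward induction along the chain, that the killed process started at $x$ reaches the ball $U_n$ before $\tau_D$ with positive probability, and then to combine this with positivity of the Green function of the ball $U_n$ (Proposition~\ref{prop.spkg}, translated) via the strong Markov property. Concretely, if $T:=\inf\{t>0: X_t\in U_n\}$, then the strong Markov property at $T$ together with Proposition~\ref{prop.green}(1) give
\[
G_D(x,y)\ \ge\ \mathbb E_x\bigl[G_{U_n}(X_T,y)\mathbf 1_{\{T<\tau_D,\,X_T\in U_n\}}\bigr].
\]
For a ball $G_{U_n}(\cdot,y)>0$ on $U_n$, so it suffices to prove $\mathbb P_x(T<\tau_D,\ X_T\in U_n)>0$. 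I first state this at the level of kernels, $\int_{U_n}G_D(x,z)\,dz>0$. The pointwise inequality will then follow from the co-fine continuity argument already used in Proposition~\ref{prop.green}, applied to $G_D(x,\cdot)$ through \eqref{eq.ppp1}.

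To get this reachability I argue by induction using the Ikeda--Watanabe formula, as in Lemma~\ref{lm.hr1}. For $z\in U_k$, choose an open $V_k$ with $z\in V_k\subset\subset U_k$; shrinking $U_k$ slightly if needed keeps the hypothesis valid on a neighbourhood. The probability that the process jumps directly from $V_k$ into $U_{k+1}$ before leaving $V_k$ is
\[
\int_{V_k}G_{V_k}(z,w)\,\nu(U_{k+1}-w)\,dw .
\]
This is strictly positive, because $\nu(U_{k+1}-w)>0$ for $w\in V_k$ and $G_{V_k}(z,\cdot)$ is not a.e.\ zero. The latter holds since $\mathbb E_z\tau_{V_k}>0$ by right-continuity of paths; alternatively one may take $V_k$ to be a small ball and invoke Proposition~\ref{prop.spkg}. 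Since $U_{k+1}\subset D$ and $V_k\subset D$, this jump happens before $\tau_D$. Define $\phi_k(z):=\mathbb P_z(\text{reach }U_n\text{ before }\tau_D)$. The strong Markov property at the jump time then gives $\phi_k(z)\ge \mathbb E_z[\phi_{k+1}(X_{\tau_{V_k}});X_{\tau_{V_k}}\in U_{k+1}]>0$, provided $\phi_{k+1}>0$ on $U_{k+1}$. The base case $\phi_n=1$ on $U_n$ is immediate, and starting from $x\in U_0$ the chain closes.

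I expect the main obstacle to be measure-theoretic care rather than any estimate. First, the induction needs $\phi_{k+1}>0$ on all of $U_{k+1}$, which is exactly the inductive claim, so it closes; but the jump-landing distribution must charge $U_{k+1}$ in a set where $\phi_{k+1}>0$, and this is automatic since $\phi_{k+1}>0$ everywhere there. Second, the step from positivity of $\int_{U_n}G_D(x,\cdot)$ to the pointwise bound $G_D(x,y)>0$ for every $y$ has to be done correctly. I plan to use the displayed strong Markov inequality with the strictly positive ball Green function $G_{U_n}(\cdot,y)$, which avoids any a.e.\ issue in $y$. Here one must check that the last-jump landing point lies in the open ball $U_n$ itself, not merely in its closure, and this holds by construction since we land in $U_n$ by a jump.
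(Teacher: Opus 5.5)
Your architecture is the paper's: a jump from $U_k$ into $U_{k+1}$ has positive probability by the L\'evy system (Ikeda--Watanabe) formula, these jumps are chained by the strong Markov property, and Proposition~\ref{prop.spkg} handles the last ball. You run the chain backward through $\phi(z)=\mathbb P_z(\exists\, t<\tau_D:\ X_t\in U_n)$ instead of composing stopping times forward. However, the local step fails as written.

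The integral $\int_{V_k}G_{V_k}(z,w)\,\nu(U_{k+1}-w)\,dw$ is the expected number of jumps from $V_k$ into $U_{k+1}$ during $(0,\tau_{V_k}]$. It is a lower bound for $\mathbb P_z(X_{\tau_{V_k}}\in U_{k+1})$ only if $\overline{V_k}\cap U_{k+1}=\emptyset$. Otherwise it also counts jumps landing in $U_{k+1}\cap V_k$, and these do not end the sojourn in $V_k$.

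The hypothesis does not make consecutive chain sets disjoint. Your choice of an arbitrary $V_k\subset\subset U_k$ containing $z$ allows, e.g., $U_{k+1}\subset V_k$. There the integral is positive but $\mathbb P_z(X_{\tau_{V_k}}\in U_{k+1})=0$, so your inequality for $\phi$ gives nothing.

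A small ball $V_k$ with $\overline{V_k}\cap\overline{U_{k+1}}=\emptyset$ handles $z\notin\overline{U_{k+1}}$, and $z\in U_{k+1}$ is trivial. The case $z\in U_k\cap\partial U_{k+1}$ needs an extra idea. Shrink the target to $A=\{w\in U_{k+1}:|w-z|>\varepsilon\}$; then $\nu(A-z)>0$ for small $\varepsilon$ by continuity from below. Lower semicontinuity of $w\mapsto\nu(A-w)$ ($A$ open, Fatou) keeps $\nu(A-\cdot)>0$ on a ball around $z$ whose closure misses $\overline A$.

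Two other repairs are available. You can do as the paper does and stop at the first passage $X_{t-}\in U_k$, $X_t\in U_{k+1}$ rather than at an exit time. Or note that a positive expected count already means the path visits $U_{k+1}$ before $\tau_D$ with positive probability. By right continuity some rational $q$ then has $\mathbb P_z(q<\tau_D,\ X_q\in U_{k+1})>0$, and the simple Markov property at $q$ closes the induction.

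The same bookkeeping issue affects your last step. The induction yields $\mathbb P_x(T<\tau_D)>0$ for the first entrance time $T$ of $U_n$, not $\mathbb P_x(T<\tau_D,\ X_T\in U_n)>0$. Once you pass to the function $\phi$ you no longer know which stopping time carries the landing jump, and nothing in your argument excludes a first entrance through $\partial U_n$. So the claim that $X_T\in U_n$ ``by construction'' is not justified.

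Replace $T$ by a rational $q$ with $\mathbb P_x(q<\tau_D,\ X_q\in U_n)>0$, which exists by right continuity since $U_n$ is open. Then $(P^D_t)$-excessiveness of $G_D(\cdot,y)$ and Proposition~\ref{prop.green}(1) give $G_D(x,y)\ge\mathbb E_x\bigl[G_{U_n}(X_q,y);\,q<\tau_D,\ X_q\in U_n\bigr]>0$ directly at the prescribed $y$. No co-fine continuity argument is needed; such an argument could not upgrade $\int_{U_n}G_D(x,z)\,dz>0$ to positivity at a given $y$ anyway.
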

\begin{proof}
Indeed, by the L\'evy system formula (see \cite{IW}), for every $t>0$ and $z\in U_k$,
\[
\begin{split}
\mathbb E_z \sum_{0<s\leq t\wedge \tau_{U_k}} {\bf 1}_{\{X_{s-}\in U_k,\; X_s\in U_{k+1}\}}
&=
\mathbb E_z \int_0^{t\wedge \tau_{U_k}} \nu(U_{k+1}-X_s)\,ds .
\end{split}
\]
On the interval $[0,\tau_{U_k})$ we have
$X_s\in U_k$, and therefore, by the assumptions that we made,
\[
\nu(U_{k+1}-X_s)>0,\quad s\in [0,\tau_{U_k}).
\]
Thus the right-hand side of the equation is strictly positive. Consequently,
\begin{equation}
\label{eq.lsc1}
\mathbb P_z\left(\exists\, s<\tau_{U_k}:\; X_{s-}\in U_k,\ X_s\in {U_{k+1}}\right)>0,\quad z\in U_k.
\end{equation}
For $k=0,\ldots,n-1$
put
\[
\sigma_k=\inf\{t>0:\ X_{t-}\in U_k,\ X_t\in U_{k+1}\}\wedge\tau_D,\qquad \eta_k=\tau_{U_k}.
\]
By \eqref{eq.lsc1}, for every $z\in U_k$,
\[
q_k(z):=\mathbb P_z(\sigma_k\leq \tau_{U_k})>0 .
\]
Define $T_{k+1}:=T_k+\sigma_k\circ\theta_{T_k}$, $T_0:=0$.
Then, by the strong Markov property,
\[
\begin{split}
\mathbb P_x(T_k<\tau_D)&=\mathbb P_x(X_{T_{k-1}}\in U_{k-1},T_k<\tau_D)=\mathbb P_x(X_{T_{k-1}}\in U_{k-1},T_{k-1}+\sigma_{k-1}\circ\theta_{T_{k-1}}<\tau_D)
\\&=\mathbb E_x\left[\mathbf1_{\{X_{T_{k-1}}\in U_{k-1}\}}\mathbb P_{X_{T_{k-1}}}(\sigma_{k-1}<\tau_D)\right]\ge 
\mathbb E_x\left[\mathbf1_{\{T_{k-1}<\tau_D\}}q_{k-1}(X_{T_{k-1}})\right].
\end{split}
\]
Since $q_{k-1}$ is strictly positive on $U_{k-1}$, we conclude that $\mathbb P_x(T_{k-1}<\tau_D)>0$ implies $\mathbb P_x(T_{k}<\tau_D)>0$.
Consequently, since $\mathbb P_x(T_{0}<\tau_D)>0$, we conclude that 
\[
\mathbb P_x(T_{n}<\tau_D)>0.
\]
This is precisely the event that the process reaches the last set $U_n$
along the prescribed chain of jumps  before leaving $D$.
By Proposition~\ref{prop.spkg}
\[
G_{U_n}(z,y)>0,\qquad z\in U_n .
\]
By the strong Markov property, we obtain
\[
\begin{split}
G_D(x,y)&\ge\mathbb E_x\left[{\bf 1}_{\{T_n<\tau_D\}}\, G_{U_n}(X_{T_n},y)\right]>0.
\end{split}
\]
\end{proof}

\subsection*{Acknowledgements}
{\small Kamil Dunst acknowledges support from the Polish National Science Centre under Grant No. 2023/49/N/ST1/03435. 
Tomasz Klimsiak acknowledges support from the Polish National Science Centre under Grant No. 2022/45/B/ST1/01095.}

\end{document}